\DeclareMathOperator\pf{pf}
\DeclareMathOperator\Pf{Pf}
\DeclareMathOperator\pk{pk}
\DeclareMathOperator\Pk{Pk}
\DeclareMathOperator\Av{Av}
\DeclareMathOperator\ev{ev}
\DeclareMathOperator\pev{pev}
\newtheorem{theorem}{Theorem}[section]
\newtheorem{lemma}[theorem]{Lemma}
\newtheorem{corollary}[theorem]{Corollary}
\newtheorem{claim}[theorem]{Claim}
\newcounter{propcounter}
\title{Results on pattern avoidance in parking functions}
\author{Jun Yan\thanks{Mathematics Institute, University of Warwick, UK. Email: {\tt jun.yan@warwick.ac.uk}. Supported by the Warwick Mathematics Institute CDT and funding from the UK EPSRC (Grant number: EP/W523793/1).}}
\date{}
\begin{document}

\maketitle

\begin{abstract}
In this paper, we mainly study two notions of pattern avoidance in parking functions. First, for any collection of length 3 patterns, we compute the number of parking functions of size $n$ that avoid them under the first notion. This is motivated by the recent work of Adeniran and Pudwell, who obtained analogous results using a second notion of pattern avoidance. Then, we provide new purely bijective proofs for two of their results, and improve the formula of another one. Finally, we apply similar enumeration techniques to the work of Novelli and Thibon on certain Hopf algebras of generalised parking functions, and compute their graded dimensions. 
\end{abstract}

\section{Introduction}
Let $f:[n]\to[n]$ be a function. Suppose that $n$ cars enter a one-way parking lot with $n$ parking spots and attempt to park one by one. For every $i\in[n]$, the $i$-th car prefers the $f(i)$-th parking spot, and it drives straight there. If the $f(i)$-th spot is still available, it parks there, otherwise it continues down the parking lot and parks at the first available spot, or exits without parking if no such spot exists. We say that $f:[n]\to[n]$ is a \textit{parking function} of \textit{size} $n$ if all $n$ cars can park successfully. See Figure \ref{fig:parkingfunctionexample} for an example.

It is well known and easy to show that $f:[n]\to[n]$ is a parking function if and only if at least $i$ cars prefer the first $i$ parking spots for every $i\in[n]$. In other words, if and only if the following condition holds:
\stepcounter{propcounter}
\begin{enumerate}[label = \textbf{\Alph{propcounter}}]
\item\label{parkingcondition}  for every $i\in [n]$, $|\{j\mid f(j)\leq i\}|\geq i$.
\end{enumerate}

The study of pattern avoidance in parking functions is motivated by the notion of pattern avoidance in permutations. For $m\leq n$ and $\sigma\in S_m$, $\pi\in S_n$, we say that $\pi$ \textit{contains} $\sigma$ \textit{as a pattern} if there exists $1\leq i_1<\cdots<i_m\leq n$, such that $\pi(i_a)<\pi(i_b)$ if and only if $\sigma(a)<\sigma(b)$ for all $a,b\in[m]$, and we say $\pi$ \textit{avoids} $\sigma$ otherwise. For any collection $\sigma_1,\cdots,\sigma_k$ of permutations, we denote by $\Av_n(\sigma_1,\cdots,\sigma_k)$ the set of all permutations in $S_n$ containing none of $\sigma_1,\cdots,\sigma_k$ as a pattern. Depending on how one associates a permutation to each parking function, there are several possible notions of pattern avoidance in parking functions, and we study two of these in this paper. 

The first such notion looks at the final parking positions. For a parking function $f:[n]\to[n]$, the \textit{parking permutation} associated to $f$ is defined to be the permutation $\rho_f\in S_n$ satisfying that the $i$-th spot in the parking lot is occupied by the $\rho_f(i)$-th car. See Figure \ref{fig:parkingfunctionexample} for an example. Note that different parking functions could have the same associated parking permutation. 

For a collection $\sigma_1,\cdots,\sigma_k$ of permutations, let $\Pk_n(\sigma_1,\cdots,\sigma_k)$ be the set of parking functions $f:[n]\to[n]$ such that its associated parking permutation $\rho_f$ contains none of $\sigma_1,\cdots,\sigma_k$ as a pattern, or equivalently, $\Pk_n(\sigma_1,\cdots,\sigma_k)=\{f:[n]\to[n]\mid f\text{ is a parking function and }\rho_f\in\Av_n(\sigma_1,\cdots,\sigma_k)\}$. Let $\pk_n(\sigma_1,\cdots,\sigma_k)=|\Pf_n(\sigma_1,\cdots,\sigma_k)|$. 

In Section \ref{computepk} we systematically compute the values of $\pk_n(\sigma_1,\cdots,\sigma_k)$ for all collections $\sigma_1,\cdots,\sigma_k$ of permutations in $S_3$. Moreover, the formulas we prove are all explicit and non-recursive, except for $\pk_n(\sigma)$ when $\sigma\in\{132, 231, 312, 321\}$, which are more difficult. This section is motivated by the work of Adeniran and Pudwell in \cite{AP}, where they obtained analogous results using a different notion of pattern avoidance in parking functions that we define now. 

This second notion is introduced by Qiu and Remmel \cite{Q,QR}, and is motivated by their work on pattern avoidance in ordered set partitions. A parking function $f:[n]\to[n]$ is represented in \textit{block notation} by a sequence of $n$ sets $(B_1,\cdots, B_n)$, each called a \textit{block}, where $B_i=f^{-1}(i)$ for all $i\in[n]$. This block notation of parking function is introduced by Adeniran and Pudwell \cite{AP}, and is a simpler form of the labelled Catalan path notation introduced by Garsia and Haiman \cite{GH}. See Figure \ref{fig:parkingfunctionexample} for an example. Note that by \ref{parkingcondition}, a sequence of $n$ disjoint subsets $(B_1,\cdots, B_n)$ of $[n]$ satisfying $\cup_{j=1}^nB_i=[n]$ is a parking function represented in block notation if and only if the following holds:
\stepcounter{propcounter}
\begin{enumerate}[label = \textbf{\Alph{propcounter}}]
\item\label{blockcondition} for every $i\in[n]$, $|\cup_{j=1}^iB_i|\geq i$.
\end{enumerate}

Given a parking function $f:[n]\to[n]$ represented in block notation by $(B_1,\cdots, B_n)$, write down the elements in $B_1,\cdots,B_n$ in this order, where for each $B_i$ with size at least 2, we write the elements in $B_i$ down in increasing order. In this way, we obtain a permutation $\pi_f\in S_n$, where $\pi_f(i)$ is the $i$-th element written down, and we call $\pi_f$ the \textit{block permutation} associated to $f$. See Figure \ref{fig:parkingfunctionexample} for an example. Note that again, the same permutation could be the block permutation associated to several distinct parking functions. 

For a collection $\sigma_1,\cdots,\sigma_k$ of permutations, let $\Pf_n(\sigma_1,\cdots,\sigma_k)$ be the set of parking function $f:[n]\to[n]$ such that its associated block permutation $\pi_f$ contains none of $\sigma_1,\cdots,\sigma_k$ as a pattern, or equivalently, $\Pf_n(\sigma_1,\cdots,\sigma_k)=\{f:[n]\to[n]\mid f\text{ is a parking function and }\pi_f\in\Av_n(\sigma_1,\cdots,\sigma_k)\}$. Let $\pf_n(\sigma_1,\cdots,\sigma_k)=|\Pf_n(\sigma_1,\cdots,\sigma_k)|$.

%We say that a parking function $f:[n]\to[n]$ \textit{contains} $\sigma$ \textit{as a block pattern} if the block permutation $\pi_f\in S_n$ associated to $f$ does, and we say $f$ \textit{avoids} $\sigma$ \textit{as a block pattern} otherwise. 

As an example, $\pf_n(12)=1$ as the only parking function in $\Pf_n(12)$ is the one with block notation $(\{n\},\{n-1\},\cdots,\{1\})$. It is also relatively easy to see that $\pf_n(21)=C_n$, the $n$-th Catalan number. Indeed, the block permutation associated to any parking function in $\Pf_n(21)$ must be the identity, and given a parking function in $\Pf_n(21)$ with block notation $B_1,B_2,\cdots,B_n$, if from left to right we draw an up-step for every element encountered in the blocks, and draw a down-step every time we reach the end of a set $B_i$, then condition \ref{blockcondition} implies that this gives a bijection with the set $\mathcal{C}_n$ of Catalan paths of length $2n$. Since parking functions in $\Pf_n(21)$ are exactly the size $n$ parking functions that are increasing, this also gives a bijection between size $n$ parking functions and the Catalan paths of length $2n$, which we will use later. 

\begin{figure}[t]
    \centering
    \begin{tabular}{|c|c|}
\hline\rule{0pt}{4ex}  A parking function $f:[7]\to[7]$  &  \begin{tabular}{|c||c|c|c|c|c|c|c|c|}
 \hline$i$ & 1 & 2 & 3 & 4 & 5 & 6 & 7\\\hline
 $f(i)$ & 4 & 4 & 6 & 4 & 2 & 2 & 1\\\hline 
\end{tabular}\\[2ex]\hline
\rule{0pt}{3ex}  The parking positions of $f$  & \begin{tabular}{|c|c|c|c|c|c|c|c|}
 \hline 7 & 5 & 6 & 1 & 2 & 3 & 4\\\hline
\end{tabular}  \\[1ex]\hline
\rule{0pt}{3ex}   The parking permutation $\rho_f$ associated to $f$  & $\rho_f=7561234$ \\[1ex]\hline
\rule{0pt}{3ex}  $f$ in block notation   & $(\{7\},\{5,6\},\emptyset,\{1,2,4\},\emptyset,\{3\},\emptyset)$ \\[1ex]\hline
\rule{0pt}{3ex}   The block permutation $\pi_f$ associated to $f$  & $\pi_f=7561243$ \\[1ex]\hline
\end{tabular}
    \caption{An example parking function $f:[7]\to[7]$ illustrating relevant concepts. Note that $\rho_f$ contains 312 as a pattern, but avoids 132, while $\pi_f$ contains 132 as a pattern, but avoids 213.}\label{fig:parkingfunctionexample}
\end{figure}

The first non-trivial result under this notion of pattern avoidance is due to Qiu and Remmel \cite{Q}, who computed the values of $\pf_n(123)$. Adeniran and Pudwell \cite{AP} systematically computed all values of $\pf_n(\sigma_1,\cdots,\sigma_k)$, where $\sigma_1,\cdots,\sigma_k$ is any collection of at least two permutations in $S_3$. The values of $\pf_n(\sigma)$ for $\sigma\in S_3\setminus\{123\}$ remains open. Two of the results proven by Adeniran and Pudwell \cite{AP} are as follows. 

\begin{theorem}\label{thm:main1}
$\pf_n(123,132)$ is equal to the number of ordered rooted trees with $n+1$ edges and odd root degrees.
\end{theorem}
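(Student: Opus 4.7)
The plan is to derive a closed-form generating function for $\pf_n(123,132)$ via a structural recursion on the block permutation and identify it with the generating function for ordered rooted trees with odd root degree.

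First I would establish the canonical form of a permutation $\sigma\in\Av_n(123,132)$: letting $m$ be the position of $n$ in $\sigma$, the permutation must take the form $\sigma=(n-1)(n-2)\cdots(n-m+1)\cdot n\cdot\tau$, where $\tau\in\Av_{n-m}(123,132)$ is a permutation of $\{1,\ldots,n-m\}$. This is because $123$-avoidance forces the values before $n$ to be strictly decreasing, and $132$-avoidance forces every value before $n$ to exceed every value after $n$. Since each block of $f$ contributes a consecutive increasing run to $\pi_f$, the descents in the prefix force positions $1,\ldots,m-2$ into distinct singleton blocks; the only remaining choice is whether positions $m-1$ and $m$, which form the ascent $(n-m+1)<n$, are merged into a single block $\{n-m+1,n\}$ of size two (\emph{Case A}) or kept as separate singletons (\emph{Case B}). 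Positions $m+1,\ldots,n$ carry the block decomposition coming from $\tau$.

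Counting the valid placements of these non-empty blocks into the slots $B_1,\ldots,B_n$ subject to the parking condition \ref{blockcondition}, one finds that Case B forces the $m$ singletons into slots $1,\ldots,m$ and leaves the $\tau$-blocks to form a standard parking function of size $n-m$ whose block permutation is $\tau$, contributing $\pf_{n-m}(123,132)$ in total; while Case A (accounting for the one unit of slack introduced by the size-two block) gives a \emph{generalized} parking function of $n-m$ cars in $n-m+1$ slots with block permutation in $\Av(123,132)$. Introducing the auxiliary counts $\tilde a_n^{(k)}$ for generalized parking functions of $n$ cars in $n+k$ slots whose block permutation avoids $\{123,132\}$, and assembling everything into the bivariate series $F(x,y)=\sum_{n,k\geq 0}\tilde a_n^{(k)}x^ny^k$, the recursion becomes a linear functional equation for $F$. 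Solving it and specializing to $y=0$ gives $A(x):=\sum_{n\geq 0}\pf_n(123,132)\,x^n=T^2(x)/(1+xT(x))$, where $T(x)=\sum_{n\geq 0}C_nx^n$ is the Catalan generating function.

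Finally, using the Catalan identity $xT^2=T-1$, one computes
\[
xA(x)=\frac{xT^2}{1+xT}=\frac{T-1}{1+xT}=\sum_{k\geq 0}(xT)^{2k+1},
\]
where the last equality follows from $1-xT=1/T$. The right-hand side is precisely the generating function for ordered rooted trees counted by number of edges and restricted to odd root degree, since a tree of root degree $r$ decomposes as an ordered $r$-tuple of subtrees, each contributing a factor $xT$. Extracting the coefficient of $x^{n+1}$ on both sides proves the theorem. The main obstacle will be solving the functional equation for $F$: the generalized counts $\tilde a_n^{(k)}$ do not visibly reduce to the case $k=0$ alone, so the full bivariate recursion must be set up and solved before specializing $y=0$.
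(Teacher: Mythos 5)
Your overall strategy is sound and your endgame is correct, but the proof as written has a genuine gap exactly where you flag it, and it is also a fundamentally different kind of proof from the one in the paper. On the route: the paper's proof of this theorem is purely bijective --- it decomposes $f\in\Pf_n(123,132)$ into ``clusters'' (extend, branch and jump clusters) and builds the tree recursively by attaching paths or pairs of branches, precisely because Adeniran and Pudwell had already proved the identity algebraically and asked for a bijection. Your proposal is a generating-function argument in the spirit of their original proof. Your preliminary analysis agrees with the paper's: $132$-avoidance puts everything before $n$ above everything after it, $123$-avoidance makes the prefix decreasing and bounds block sizes by $2$, and the only choice at the top level is whether $n-m+1$ and $n$ share a block. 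Your reduction of Case B to $\pf_{n-m}(123,132)$ and of Case A to a generalized parking function of $n-m$ cars in $n-m+1$ slots is correct (one can check $a_2=a_1+a_0+\tilde a_0^{(1)}=3$ and $a_3=a_2+a_1+a_0+\tilde a_1^{(1)}+\tilde a_0^{(1)}=8$), and the closing computation is valid: from $xT^2=T-1$ and $1-xT=1/T$ one does get $xT^2/(1+xT)=xT/((1-xT)(1+xT))=\sum_{k\ge0}(xT)^{2k+1}$, which is the edge-generating function for ordered rooted trees of odd root degree.

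The gap is that the heart of the argument --- setting up and solving the recursion for the auxiliary counts $\tilde a_n^{(k)}$ and deducing $A(x)=T^2/(1+xT)$ --- is asserted rather than carried out, and it is not a routine step. Once the initial slack $k$ is positive, empty slots may be interleaved among the prefix blocks, so the recursion for $\tilde a_n^{(k)}$ involves sums of the form $\sum_j\binom{m+j-1}{j}\,\tilde a_{n-m}^{(k-j)}$ (choosing which of the $m$ gaps receive the $j$ empty slots, with the unused slack passed to the suffix), together with a separate bookkeeping for Case A where the size-two block raises the slack by one partway through the prefix. This does appear to assemble into a solvable functional equation for $F(x,y)$ (the $j$-sum is a convolution in $y$, contributing factors $x^m(1-y)^{-m}$), and the target formula $A(x)=T^2/(1+xT)$ is certainly correct since it reproduces $1,1,3,8,24,\dots$; but until the equation is actually written down, solved, and specialized at $y=0$, the proof is incomplete at its only nontrivial computational step. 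If you want to avoid this machinery altogether, the paper's cluster decomposition shows how to convert the same structural analysis directly into a bijection with the trees, at the cost of a much more delicate case analysis.
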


\begin{theorem}\label{thm:main2}
$\pf_n(123,213)=C_{n+1}-C_n$, which is equal to the number of ordered rooted trees with $n+1$ edges and root degrees at least 2. 
\end{theorem}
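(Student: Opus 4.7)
I would prove Theorem~\ref{thm:main2} by combining a structural analysis of the blocks of $f\in\Pf_n(123, 213)$ with either direct enumeration or a bijection onto a Catalan-combinatorial family of cardinality $C_{n+1}-C_n$.

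First, I would establish two structural observations about any $f\in\Pf_n(123,213)$:
(i)~every block of $f$ has size at most~$2$, since three elements in a single block would appear in $\pi_f$ as an increasing triple, forming a $123$ pattern;
(ii)~for any two size-$2$ blocks $B_i=\{a_1,b_1\}$ and $B_j=\{a_2,b_2\}$ with $i<j$ and $a_s<b_s$, inspection of the four triples among $\{a_1,b_1,a_2,b_2\}$ in $\pi_f$ shows that avoiding both $123$ and $213$ forces the chain $b_1>a_1>b_2>a_2$. Iterating (ii), if $f$ has $k$ size-$2$ blocks then their $2k$ elements form a single alternating chain $b_1>a_1>\cdots>b_k>a_k$; consequently the contents of the size-$2$ blocks are determined by the underlying $2k$-element subset of $[n]$.

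My plan from here is twofold. Route~(i): partition $\Pf_n(123, 213)$ by $k$ (the number of size-$2$ blocks). For each $k$, the data of $f$ consists of the $2k$-element chain, the positions of the size-$2$, singleton, and empty blocks (subject to the parking condition~\ref{blockcondition}), and the placement of the remaining singleton elements so that $\pi_f$ avoids $123$ and $213$. The last count is controlled by the fact $|\Av_m(123,213)|=2^{m-1}$ for $m\geq 1$, which itself follows from the characterisation that $\pi$ avoids both $123$ and $213$ iff for each $k\geq 3$ at most one of $\pi(1),\ldots,\pi(k-1)$ is less than $\pi(k)$. Summing the contributions over all configurations should match $C_{n+1}-C_n$. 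Route~(ii): build a direct bijection between $\Pf_n(123,213)$ and the set of ordered pairs $(P_1, P_2)$ of Dyck paths with $P_1$ non-empty and $|P_1|+|P_2|=2n$, whose total number is
\[
\sum_{k=1}^{n} C_k C_{n-k}=C_{n+1}-C_n
\]
by the first-return decomposition of Dyck paths of length $2(n+1)$ not starting with $UD$.

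The hard part of Route~(i) will be the multi-parameter bookkeeping: the interleaving of size-$2$, singleton, and empty blocks is constrained jointly by the parking condition and the $123, 213$-avoidance of $\pi_f$, so the resulting sums involve several interacting indices. Route~(ii) bypasses this but requires identifying a canonical splitting position in $\pi_f$ (for example, immediately after the first occurrence of the element~$1$, or at the first return to zero in a Dyck-path encoding of $f$) that cleanly separates $f$ into two independent parking-function-like pieces corresponding to $P_1$ and $P_2$; verifying that this split is well-defined and invertible is the crux of this approach.
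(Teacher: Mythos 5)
Your structural observations are sound: blocks have size at most $2$, and the forced chain $a_2<b_2<a_1<b_1$ between consecutive size-$2$ blocks is correct and consistent with the paper's own preliminary analysis (which shows that if $\pi_f(1)=k+1$ then the only possible size-$2$ block at the front is $\{k+1,n\}$, with the remaining leading entries $n-1,\dots,k+2$ in singletons). The identity $\sum_{k=1}^{n}C_kC_{n-k}=C_{n+1}-C_n$ and the count $|\Av_m(123,213)|=2^{m-1}$ are also correct. But what you have written is a plan, not a proof: neither route is executed, and in both cases the part you defer is exactly where the difficulty lives. For Route (i), the interaction between the parking condition \ref{blockcondition}, the bracket-matching of size-$2$ blocks with empty blocks, and the $123,213$-avoidance of $\pi_f$ is the whole problem; no sum is actually evaluated. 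For Route (ii), the proposed splitting points are unlikely to work as stated: because each size-$2$ block is matched with an empty block that can occur arbitrarily far to its right (the matching is a balanced-bracket structure), any prefix of $f$ cut at a ``first return'' or ``after the element $1$'' will in general contain size-$2$ blocks whose partner empty blocks lie in the suffix, so the two pieces are not independent parking-function-like objects. You would need to either prove such interleaving cannot happen at your chosen cut (it can) or design a more elaborate decomposition.

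For comparison, the paper does not split $f$ into two Dyck paths at all. It partitions $f$ into a sequence of \emph{clusters} (closed clusters carrying a parameter, and open clusters whose empty block lies inside a later-removed cluster), and builds a recursive bijection $\Phi_n$ onto ordered rooted trees with $n+1$ edges and root degree at least $2$, maintaining an inductive invariant (condition \ref{condition} on full right subtrees) precisely to control where the dangling empty blocks of open clusters land. The nonlocal bracket structure that your Route (ii) glosses over is the reason that invariant is needed. If you want to salvage your approach, Route (i) restricted to a generating-function computation over the cluster types would be closer to feasible than a two-Dyck-path split, but as it stands the proposal has a genuine gap at its central step.
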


The proofs of these two results in \cite{AP} are both algebraic, specifically by setting up and solving recurrence relations using generating function or computer-assisted induction. In view of the simple combinatorial interpretation of the numbers $\pf_n(123,132)$ and $\pf_n(123,213)$, Adeniran and Pudwell \cite{AP} posed the question of finding purely bijective proofs of these two results. We answer this question by presenting bijective proofs of these two results in Section \ref{123132} and \ref{123213}, respectively.

Finally, we apply similar algebraic techniques as the ones employed in the more difficult cases in Section \ref{computepk} to some related enumeration problems on parking functions. We first improve a result of Adeniran and Pudwell in \cite{AP} by proving a more explicit formula for $\pf_n(312, 321)$ in Section \ref{compute312321}. Then, in Section \ref{related}, building on the work of Novelli and Thibon in \cite{NT}, we provide several new formulas on the graded dimensions of certain Hopf algebras of generalised parking functions, which is also the number of congruence classes of generalised parking functions under certain congruence relations. In the process, we confirm several conjectures. 

\section{Pattern avoidance in parking permutations}\label{computepk}
In this section, analogous to the work by Adeniran and Pudwell \cite{AP} on enumerating parking functions whose block permutations avoid certain length 3 patterns, we systematically compute $\pk_n(\sigma_1,\cdots,\sigma_k)$, the number of parking functions whose parking permutations avoid a certain collection $\sigma_1,\cdots,\sigma_k$ of permutations in $S_3$. Under this notion of pattern avoidance, determining $\pk_n(\sigma_1,\cdots,\sigma_k)$ is in general slightly easier due to the following lemma, a proof of which can be found in \cite{CHJKRSV}, and its immediate corollary. 

\begin{lemma}
For any $\rho\in S_n$ and $i\in[n]$, let $\ell(i,\rho)=\max\{\ell\mid\rho(j)\leq\rho(i)\text{ for all }i-\ell+1\leq j\leq i\}$, and let $\ell(\rho)=\prod_{i=1}^n\ell(i,\rho)$. Then, the number of parking function $f:[n]\to[n]$ with $\rho_f=\rho$ is $\ell(\rho)$.
\end{lemma}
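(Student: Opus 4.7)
The plan is to set up a bijection between parking functions $f:[n]\to[n]$ with $\rho_f=\rho$ and tuples $(a_1,\ldots,a_n)$ satisfying $a_i\in\{i-\ell(i,\rho)+1,\ldots,i\}$, given explicitly by $f(\rho(i))=a_i$. The desired count then follows immediately as $\prod_{i=1}^n\ell(i,\rho)=\ell(\rho)$, since the $a_i$'s may be chosen independently.

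For the forward direction, suppose $\rho_f=\rho$ and set $a_i=f(\rho(i))$, the preferred spot of the car that ends up in spot $i$. Clearly $a_i\leq i$, since otherwise car $\rho(i)$ would not even reach spot $i$. Moreover, for each $j$ with $a_i\leq j\leq i-1$, when car $\rho(i)$ arrived the spot $j$ was already occupied (otherwise car $\rho(i)$ would have stopped there), and the occupant must be a car with a smaller label, so $\rho(j)<\rho(i)$. Combining this with $\rho(i)\leq\rho(i)$, the defining maximum of $\ell(i,\rho)$ yields $\ell(i,\rho)\geq i-a_i+1$, i.e.\ $a_i\geq i-\ell(i,\rho)+1$.

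For the reverse direction, given any tuple $(a_i)$ in the prescribed ranges, I would define $f$ by $f(\rho(i))=a_i$ and then simulate the parking process, proving by induction on $c\in[n]$ the invariant that immediately before car $c$ arrives, the occupied spots are exactly $\{j:\rho(j)<c\}$. Writing $i=\rho^{-1}(c)$, the inductive hypothesis implies that spot $i$ itself is empty while every spot $j$ with $a_i\leq j\leq i-1$ is occupied: indeed the definition of $\ell(i,\rho)$ forces $\rho(j)\leq\rho(i)=c$, and $\rho(j)\neq c$ since $j\neq i$. Thus car $c$, preferring $a_i$, is pushed forward along occupied spots and parks at the first empty one, which is spot $i$, extending the invariant and confirming $\rho_f=\rho$.

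The main obstacle is the inductive step above: one must rule out any ``early'' empty spot in $\{a_i,\ldots,i-1\}$ that would trap car $c$ before it reaches spot $i$, and this is precisely what the maximality in the definition of $\ell(i,\rho)$ guarantees. Once the bijection is established, the product formula is immediate from independence of the coordinate choices.
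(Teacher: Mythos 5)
Your proof is correct and complete. Note that the paper itself does not prove this lemma --- it defers to the cited reference [CHJKRSV] --- so there is no internal proof to compare against; your argument is the natural bijective one: encoding a parking function with $\rho_f=\rho$ by the preferences $a_i=f(\rho(i))$ of the car landing in spot $i$, showing $a_i$ ranges exactly over the window $\{i-\ell(i,\rho)+1,\dots,i\}$, and verifying via the invariant on occupied spots that any such choice of preferences reproduces $\rho$. Both directions are handled correctly (the forward direction uses that the max is at least $i-a_i+1$ because the window condition holds, and the reverse uses that the window condition holds at $\ell=\ell(i,\rho)$ to rule out an early empty spot), and the product formula follows from the independence of the coordinates.
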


\begin{corollary}\label{generalpkformula}
$$\pk_n(\sigma_1,\cdots,\sigma_k)=\sum_{\rho\in\Av_n(\sigma_1,\cdots,\sigma_k)}\ell(\rho)=\sum_{\rho\in\Av_n(\sigma_1,\cdots,\sigma_k)}\prod_{i=1}^n\ell(i,\rho).$$
\end{corollary}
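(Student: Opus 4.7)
The plan is to obtain the corollary as a direct partition argument on top of the preceding lemma, which is the only substantive ingredient. My approach is the following.

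First, I would unpack the definition: by construction, $\Pk_n(\sigma_1,\cdots,\sigma_k)$ is exactly the set of parking functions $f:[n]\to[n]$ whose associated parking permutation $\rho_f$ lies in $\Av_n(\sigma_1,\cdots,\sigma_k)$. Since every parking function has a unique associated parking permutation, the map $f\mapsto \rho_f$ gives a partition
\[
\Pk_n(\sigma_1,\cdots,\sigma_k)\;=\;\bigsqcup_{\rho\in\Av_n(\sigma_1,\cdots,\sigma_k)}\{f:[n]\to[n]\mid f\text{ is a parking function and }\rho_f=\rho\}.
\]

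Second, I would take cardinalities of both sides and apply the preceding lemma to each fibre: the number of parking functions $f$ with $\rho_f=\rho$ is exactly $\ell(\rho)$. Summing over $\rho\in\Av_n(\sigma_1,\cdots,\sigma_k)$ immediately yields
\[
\pk_n(\sigma_1,\cdots,\sigma_k)\;=\;\sum_{\rho\in\Av_n(\sigma_1,\cdots,\sigma_k)}\ell(\rho),
\]
which is the first equality. The second equality is then just the definition $\ell(\rho)=\prod_{i=1}^n\ell(i,\rho)$ substituted into the sum.

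There is no real obstacle here: the content of the statement is already packaged in the lemma, and the corollary is just the observation that the set $\Pk_n(\sigma_1,\cdots,\sigma_k)$ fibres over $\Av_n(\sigma_1,\cdots,\sigma_k)$ via $f\mapsto\rho_f$. The only thing to be slightly careful about is to emphasise that the disjoint union above covers all of $\Pk_n(\sigma_1,\cdots,\sigma_k)$ — which is immediate from the equivalent definition of $\Pk_n$ as $\{f\mid \rho_f\in\Av_n(\sigma_1,\cdots,\sigma_k)\}$ recorded in the paper.
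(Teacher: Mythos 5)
Your proof is correct and is exactly the argument the paper has in mind: the paper presents this corollary as an immediate consequence of the preceding lemma, and your partition of $\Pk_n(\sigma_1,\cdots,\sigma_k)$ into fibres of the map $f\mapsto\rho_f$, each of size $\ell(\rho)$, is the intended justification. Nothing is missing.
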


Most of the proofs below directly apply Corollary \ref{generalpkformula} to the descriptions of permutation in $\Av_n(\sigma_1,\cdots,\sigma_k)$ given by Adeniran and Pudwell \cite{AP}. As such, we now introduce several notations they used to describe permutations. We denote the identity permutation by $I_n$, and denote the permutation $\sigma\in S_n$ satisfying $\sigma(i)=n+1-i$ for all $i\in[n]$ by $J_n$. For permutations $\sigma\in S_n, \tau\in S_m$, we define the \textit{direct sum} $\sigma\oplus\tau$ and \textit{skew sum} $\sigma\ominus\tau$ to be the permutations in $S_{n+m}$ given as follows.
$$(\sigma\oplus\tau)_i=\begin{cases}
    \sigma_i, &1\leq i\leq n,\\
    \tau_{i-n}+n, &n+1\leq i\leq n+m.
\end{cases}$$
$$(\sigma\ominus\tau)_i=\begin{cases}
    \sigma_i+m, &1\leq i\leq n,\\
    \tau_{i-n}, &n+1\leq i\leq n+m.
\end{cases}$$
For $S\subset[n]$, we also define $I_S$ to be the elements of $S$ listed in increasing order, and $J_S$ to be the elements of $S$ listed in decreasing order. If several lists of elements partitioning $[n]$ are written directly next to one another, then that denotes the permutation in $S_n$ obtained by concatenating these lists. For example, $I_{\{2,5\}}6J_{\{1,3,4\}}$ denotes the permutation $\sigma=256431\in S_6$.

As two examples, by Corollary \ref{generalpkformula}, $\pk_n(12)=1$ as $\Av_n(12)=\{J_n\}$ and $\ell(J_n)=\prod_{i=1}^n\ell(i,J_n)=\prod_{i=1}^n1=1$, while $\pk_n(21)=n!$ as $\Av_n(21)=\{I_n\}$ and $\ell(I_n)=\prod_{i=1}^n\ell(i,I_n)=\prod_{i=1}^ni=n!$

Note that by the Erd\H{o}s-Szekeres Theorem, any permutation containing neither 123 nor 321 as a pattern must have length at most 4, so in the computations below we will only consider collections $\sigma_1,\cdots,\sigma_k$ that do not contain 123 and 321 together. We also follow the convention that $\Av_0(\sigma_1,\cdots,\sigma_k)$ has size 1 and contains the unique empty permutation, and $\pk_0(\sigma_1,\cdots,\sigma_k)=1$. 

Finally, in some of the more difficult cases below that require the use of generating functions, we use $[x^n]F(x)$ to denote the coefficient of the term $x^n$ in the formal power series $F(x)$.

\subsection{Avoiding five patterns}
\begin{center}
\begin{tabular}
{|c|c|c|c|}
\hline  Patterns $\sigma_1,\sigma_2,\sigma_3,\sigma_4,\sigma_5$  & $\pk_n(\sigma_1,\sigma_2,\sigma_3,\sigma_4,\sigma_5)$, $1\leq n\leq8$ & OEIS & Result \\\hline
  123, 132, 213, 231, 312   & 1, 3, 1, 1, 1, 1, 1, 1 & A000012 ($n\geq3$) & Theorem \ref{2.1}\\\hline

  132, 213, 231, 312, 321   & 1, 3, 6, 24, 120, 720, 5040, 40320 & A338112 & Theorem \ref{2.2}\\\hline
\end{tabular}
\end{center}

\begin{theorem}\label{2.1}
$$\pk_n(123,132,213,231,312)=\begin{cases}
      1, & \text{if }n\not=2, \\
      3, & \text{otherwise}.
    \end{cases}$$
\end{theorem}
\begin{proof}
The cases when $n=1,2$ are easy to verify, so we assume $n\geq 3$. By Theorem 1 in \cite{AP}, $\Av_n(123,132,213,231,312)=\{J_n\}$. Thus, $\pk_n(123,132,213,231,312)=\ell(J_n)=1$. 
\end{proof}

\begin{theorem}\label{2.2}
$$\pk_n(132,213,231,312,321)=\begin{cases}
      n!, & \text{if }n\not=2, \\
      3, & \text{otherwise}.
    \end{cases}$$    
\end{theorem}
\begin{proof}
The cases when $n=1,2$ are easy to verify, so we assume $n\geq 3$. By Theorem 2 in \cite{AP}, $\Av_n(132,213,231,312,321)=\{I_n\}$. Thus, $\pk_n(123,132,213,231,312)=\ell(I_n)=n!$.
\end{proof}

\subsection{Avoiding four patterns}
\begin{center}
\begin{tabular}{|c|c|c|c|}
\hline  Patterns $\sigma_1,\sigma_2,\sigma_3,\sigma_4$  & $\pk_n(\sigma_1,\sigma_2,\sigma_3,\sigma_4)$, $1\leq n\leq8$ & OEIS & Result \\\hline
      123, 132, 213, 231   & \multirow{3}{*}{1, 3, 3, 3, 3, 3, 3, 3} & \multirow{3}{*}{A122553} & \multirow{3}{*}{Theorem \ref{2.3}}\\
  123, 132, 213, 312   & & &\\
  123, 213, 231, 312   & & &\\\hline
  123, 132, 231, 312   & 1, 3, 4, 5, 6, 7, 8, 9 & A065475 & Theorem \ref{2.4}\\\hline
  132, 213, 231, 312   & 1, 3, 7, 25, 121, 721, 5041, 40321 & A038507 ($n\geq2$) & Theorem \ref{2.5}\\\hline
  132, 213, 231, 321   & \multirow{3}{*}{1, 3, 8, 30, 144, 840, 5760, 45360} & \multirow{3}{*}{A059171} & \multirow{3}{*}{Theorem \ref{2.6}}\\
  132, 213, 312, 321   & & &\\
  213, 231, 312, 321   & & &\\\hline
  132, 231, 312, 321  & 1, 3, 9, 36, 180, 1080, 7560, 60480 & A070960 & Theorem \ref{2.7}\\\hline
\end{tabular}
\end{center}

\begin{theorem}\label{2.3}
$$\pk_n(123, 132, 213, 231)=\pk_n(123, 132, 213, 312)=\pk_n(123, 213, 231, 312)=\begin{cases}
    3, & \text{if }n\not=1, \\
    1, & \text{otherwise}.    
\end{cases}$$
\end{theorem}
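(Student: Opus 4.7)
The plan is to apply Corollary \ref{generalpkformula} separately to each of the three pattern collections. The case $n=1$ is trivial: the unique length-$1$ permutation gives $\pk_1=\ell(I_1)=1$. For $n\geq 2$, the task reduces to (i) identifying the avoidance set $\Av_n(\sigma_1,\sigma_2,\sigma_3,\sigma_4)$ explicitly and (ii) evaluating $\sum_{\rho\in\Av_n}\ell(\rho)$.

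For step (i) I would either invoke the appropriate classification from \cite{AP} or argue directly. In each of the three collections only two of the six length-$3$ patterns are allowed; a short case analysis on which relative orders may appear among triples of positions forces the entries of $\rho$ to be almost entirely rigid. Concretely, for every $n\geq 2$ one finds $\Av_n(\sigma_1,\sigma_2,\sigma_3,\sigma_4)=\{J_n,\rho^\star\}$, where the ``other'' permutation is
\begin{align*}
\rho^\star &= J_{\{3,4,\ldots,n\}}\,I_{\{1,2\}} &&\text{for } \Av_n(123,132,213,231),\\
\rho^\star &= I_{\{n-1,n\}}\,J_{\{1,2,\ldots,n-2\}} &&\text{for } \Av_n(123,132,213,312),\\
\rho^\star &= 1\,J_{\{2,3,\ldots,n\}} &&\text{for } \Av_n(123,213,231,312),
\end{align*}
i.e.\ an ``almost decreasing'' permutation with exactly one ascent of length two.

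For step (ii), $\ell(J_n)=\prod_{i=1}^n 1=1$. Each $\rho^\star$ is strictly decreasing apart from that single ascent, so $\ell(i,\rho^\star)=1$ at every index except the one at the end of the ascent, where $\ell(i,\rho^\star)=2$; hence $\ell(\rho^\star)=2$. Summing gives $\pk_n=\ell(J_n)+\ell(\rho^\star)=1+2=3$ for $n\geq 2$, as required.

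The only real obstacle is the combinatorial claim in step (i) that each avoidance set has exactly these two elements. This is a short finite-case verification in the same spirit as the five-pattern theorems immediately above: with only two length-$3$ patterns permitted, one can peel off the entries of $\rho$ from one end, forcing each entry uniquely until only two candidate permutations remain. No generating function or recursion is needed, which is why this case is considerably simpler than the single-pattern cases treated later in the section.
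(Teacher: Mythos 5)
Your proposal is correct and follows essentially the same route as the paper: the paper likewise invokes the classification of these avoidance sets from Adeniran--Pudwell (each being $\{J_n,\rho^\star\}$ with $\rho^\star$ equal to $J_{n-2}\ominus I_2$, $I_2\ominus J_{n-2}$, and $1\oplus J_{n-1}$ respectively, matching your three permutations) and then applies Corollary \ref{generalpkformula} to get $1+2=3$. The $\ell(\rho^\star)=2$ computation via the single ascent is exactly right.
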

\begin{proof}
The cases when $n=1$ are clear, so we assume $n\geq2$ from now on. 

By Theorem 3 in \cite{AP}, $\Av_n(123, 132, 213, 231)=\{J_n,J_{n-2}\ominus I_2\}$, so $\pk_n(123, 132, 213, 231)=\ell(J_n)+\ell(J_{n-2}\ominus I_2)=1+2=3$. 

By Theorem 4 in \cite{AP}, $\Av_n(123, 132, 213, 312)=\{J_n,I_2\ominus J_{n-2}\}$, so $\pk_n(123, 132, 213, 312)=\ell(J_n)+\ell(I_2\ominus J_{n-2})=1+2=3$. 

By Theorem 4 in \cite{AP}, $\Av_n(123, 213, 231, 312)=\{J_n,1\oplus J_{n-1}\}$, so $\pk_n(123, 213, 231, 312)=\ell(J_n)+\ell(1\oplus J_{n-1})=1+2=3$. 
\end{proof}

\begin{theorem}\label{2.4}
$$\pk_n(123, 132, 231, 312)=\begin{cases}
    n+1, & \text{if }n\not=1, \\
    1, & \text{otherwise}.    
\end{cases}$$    
\end{theorem}
\begin{proof}
The case when $n=1$ is clear. For $n\geq2$, by Theorem 3 in \cite{AP}, $\Av_n(123, 132, 231, 312)=\{J_n,J_{n-1}\oplus 1\}$, so $\pk_n(123, 132, 231, 312)=\ell(J_n)+\ell(J_{n-1}\oplus 1)=1+n$. 
\end{proof}

\begin{theorem}\label{2.5}
$$\pk_n(132, 213, 231, 312)=\begin{cases}
    n!+1, & \text{if }n\not=1, \\
    1, & \text{if }n=1.\\
\end{cases}$$    
\end{theorem}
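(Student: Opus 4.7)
The plan is to follow the template of the preceding theorems in this subsection: describe $\Av_n(132, 213, 231, 312)$ explicitly and then apply Corollary \ref{generalpkformula}. The case $n=1$ is trivial since $S_1 = \{1\}$ and all four forbidden patterns have length $3$, yielding $\pk_1 = \ell(I_1) = 1$.

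For $n \geq 2$, I expect that $\Av_n(132, 213, 231, 312) = \{I_n, J_n\}$; this should be recorded in one of the theorems of \cite{AP} describing the four-pattern avoidance classes, and in any event it admits a short direct argument. The key observation is that in any $\pi$ avoiding all four patterns, both the value $n$ and the value $1$ must sit at position $1$ or position $n$. Indeed, if $\pi(k) = n$ with $1 < k < n$, then for any $j_1 < k < j_2$ the triple $(\pi(j_1), n, \pi(j_2))$ forms either a $132$ or a $231$, both forbidden; and if $1$ sits strictly inside, the flanking values form either a $213$ or a $312$. Since $1$ and $n$ cannot share a position, either $\pi(1)=1, \pi(n)=n$ or $\pi(1)=n, \pi(n)=1$, and iterating the same argument on the interior values forces $\pi \in \{I_n, J_n\}$. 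The case $n=2$ is vacuously included since $S_2 = \{I_2, J_2\}$.

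It then remains to compute $\ell(I_n)$ and $\ell(J_n)$ directly from the definition. For $\rho = I_n$, the prefix values $\rho(1),\ldots,\rho(i) = 1,\ldots,i$ are all at most $\rho(i)=i$, so $\ell(i, I_n) = i$ and hence $\ell(I_n) = n!$. For $\rho = J_n$, every earlier value $\rho(j)$ with $j<i$ strictly exceeds $\rho(i) = n+1-i$, so $\ell(i, J_n) = 1$ and $\ell(J_n) = 1$. Summing via Corollary \ref{generalpkformula} yields $\pk_n(132, 213, 231, 312) = \ell(I_n) + \ell(J_n) = n! + 1$ for all $n \geq 2$, as required.

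The proof is essentially routine once the characterization of $\Av_n$ is in hand. I do not anticipate any genuine obstacle: the only point of substance is the reduction to $\{I_n, J_n\}$, and that reduction follows from a one-line position argument for the extremal values $1$ and $n$.
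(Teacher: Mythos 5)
Your proposal is correct and follows essentially the same route as the paper: identify $\Av_n(132,213,231,312)=\{I_n,J_n\}$ (the paper simply cites Theorem 5 of \cite{AP} for this, whereas you also sketch the short direct position argument for the extremal values) and then apply Corollary \ref{generalpkformula} with $\ell(I_n)=n!$ and $\ell(J_n)=1$. The computations match the paper's, so nothing further is needed.
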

\begin{proof}
The case when $n=1$ is clear. For $n\geq3$, by Theorem 5 in \cite{AP}, $\Av_n(132, 213, 231, 312)=\{I_n,J_n\}$, so $\pk_n(132, 213, 231, 312)=\ell(I_n)+\ell(J_n)=n!+1$. 
\end{proof}

\begin{theorem}\label{2.6}
\begin{align*}
\pk_n(132, 213, 231, 321)&=\pk_n(132, 213, 312, 321)=\pk_n(213, 231, 312, 321)=\begin{cases}
    \frac{(n+1)!}{n}, & \text{if }n\not=1, \\
    1, & \text{otherwise}.    
\end{cases}
\end{align*}
\end{theorem}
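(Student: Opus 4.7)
The plan is to mirror the strategy of the preceding theorems in this subsection: invoke the appropriate structural result of Adeniran and Pudwell in \cite{AP} to describe $\Av_n$ for each of the three four-pattern collections, then apply Corollary \ref{generalpkformula} term by term. In each case I expect $\Av_n$ to reduce, for $n\geq 2$, to a two-element set $\{I_n,\sigma_n\}$, where $\sigma_n$ is an ``almost-identity'' determined by the three non-$321$ patterns being avoided: explicitly $\sigma_n=1\ominus I_{n-1}=n,1,2,\ldots,n-1$ for $\{132,213,231,321\}$; $\sigma_n=I_{n-1}\ominus 1=2,3,\ldots,n,1$ for $\{132,213,312,321\}$; and $\sigma_n=I_{n-2}\oplus J_2=1,2,\ldots,n-2,n,n-1$ for $\{213,231,312,321\}$. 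If needed, a short case analysis on the position of the largest entry $n$ in $\pi\in\Av_n$ recovers these descriptions directly: an interior placement of $n$ forces either $132$ or $231$ to appear, so $n$ must sit at one of the extreme one or two positions, and the three non-$321$ patterns then pin down the remainder of $\pi$.

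The remaining work is the routine evaluation of $\ell(I_n)+\ell(\sigma_n)$. One always has $\ell(I_n)=n!$. For each $\sigma_n$ above, reading off $\ell(i,\sigma_n)$ position by position is straightforward: for $1\ominus I_{n-1}$ the leading $n$ blocks any backward extension, so $\ell(1,\sigma_n)=1$ and $\ell(i,\sigma_n)=i-1$ for $2\leq i\leq n$; for $I_{n-1}\ominus 1$ the increasing prefix gives $\ell(i,\sigma_n)=i$ for $i\leq n-1$ and then $\ell(n,\sigma_n)=1$ because of the trailing $1$ sitting immediately right of $n$; for $I_{n-2}\oplus J_2$ the first $n-1$ entries are increasing so $\ell(i,\sigma_n)=i$ for $i\leq n-1$, and $\ell(n,\sigma_n)=1$ on account of the terminal descent from $n$ to $n-1$. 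Each of these products evaluates to $(n-1)!$, so
$$\pk_n=n!+(n-1)!=(n-1)!(n+1)=\frac{(n+1)!}{n}.$$

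The base case $n=1$ is immediate, and $n=2$ agrees with the computation since $\sigma_2=21$ in all three collections, contributing $\ell(I_2)+\ell(21)=2+1=3$. I do not anticipate any substantive obstacle: once the two elements of $\Av_n$ are extracted from \cite{AP}, the calculation is essentially identical to the earlier cases in this section where $|\Av_n|=2$. The only thing to watch carefully is correctly locating the unique descent of $\sigma_n$ in each of the three cases, as that is exactly the position $i$ at which $\ell(i,\sigma_n)$ drops from its ``natural'' identity value to $1$ and produces the $(n-1)!$ rather than $n!$.
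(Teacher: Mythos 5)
Your proposal is correct and follows essentially the same route as the paper: for each of the three collections it invokes the Adeniran--Pudwell description of $\Av_n$ as the two-element set $\{I_n,\sigma_n\}$ with exactly the $\sigma_n$ the paper uses ($1\ominus I_{n-1}$, $I_{n-1}\ominus 1$, and $I_{n-2}\oplus J_2$ respectively), and then applies Corollary \ref{generalpkformula} to get $n!+(n-1)!=\frac{(n+1)!}{n}$. Your position-by-position evaluations of $\ell(i,\sigma_n)$ are all accurate, so there is nothing to add.
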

\begin{proof}
The cases when $n=1$ are clear, so we assume $n\geq2$ from now on. 

By Theorem 6 in \cite{AP}, $\Av_n(132, 213, 231, 321)=\{I_n,1\ominus I_{n-1}\}$, so $\pk_n(132, 213, 231, 321)=\ell(I_n)+\ell(1\ominus I_{n-1})=n!+(n-1)!=\frac{(n+1)!}{n}$. 

By Theorem 7 in \cite{AP}, $\Av_n(132, 213, 312, 321)=\{I_n,I_{n-1}\ominus 1\}$, so $\pk_n(132, 213, 312, 321)=\ell(I_n)+\ell(I_{n-1}\ominus 1)=n!+(n-1)!=\frac{(n+1)!}{n}$. 

By Theorem 7 in \cite{AP}, $\Av_n(213, 231, 312, 321)=\{I_n,I_{n-2}\oplus J_2\}$, so $\pk_n(213, 231, 312, 321)=\ell(J_n)+\ell(I_{n-2}\oplus J_2)=n!+(n-1)!=\frac{(n+1)!}{n}$. 
\end{proof}

\begin{theorem}\label{2.7}
$$\pk_n(132, 231, 312, 321)=\begin{cases}
    \frac32n!, & \text{if }n\not=1, \\
    1, & \text{if }n=1,
\end{cases}$$    
\end{theorem}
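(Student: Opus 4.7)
The plan is to imitate the strategy of the previous theorems in this subsection: look up the structural description of $\Av_n(132,231,312,321)$ given by Adeniran and Pudwell in \cite{AP}, and then apply Corollary \ref{generalpkformula} directly. The case $n=1$ is trivial, since $S_1=\{1\}$ contains no length $3$ patterns at all, giving $\pk_1=\ell(I_1)=1$.

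For $n\geq2$, the relevant theorem in \cite{AP} should give $\Av_n(132,231,312,321)=\{I_n,\, J_2\oplus I_{n-2}\}$, i.e.\ either the identity $12\cdots n$ or the permutation $21 3 4\cdots n$. To see that nothing else can occur, one observes that avoiding $321$ restricts the permutation to be a union of two increasing sequences, and then a short case analysis on the position of the value $1$ (or equivalently on $\sigma(1)$) forces $\sigma(1)\in\{1,2\}$: if $\sigma(1)=k\geq 3$, then $\sigma(1)$ together with any two of the values $<k$ appearing later would form a $312$ or $321$. If $\sigma(1)=1$, a triple $(1,b,c)$ with $b<c$ forms $123$ or $132$, so the tail must be increasing, giving $I_n$; if $\sigma(1)=2$, the value $1$ must come immediately after (else we get a $231$ or $321$), and the remaining tail must again be increasing to avoid $132$, giving $J_2\oplus I_{n-2}$.

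It then remains to compute the two summands in Corollary \ref{generalpkformula}. We already noted in the paper that $\ell(I_n)=n!$. For $\sigma=J_2\oplus I_{n-2}=2,1,3,4,\ldots,n$, the computation of $\ell(i,\sigma)$ is immediate from the definition: $\ell(1,\sigma)=1$ trivially; $\ell(2,\sigma)=1$ because $\sigma(1)=2>1=\sigma(2)$; and for $3\leq i\leq n$, the value $\sigma(i)=i$ dominates every earlier entry, so $\ell(i,\sigma)=i$. Hence
\[
\ell(J_2\oplus I_{n-2})=1\cdot 1\cdot 3\cdot 4\cdots n=\tfrac{n!}{2},
\]
and summing gives $\pk_n(132,231,312,321)=n!+\tfrac{n!}{2}=\tfrac{3}{2}n!$, as required.

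There is no real obstacle here once the description of $\Av_n$ is in hand; the only mildly delicate point is confirming that the list $\{I_n,J_2\oplus I_{n-2}\}$ is complete (and specialises correctly at $n=2$, where $J_2\oplus I_0=J_2=21$, giving $\ell(I_2)+\ell(J_2)=2+1=3=\tfrac{3}{2}\cdot 2!$, matching the table). This is handled either by quoting the corresponding classification theorem from \cite{AP} or by the short case analysis above.
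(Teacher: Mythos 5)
Your proposal is correct and follows essentially the same route as the paper: for $n\geq2$ it quotes the classification $\Av_n(132,231,312,321)=\{I_n,\,J_2\oplus I_{n-2}\}$ from Theorem 6 of \cite{AP} and applies Corollary \ref{generalpkformula}, computing $\ell(I_n)=n!$ and $\ell(J_2\oplus I_{n-2})=\tfrac{n!}{2}$ exactly as the paper does. Your supplementary case analysis verifying the classification is a harmless bonus (though the sentence about the triple $(1,b,c)$ with $b<c$ forming ``$123$ or $132$'' is garbled --- only the $132$ occurrence is relevant, since $123$ is not among the avoided patterns --- the intended conclusion that the tail is increasing still follows).
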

\begin{proof}
The case when $n=1$ is clear. For $n\geq2$, by Theorem 6 in \cite{AP}, $\Av_n(132, 231, 312, 321)=\{I_n,J_2\oplus I_{n-2}\}$, so $\pk_n(132, 231, 312, 321)=\ell(I_n)+\ell(J_2\oplus I_{n-2})=n!+\frac12n!$. 
\end{proof}

\subsection{Avoiding three patterns}
\begin{center}
\begin{tabular}{|c|c|c|c|}
\hline  Patterns $\sigma_1,\sigma_2,\sigma_3$  & $\pk_n(\sigma_1,\sigma_2,\sigma_3)$, $1\leq n\leq8$ & OEIS & Result \\\hline
  123, 132, 231   & \multirow{3}{*}{1, 3, 6, 10, 15, 21, 28, 36} & \multirow{3}{*}{A000217} & \multirow{3}{*}{Theorem \ref{2.8}}\\
  123, 132, 312   & & &\\
  123, 231, 312   & & &\\\hline
  123, 213, 231   & \multirow{2}{*}{1, 3, 5, 7, 9, 11, 13, 15} & \multirow{2}{*}{A005408} & \multirow{2}{*}{Theorem \ref{2.9}}\\
  123, 213, 312   & & &\\\hline
  123, 132, 213   & 1, 3, 5, 11, 21, 43, 85, 171 & A001045 & Theorem \ref{2.10}\\\hline
  132, 213, 231   & \multirow{3}{*}{1, 3, 9, 33, 153, 873, 5913, 46233} & \multirow{3}{*}{A007489} & \multirow{3}{*}{Theorem \ref{2.11}}\\
  123, 213, 312   & & &\\
  213, 231, 312   & & &\\\hline
  132, 231, 312   & 1, 3, 10, 41, 206, 1237, 8660, 69281 & A002627 & Theorem \ref{2.12}\\\hline
  132, 231, 321   & \multirow{2}{*}{1, 3, 11, 50, 274, 1764, 13068, 109584} & \multirow{2}{*}{A000254} & \multirow{2}{*}{Theorem \ref{2.13}}\\
  132, 312, 321   & & &\\\hline
  132, 213, 321   & \multirow{2}{*}{1, 3, 10, 40, 192, 1092, 7248, 55296} & \multirow{2}{*}{A136128} & \multirow{2}{*}{Theorem \ref{2.14}}\\
  213, 231, 321   & & &\\\hline
  213, 312, 321   & 1, 3, 10, 42, 216, 1320, 9360, 75600 & A007680 & Theorem \ref{2.15}\\\hline
  231, 312, 321   & 1, 3, 11, 53, 309, 2119, 16687, 148329 & A000255 & Theorem \ref{2.16}\\\hline
\end{tabular}
\end{center}

\begin{theorem}\label{2.8}
$\pk_n(123, 132, 231)=\pk_n(123, 132, 312)=\pk_n(123, 231, 312)=\binom{n+1}2.$
\end{theorem}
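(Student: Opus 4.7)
The plan is to handle all three cases uniformly via Corollary \ref{generalpkformula}. For each triple of patterns $\{\sigma_1,\sigma_2,\sigma_3\}$, I first need a structural description of $\Av_n(\sigma_1,\sigma_2,\sigma_3)$. Following the style of the earlier proofs in this section, I will either pull this description from Adeniran--Pudwell \cite{AP} or derive it directly by case analysis on the position $k$ of the value $n$ in $\rho\in\Av_n$.

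Doing this case analysis, I expect each triple to force $\Av_n$ to consist of exactly $n$ permutations parameterised by $k\in\{1,\ldots,n\}$, each of which is ``nearly decreasing''. Concretely, in $\Av_n(123,132,231)$, 132-avoidance forces the values to the left of $n$ to dominate those to the right while 231-avoidance forces the opposite, so $n$ sits at position $1$ or position $n$; combined with 123-avoidance, this gives the recursion $a_n=a_{n-1}+1$ and the elements $J_j\ominus(J_{n-j-1}\oplus 1)$ for $j=0,\ldots,n-1$. For $\Av_n(123,132,312)$, 132 forces the left block to dominate the right, while 123 and 312 force both blocks to be decreasing, giving $\pi_k=(n-1,n-2,\ldots,n-k+1)\,n\,(n-k,n-k-1,\ldots,1)$ for $k\in\{1,\ldots,n\}$. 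The case $\Av_n(123,231,312)$ is symmetric and yields $\pi_k=(k-1,k-2,\ldots,1)\,n\,(n-1,n-2,\ldots,k)$.

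Next I will evaluate $\ell(\rho)$ for each of the $n$ permutations $\rho$. Because each such permutation consists of one or two decreasing runs glued together by the value $n$, almost every position $i$ satisfies $\rho(i-1)>\rho(i)$ and hence contributes a factor of $1$ to $\ell(\rho)$. The only exception is the ``gluing position'' (position $n$ in the first case, position of $n$ in the other two), where $\ell(i,\rho)$ counts the length of the maximal weakly-monotone run backwards. A short computation shows that, as the parameter $k$ ranges over $\{1,\ldots,n\}$, this single nontrivial factor takes each value in $\{1,2,\ldots,n\}$ exactly once.

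Finally, summing over the $n$ permutations in each class gives $\sum_{k=1}^n k=\binom{n+1}{2}$ in all three cases. The only mildly tricky step should be the case analysis for $\Av_n(123,132,231)$, since the constraints from $132$ and $231$ push in opposite directions and the resulting recursive description is less uniform than in the other two triples; once that is pinned down, the $\ell$-computations and the final summation are routine.
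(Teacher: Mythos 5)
Your proposal is correct and follows essentially the same route as the paper: obtain the explicit description of each avoidance class as $n$ permutations indexed by the position of $n$ (the paper cites Adeniran--Pudwell for these; your direct derivations match, e.g.\ $J_{n-k}\ominus(J_{k-1}\oplus1)$ for the first triple), observe that $\ell(\rho)$ has a single nontrivial factor equal to $k$, and sum to $\binom{n+1}{2}$ via Corollary \ref{generalpkformula}. The only cosmetic difference is your reindexing of $\Av_n(123,231,312)$ as $J_{k-1}\oplus J_{n-k+1}$, which makes the three sums uniform rather than the paper's $\sum_{k=1}^{n-1}(k+1)+1$.
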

\begin{proof}
By Theorem 8 in \cite{AP}, $\Av_n(123, 132, 231)=\{J_{n-k}\ominus(J_{k-1}\oplus1)\mid k\in[n]\}$, so $\pk_n(123, 132, 231)=\sum_{k=1}^n\ell(J_{n-k}\ominus(J_{k-1}\oplus1))=\sum_{k=1}^nk=\binom{n+1}2$. 

By Theorem 9 in \cite{AP}, $\Av_n(123, 132, 312)=\{(J_{k-1}\oplus1)\ominus J_{n-k}\mid k\in[n]\}$, so $\pk_n(123, 132, 312)=\sum_{k=1}^n\ell((J_{k-1}\oplus1)\ominus J_{n-k})=\sum_{k=1}^nk=\binom{n+1}2$.

By Theorem 9 in \cite{AP}, $\Av_n(123, 231, 312)=\{J_k\oplus J_{n-k}\mid k\in[n]\}$, so $\pk_n(123, 231, 312)=\sum_{k=1}^n\ell(J_k\oplus J_{n-k})=\sum_{k=1}^{n-1}(k+1)+1=\binom{n+1}2$. 
\end{proof}

\begin{theorem}\label{2.9}
$\pk_n(123, 213, 231)=\pk_n(123, 213, 312)=2n-1.$
\end{theorem}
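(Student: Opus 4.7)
The plan is to follow the template of the preceding theorems in this subsection: apply Corollary \ref{generalpkformula} to an explicit description of $\Av_n(123, 213, 231)$ and $\Av_n(123, 213, 312)$ from \cite{AP}. A check of small cases ($n \le 4$) suggests that each of these sets consists of exactly $n$ permutations. Specifically, I expect
\[
\Av_n(123, 213, 231) = \{J_k \ominus (1 \oplus J_{n-k-1}) \mid 0 \leq k \leq n-1\}
\]
and
\[
\Av_n(123, 213, 312) = \{(1 \oplus J_{n-k-1}) \ominus J_k \mid 0 \leq k \leq n-1\},
\]
the two families being inverses of each other as permutations, which is consistent with the fact that inversion swaps $231$ and $312$ while fixing $123$ and $213$. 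In one-line notation, the first family is $n, n-1, \ldots, n-k+1, 1, n-k, n-k-1, \ldots, 2$: a descending run of length $k$, then a $1$, then another descending run of length $n-k-1$.

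With these descriptions in hand, the $\ell$-values are read off immediately. For $\pi = J_k \ominus (1 \oplus J_{n-k-1})$ with $0 \le k \le n-2$, only position $k+2$ contributes a nontrivial factor: $\pi(k+1) = 1$ and $\pi(k+2) = n-k$ both lie $\le n-k$, while $\pi(k) = n-k+1$ does not, so $\ell(k+2, \pi) = 2$ and $\ell(i, \pi) = 1$ at every other position, giving $\ell(\pi) = 2$. For $k = n-1$ we have $\pi = J_n$ and $\ell(\pi) = 1$. Summing over the $n$ values of $k$ yields $\pk_n(123, 213, 231) = 2(n-1) + 1 = 2n - 1$. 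An analogous case analysis, this time with the unique ``dip'' located inside the initial segment rather than between the two descending runs, applies to $(1 \oplus J_{n-k-1}) \ominus J_k$ and gives $\pk_n(123, 213, 312) = 2n - 1$.

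The main obstacle is simply locating the precise theorem of \cite{AP} that provides the two $\Av_n$ descriptions above; the rest of the argument is entirely routine and mirrors the structure of the other proofs in this subsection. If such a description is not stated explicitly in \cite{AP}, it can be verified by a short induction on $n$, splitting on the positions of $1$ and $n$ in $\pi$ and observing that the joint avoidance of $\{123, 213\}$ together with either $231$ or $312$ rigidly determines $\pi$ up to the single integer parameter $k$.
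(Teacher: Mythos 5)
Your proposal is correct and follows essentially the same route as the paper: the families you conjecture are, after reindexing $k\mapsto k-1$, exactly the descriptions $\Av_n(123,213,231)=\{J_{k-1}\ominus(1\oplus J_{n-k})\mid k\in[n]\}$ and $\Av_n(123,213,312)=\{(1\oplus J_{n-k})\ominus J_{k-1}\mid k\in[n]\}$ that the paper quotes from Theorems 9 and 10 of \cite{AP}, and your $\ell$-value computation matches the paper's. The only difference is that the paper cites \cite{AP} for these descriptions rather than proving them, which resolves the one obstacle you flag.
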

\begin{proof}
By Theorem 9 in \cite{AP}, $\Av_n(123, 213, 231)=\{J_{k-1}\ominus(1\oplus J_{n-k})\mid k\in[n]\}$, so $\pk_n(123, 213, 231)=\sum_{k=1}^n\ell(J_{k-1}\ominus(1\oplus J_{n-k}))=\sum_{k=1}^{n-1}2+1=2n-1$. 

By Theorem 10 in \cite{AP}, $\Av_n(123, 213, 312)=\{(1\oplus J_{n-k})\ominus J_{k-1}\mid k\in[n]\}$, so $\pk_n(123, 213, 312)=\sum_{k=1}^n\ell((1\oplus J_{n-k})\ominus J_{k-1})=\sum_{k=1}^{n-1}2+1=2n-1$.
\end{proof}

\begin{theorem}\label{2.10}
$\pk_n(123, 132, 213)=\frac232^n+\frac13(-1)^n$.
\end{theorem}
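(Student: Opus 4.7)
The plan is to follow the template used repeatedly in this section: describe $\Av_n(123,132,213)$ explicitly enough to plug into Corollary \ref{generalpkformula}, and simplify the resulting sum. The key structural fact I would establish (or invoke from \cite{AP}) is that $\sigma\in S_n$ avoids $123$, $132$, and $213$ if and only if $\sigma(i)>\sigma(k)$ for every $i<k$ with $k\geq i+2$. The ``only if'' direction is immediate once one notes that, among the six patterns in $S_3$, it is precisely $123$, $132$, and $213$ that have first entry smaller than last entry; the converse is a direct verification.

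Using this characterisation, I would partition $\Av_n(123,132,213)$ according to the position of $n$. Since $\sigma(1)$ must dominate every $\sigma(k)$ with $k\geq 3$, the element $n$ can only lie in position $1$ or $2$. If $\sigma(1)=n$, the word $\sigma(2)\sigma(3)\cdots\sigma(n)$ is (after the obvious relabelling) an arbitrary element of $\Av_{n-1}(123,132,213)$. If $\sigma(2)=n$, the same dominance argument forces $\sigma(1)=n-1$, and then $\sigma(3)\cdots\sigma(n)$ ranges freely over $\Av_{n-2}(123,132,213)$.

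I would then compute the $\ell$-contribution of each case. In the first case, position $1$ is always larger than everything to its right, so $\ell(1,\sigma)=1$ and for $i\geq 2$ the window in the definition of $\ell(i,\sigma)$ is blocked from reaching position $1$; a short unwinding gives $\ell(\sigma)=\ell(\sigma')$ where $\sigma'$ is the tail. In the second case, $\ell(1,\sigma)=1$, $\ell(2,\sigma)=2$ since $\sigma(1)=n-1<n=\sigma(2)$, and for $i\geq 3$ the window is blocked from reaching positions $1$ and $2$, giving $\ell(\sigma)=2\,\ell(\sigma'')$. Summing by Corollary \ref{generalpkformula} yields the linear recurrence
\[
\pk_n(123,132,213)=\pk_{n-1}(123,132,213)+2\,\pk_{n-2}(123,132,213),
\]
valid for $n\geq 2$, with initial values $\pk_0(123,132,213)=\pk_1(123,132,213)=1$.

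Finally, I would solve the recurrence in closed form. Its characteristic polynomial $x^2-x-2=(x-2)(x+1)$ has roots $2$ and $-1$, so $\pk_n(123,132,213)=A\cdot 2^n+B\cdot(-1)^n$ for some constants, and matching the initial conditions gives $A=2/3$ and $B=1/3$, producing the claimed formula. I expect no real obstacle here: the only step that requires care is the two window computations for $\ell(i,\sigma)$ in the case analysis, which is entirely routine.
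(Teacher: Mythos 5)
Your proof is correct and follows essentially the same route as the paper: the decomposition by the position of $n$ is exactly the paper's description $\Av_n(123,132,213)=\{1\ominus\sigma\}\cup\{12\ominus\sigma\}$ (cited there from Theorem 11 of \cite{AP}), and the window computations, the recurrence $p_n=p_{n-1}+2p_{n-2}$, and the closed form all match. The only difference is that you prove the structural characterisation of $\Av_n(123,132,213)$ from scratch (via the observation that $123$, $132$, $213$ are precisely the length-3 patterns with first entry smaller than last), which makes the argument self-contained but does not change the method.
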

\begin{proof}
The cases when $n=1,2$ are easy to verify, so assume $n\geq 3$. By Theorem 11 in \cite{AP}, $\Av_n(123, 132, 213)=\{1\ominus\sigma\mid\sigma\in\Av_{n-1}(123, 132, 213)\}\cup\{12\ominus\sigma\mid\sigma\in\Av_{n-2}(123, 132, 213)\}$, so \begin{align*}
\pk_n(123, 132, 213)&=\sum_{\sigma\in\Av_n(123, 132, 213)}\ell(\sigma)\\
&=\sum_{\sigma\in\Av_{n-1}(123, 132, 213)}\ell(1\ominus\sigma)+\sum_{\sigma\in\Av_{n-2}(123, 132, 213)}\ell(12\ominus\sigma)\\
&=\sum_{\sigma\in\Av_{n-1}(123, 132, 213)}\ell(\sigma)+\sum_{\sigma\in\Av_{n-2}(123, 132, 213)}2\ell(\sigma)\\
&=\pk_{n-1}(123, 132, 213)+2\pk_{n-2}(123, 132, 213).
\end{align*}
Solving this standard linear recurrence gives $\pk_n(123, 132, 213)=\frac232^n+\frac13(-1)^n$.
\end{proof}

\begin{theorem}\label{2.11}
$\pk_n(132, 213, 231)=\pk_n(132, 213, 312)=\pk_n(213, 231, 312)=\sum_{k=1}^nk!.$
\end{theorem}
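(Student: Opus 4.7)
The plan is to follow exactly the template of the preceding theorems in this section: invoke the explicit descriptions of $\Av_n(\sigma_1,\sigma_2,\sigma_3)$ obtained by Adeniran and Pudwell in \cite{AP}, then apply Corollary~\ref{generalpkformula} and compute each factor $\ell(\rho)$ by direct inspection. I expect that in each of the three cases the avoidance class is parametrised by a single integer $k$, so that the resulting sum collapses neatly to $\sum_{k=1}^nk!$.

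More concretely, I expect the three descriptions in \cite{AP} to take the form (up to reparametrisation) $\Av_n(132,213,231)=\{J_k\ominus I_{n-k}\mid 0\leq k\leq n-1\}$, $\Av_n(132,213,312)=\{I_{n-k}\ominus J_k\mid 0\leq k\leq n-1\}$, and $\Av_n(213,231,312)=\{I_n\}\cup\{I_{n-k}\oplus J_k\mid 2\leq k\leq n\}$. Each of these families consists of permutations obtained by gluing a decreasing block to an increasing block in one of a few simple ways.

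The main computation is then to evaluate $\ell(\rho)=\prod_{i=1}^n\ell(i,\rho)$ for each such $\rho$. The guiding observation is that $\ell(i,\rho)>1$ can occur only at positions $i$ for which $\rho(i)$ is the maximum of $\rho(1),\dots,\rho(i)$; every other position has its immediate predecessor larger than $\rho(i)$ and so contributes a factor of $1$. A short case analysis then yields $\ell(J_k\ominus I_{n-k})=\ell(I_{n-k}\ominus J_k)=(n-k)!$ and $\ell(I_{n-k}\oplus J_k)=(n-k+1)!$ for $k\geq 1$, with $\ell(I_n)=n!$.

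Summing via Corollary~\ref{generalpkformula} then gives $\sum_{k=0}^{n-1}(n-k)!=\sum_{k=1}^nk!$ in the first two cases, and $n!+\sum_{k=2}^n(n-k+1)!=\sum_{k=1}^nk!$ in the third. The only mild obstacle is that the three avoidance classes have slightly different shapes (two built from skew sums and one from a direct sum together with the extra element $I_n$) and so cannot be handled in a single computation; three separate but essentially identical $\ell$-calculations are required, and none is difficult individually.
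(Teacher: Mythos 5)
Your overall route is exactly the paper's: quote the Adeniran--Pudwell descriptions of the three avoidance classes, apply Corollary~\ref{generalpkformula}, and evaluate $\ell(\rho)$ for each member. Your guessed descriptions are correct up to reparametrisation (the paper cites $\Av_n(132,213,231)=\{J_{k-1}\ominus I_{n-k+1}\mid k\in[n]\}$, $\Av_n(132,213,312)=\{I_k\ominus J_{n-k}\mid k\in[n]\}$ and $\Av_n(213,231,312)=\{I_{k-1}\oplus J_{n-k+1}\mid k\in[n]\}$, and note that your extra element $I_n$ is just $I_{n-1}\oplus J_1$), and your three values $\ell(J_k\ominus I_{n-k})=(n-k)!$, $\ell(I_{n-k}\ominus J_k)=(n-k)!$, $\ell(I_{n-k}\oplus J_k)=(n-k+1)!$ are all correct, as are the resulting sums.

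However, your ``guiding observation'' is false, and if applied literally it would break the first of your three computations. It is not true that $\ell(i,\rho)>1$ forces $\rho(i)$ to be a left-to-right maximum: take $\rho=J_1\ominus I_{n-1}=n\,1\,2\cdots(n-1)$. Here the only left-to-right maximum is at position $1$, yet $\ell(i,\rho)=i-1>1$ for every $i\geq 3$, because the window ending at $i$ reaches back to position $2$ before hitting the value $n$. Your observation would therefore give $\ell(J_k\ominus I_{n-k})=1$, contradicting the value $(n-k)!$ you (correctly) assert two lines later. The correct local argument for that family is: for $i>k$ the values at positions $k+1,\dots,i$ are $1,\dots,i-k$, all at most $\rho(i)=i-k$, while $\rho(k)=n-k+1>i-k$, so $\ell(i,\rho)=i-k$ and the product over $i>k$ is $(n-k)!$. (For the other two families, where the increasing run sits at the front, your observation happens to hold and the computation is as you describe.) So the final formula and the structure of the proof are right, but the stated general principle needs to be replaced by the direct window computation in the $J_k\ominus I_{n-k}$ case.
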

\begin{proof}
By Theorem 12 in \cite{AP}, $\Av_n(132, 213, 231)=\{J_{k-1}\ominus I_{n-k+1}\mid k\in[n]\}$, so $\pk_n(132, 213, 231)=\sum_{k=1}^n\ell(J_{k-1}\ominus I_{n-k+1})=\sum_{k=1}^n(n-k+1)!=\sum_{k=1}^nk!$. 

By Theorem 13 in \cite{AP}, $\Av_n(132, 213, 312)=\{I_k\ominus J_{n-k}\mid k\in[n]\}$, so $\pk_n(132, 213, 312)=\sum_{k=1}^n\ell(I_k\ominus J_{n-k})=\sum_{k=1}^nk!$. 

By Theorem 13 in \cite{AP}, $\Av_n(213, 231, 312)=\{I_{k-1}\oplus J_{n-k+1}\mid k\in[n]\}$, so $\pk_n(213, 231, 312)=\sum_{k=1}^n\ell(I_{k-1}\oplus J_{n-k+1})=\sum_{k=1}^nk!$. 
\end{proof}

\begin{theorem}\label{2.12}
$\pk_n(132, 231, 312)=\sum_{k=1}^n\frac{n!}{k!}$.
\end{theorem}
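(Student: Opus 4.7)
The plan is to follow the template of the preceding theorems in this subsection: read off a description of $\Av_n(132,231,312)$ from Adeniran and Pudwell's classification in \cite{AP}, and then apply Corollary~\ref{generalpkformula} directly. The description I expect is $\Av_n(132,231,312)=\{J_k\oplus I_{n-k}\mid k\in[n]\}$, so that each avoider has the shape $k, k-1, \ldots, 2, 1, k+1, k+2, \ldots, n$ for some $k\in[n]$. If this form is not stated explicitly in \cite{AP}, it can be verified by a short induction on $n$ by locating the position $p$ of the entry $n$ in an avoider $\sigma$: if $1<p<n$ then $\sigma(p-1),\sigma(p),\sigma(p+1)$ is forced to be either a $231$ pattern (if $\sigma(p-1)>\sigma(p+1)$) or a $132$ pattern (if $\sigma(p-1)<\sigma(p+1)$), so $p\in\{1,n\}$; in the case $p=n$ the inductive hypothesis reduces to the claim, while $p=1$ together with avoidance of $312$ forces $\sigma=J_n$.

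Given this description, the remainder is a mechanical computation of $\ell(J_k\oplus I_{n-k})$. For $1\leq i\leq k$ the permutation $J_k\oplus I_{n-k}$ is strictly decreasing on the first $k$ positions, so $\ell(i,J_k\oplus I_{n-k})=1$; for $k+1\leq i\leq n$ the value at position $i$ equals $i$, which dominates every entry to its left (entries in the increasing suffix are at most $i-1$, and entries in the prefix are at most $k\leq i$), so $\ell(i,J_k\oplus I_{n-k})=i$. Therefore
\[\ell(J_k\oplus I_{n-k})=(k+1)(k+2)\cdots n=\frac{n!}{k!},\]
and summing over $k\in[n]$ via Corollary~\ref{generalpkformula} yields $\pk_n(132,231,312)=\sum_{k=1}^n n!/k!$, as required.

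The only real obstacle is pinning down the correct description of the avoidance class; with it in hand, the result drops out in a single line, exactly matching the style of the surrounding theorems in this subsection.
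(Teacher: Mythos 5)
Your proposal is correct and matches the paper's proof: the paper likewise cites the classification $\Av_n(132,231,312)=\{J_k\oplus I_{n-k}\mid k\in[n]\}$ (Theorem 12 in \cite{AP}) and applies Corollary \ref{generalpkformula} with $\ell(J_k\oplus I_{n-k})=n!/k!$. Your added verification of the classification and the explicit computation of each $\ell(i,\cdot)$ are both sound, just more detailed than what the paper records.
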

\begin{proof}
By Theorem 12 in \cite{AP}, $\Av_n(132, 231, 312)=\{J_k\oplus I_{n-k}\mid k\in[n]\}$, so $\pk_n(132, 231, 312)=\sum_{k=1}^n\ell(J_k\oplus I_{n-k})=\sum_{k=1}^n\frac{n!}{k!}$.
\end{proof}

\begin{theorem}\label{2.13}
$\pk_n(132, 231, 321)=\pk_n(132, 312, 321)=\sum_{k=1}^n\frac{n!}{k}.$
\end{theorem}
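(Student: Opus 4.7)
The plan is to mirror the strategy of the preceding theorems in Section \ref{computepk}: invoke Corollary \ref{generalpkformula} together with the explicit descriptions of $\Av_n(132,231,321)$ and $\Av_n(132,312,321)$ obtained by Adeniran and Pudwell \cite{AP}. Since the target sum $\sum_{k=1}^n n!/k$ has $n$ terms, each avoidance set should consist of exactly $n$ permutations indexed by $k\in[n]$, with the $k$-th one contributing $n!/k$ to the sum.

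For $\Av_n(132,231,321)$, the family I expect is $\rho_k = k, 1, 2, \ldots, k-1, k+1, \ldots, n$: the value $k$ placed in position $1$ followed by the remaining values in increasing order. Computing $\ell$ from its definition should yield $\ell(1,\rho_k)=1$, then $\ell(i,\rho_k)=i-1$ for $2\leq i\leq k$ (because the leading value $k$ prevents longer runs), and $\ell(i,\rho_k)=i$ for $i>k$. The product simplifies to $\ell(\rho_k)=(k-1)!\cdot n!/k! = n!/k$. For $\Av_n(132,312,321)$, I expect the symmetric family $\rho'_k = 2, 3, \ldots, k, 1, k+1, \ldots, n$ (the value $1$ placed in position $k$, with the other values in increasing order around it), and a nearly identical computation should again give $\ell(\rho'_k)=n!/k$.

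The only non-routine step in this plan is locating in \cite{AP} the correct descriptions of the two avoidance sets and verifying that they match the families above; once this is settled, everything else is a mechanical application of Corollary \ref{generalpkformula} and the definition of $\ell(i,\cdot)$, and summing $n!/k$ over $k\in[n]$ immediately yields the theorem.
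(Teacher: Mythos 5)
Your proposal is correct and follows the paper's proof exactly: the families you guessed are precisely $\Av_n(132,231,321)=\{(1\ominus I_{k-1})\oplus I_{n-k}\mid k\in[n]\}$ and $\Av_n(132,312,321)=\{(I_{k-1}\ominus 1)\oplus I_{n-k}\mid k\in[n]\}$ from Theorems 14 and 15 of \cite{AP}, written in one-line notation. Your computation of $\ell(\rho_k)=\ell(\rho'_k)=(k-1)!\cdot n!/k!=n!/k$ matches the paper's application of Corollary \ref{generalpkformula}.
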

\begin{proof}
By Theorem 14 in \cite{AP}, $\Av_n(132, 231, 321)=\{(1\ominus I_{k-1})\oplus I_{n-k}\mid k\in[n]\}$, so $\pk_n(132, 231, 321)\\=\sum_{k=1}^n\ell((1\ominus I_{k-1})\oplus I_{n-k})=\sum_{k=1}^n\frac{n!}{k}$. 

By Theorem 15 in \cite{AP}, $\Av_n(132, 312, 321)=\{(I_{k-1}\ominus 1)\oplus I_{n-k}\mid k\in[n]\}$, so $\pk_n(132, 312, 321)=\sum_{k=1}^n\ell((I_{k-1}\ominus 1)\oplus I_{n-k})=\sum_{k=1}^n\frac{n!}{k}$.
\end{proof}

\begin{theorem}\label{2.14}
$\pk_n(132, 213, 321)=\pk_n(213, 231, 321)=\sum_{k=1}^nk!(n-k)!$.
\end{theorem}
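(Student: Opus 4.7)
The plan is to apply Corollary~\ref{generalpkformula} using descriptions of $\Av_n(132,213,321)$ and $\Av_n(213,231,321)$ from \cite{AP}. Based on the shape of the sum $\sum_{k=1}^n k!(n-k)!$ and the pattern of the preceding proofs in this section, I expect each avoidance set to be parameterized by $k\in[n]$, with each permutation contributing exactly $k!(n-k)!$ to the sum.

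For $\Av_n(132,213,321)$, the natural candidate is $\{I_k\ominus I_{n-k}\mid k\in[n]\}$, whose elements are the permutations $n-k+1,n-k+2,\ldots,n,1,2,\ldots,n-k$. The $\ell$-value splits cleanly as a product over the two increasing runs: the first run of length $k$ contributes $\prod_{i=1}^k i=k!$ since each $\ell(i,\sigma)=i$; the second run of length $n-k$ contributes $(n-k)!$ because the value $n$ at position $k$ blocks any monotone extension past position $k+1$, forcing $\ell(k+j,\sigma)=j$ for $1\le j\le n-k$. Thus $\ell(I_k\ominus I_{n-k})=k!(n-k)!$.

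For $\Av_n(213,231,321)$, the analogous candidate is $\{I_{k-1}\oplus(1\ominus I_{n-k})\mid k\in[n]\}$, whose elements are $1,2,\ldots,k-1,n,k,k+1,\ldots,n-1$. The computation of $\ell$ is nearly identical: the initial ascending block of length $k-1$ contributes $(k-1)!$, position $k$ holds the global maximum $n$ and hence contributes a factor of $k$ (the monotone run extends back through the entire prefix), and the final ascending block contributes $(n-k)!$ by the same reasoning as before. Multiplying gives $k!(n-k)!$ once more, and summing over $k\in[n]$ yields $\sum_{k=1}^n k!(n-k)!$ in both cases.

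The main obstacle I anticipate is verifying the precise descriptions of the two avoidance sets, since I have to guess them from the form of the answer rather than derive them from scratch. I would sanity-check my candidates against small cases (e.g., for $n=3$ they give $\{312,231,123\}$ and $\{312,132,123\}$ respectively, which agree with a direct enumeration) and otherwise match them to the corresponding theorems in \cite{AP}, which should be stated in a form parallel to those cited in the earlier proofs of this section. Once the descriptions are in hand, everything else is a routine application of the definition of $\ell$.
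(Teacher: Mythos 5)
Your proposal is correct and follows exactly the paper's approach: both avoidance-set descriptions you guessed ($\{I_k\ominus I_{n-k}\mid k\in[n]\}$ and $\{I_{k-1}\oplus(1\ominus I_{n-k})\mid k\in[n]\}$) are precisely the ones cited from Theorem 15 of \cite{AP}, and your computation of $\ell$ via Corollary \ref{generalpkformula} matches the paper's, with more detail shown. Nothing further is needed.
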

\begin{proof}
By Theorem 15 in \cite{AP}, $\Av_n(132, 213, 321)=\{I_k\ominus I_{n-k}\mid k\in[n]\}$, so $\pk_n(132, 213, 321)=\sum_{k=1}^n\ell(I_k\ominus I_{n-k})=\sum_{k=1}^nk!(n-k)!$. 

By Theorem 15 in \cite{AP}, $\Av_n(213, 231, 321)=\{I_{k-1}\oplus(1\ominus I_{n-k})\mid k\in[n]\}$, so $\pk_n(213, 231, 321)=\sum_{k=1}^n\ell(I_{k-1}\oplus(1\ominus I_{n-k}))=\sum_{k=1}^nk!(n-k)!$.
\end{proof}

\begin{theorem}\label{2.15}
$\pk_n(213, 312, 321)=(2n-1)(n-1)!$.
\end{theorem}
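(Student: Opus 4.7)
The plan is to apply Corollary \ref{generalpkformula}, mirroring the proof template used throughout this subsection: determine the permutations in $\Av_n(213, 312, 321)$, compute $\ell(\sigma)$ for each, and sum the results.

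First I would characterize $\Av_n(213, 312, 321)$. The key observation is that if $\sigma$ avoids both $213$ and $312$, then the entries preceding the value $n$ must be increasing (any inversion among them together with $n$ would complete a $213$ pattern) and the entries following $n$ must be decreasing (any ascent among them together with $n$ would complete a $312$ pattern). Imposing additionally that $\sigma$ avoid $321$ forces at most one entry to appear after $n$, since $n$ together with any two (decreasing) entries to its right already forms a $321$ pattern. A direct check confirms no other forbidden patterns arise internally, so
$$\Av_n(213, 312, 321) = \{I_n\} \cup \{I_{k-1} \oplus (I_{n-k} \ominus 1) : k \in [n-1]\},$$
where the second family consists of the explicit permutations $1, 2, \ldots, k-1, k+1, \ldots, n, k$.

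Next I would compute $\ell(\sigma)$ for these $n$ permutations. For $\sigma = I_n$ we have $\ell(\sigma) = n!$. For $\sigma = I_{k-1} \oplus (I_{n-k} \ominus 1)$ with $k \in [n-1]$, the first $n-1$ entries list $[n] \setminus \{k\}$ in increasing order and the final entry is $k$, so every prefix value is bounded by $\sigma(i)$ for $1 \leq i \leq n-1$, giving $\ell(i, \sigma) = i$, while $\sigma(n-1) = n > k = \sigma(n)$ forces $\ell(n, \sigma) = 1$. Hence $\ell(\sigma) = (n-1)!$ for each of these $n-1$ permutations, and summing via Corollary \ref{generalpkformula} yields $\pk_n(213, 312, 321) = n! + (n-1)(n-1)! = (2n-1)(n-1)!$.

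The only step with any subtlety is pinning down the structural description of $\Av_n(213, 312, 321)$; in line with the surrounding proofs this would normally be cited as a theorem from \cite{AP}, but in any case the short structural argument above suffices. Once the description is in hand, the computation of the $\ell(i, \sigma)$ values is routine.
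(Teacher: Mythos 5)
Your proposal is correct and follows essentially the same route as the paper: the paper cites Theorem 16 of \cite{AP} for the description $\Av_n(213,312,321)=\{I_{k-1}\oplus(I_{n-k}\ominus1)\mid k\in[n]\}$ (whose $k=n$ member is $I_n$, matching your set) and then sums $\ell(\sigma)$ via Corollary \ref{generalpkformula} exactly as you do, obtaining $n!+(n-1)(n-1)!$. Your added structural derivation of the avoidance class is a correct, self-contained substitute for the citation, and your computation of the $\ell(i,\sigma)$ values agrees with the paper's.
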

\begin{proof}
By Theorem 16 in \cite{AP}, $\Av_n(213, 312, 321)=\{I_{k-1}\oplus(I_{n-k}\ominus1)\mid k\in[n]\}$, so $\pk_n(213, 312, 321)\\=\sum_{k=1}^n\ell(I_{k-1}\oplus(I_{n-k}\ominus1))=\sum_{k=1}^{n-1}(n-1)!+n!=(2n-1)(n-1)!$.
\end{proof}

\begin{theorem}\label{2.16}
$\pk_n(231, 312, 321)=\sum_{k=0}^n(-1)^k\frac{n!}{k!}(n-k+1)$.
\end{theorem}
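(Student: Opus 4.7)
The plan is to derive and solve a linear recurrence for $\pk_n(231,312,321)$ by applying Corollary \ref{generalpkformula} to a recursive decomposition of $\Av_n(231,312,321)$. The decomposition I would use (to be cited from \cite{AP} or argued directly) is
$$\Av_n(231,312,321)=\{\sigma\oplus 1\mid\sigma\in\Av_{n-1}(231,312,321)\}\cup\{\sigma\oplus J_2\mid\sigma\in\Av_{n-2}(231,312,321)\},$$
the two families being disjoint. A direct argument: in any $\rho\in\Av_n(231,312,321)$, the maximum entry $n$ can only play the role of the largest element in each of the three forbidden patterns; 231-avoidance then forces every entry after $n$ to exceed every entry before $n$, while 312- and 321-avoidance force the entries after $n$ to be both strictly decreasing and strictly increasing. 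Hence at most one entry may follow $n$, and if one does then 231-avoidance pins that entry down to $n-1$.

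With the decomposition in hand, I would compute $\ell$ on each piece. For $\rho=\sigma\oplus 1$ with $\sigma\in\Av_{n-1}(231,312,321)$, the last entry equals the maximum $n$, so $\ell(n,\rho)=n$ and $\ell(i,\rho)=\ell(i,\sigma)$ for $i<n$, giving $\ell(\sigma\oplus 1)=n\cdot\ell(\sigma)$. For $\rho=\sigma\oplus J_2$ with $\sigma\in\Av_{n-2}(231,312,321)$, the penultimate entry is $n$ yielding $\ell(n-1,\rho)=n-1$, while the final entry $n-1$ is immediately preceded by the larger value $n$ so $\ell(n,\rho)=1$; hence $\ell(\sigma\oplus J_2)=(n-1)\ell(\sigma)$. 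Summing these contributions via Corollary \ref{generalpkformula} gives the recurrence
$$\pk_n(231,312,321)=n\,\pk_{n-1}(231,312,321)+(n-1)\,\pk_{n-2}(231,312,321)$$
for $n\geq 2$, with the easily verified initial values $\pk_0(231,312,321)=\pk_1(231,312,321)=1$.

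To finish I would solve the recurrence via the exponential generating function $A(x)=\sum_{n\geq 0}\pk_n(231,312,321)\,x^n/n!$. Standard index manipulations translate the recurrence into the first-order linear ODE $(1-x)A'(x)=(1+x)A(x)$ with $A(0)=1$, which integrates to $A(x)=e^{-x}/(1-x)^2$. Expanding the product
$$A(x)=\left(\sum_{k\geq 0}\frac{(-x)^k}{k!}\right)\left(\sum_{j\geq 0}(j+1)x^j\right)$$
gives $[x^n]A(x)=\sum_{k=0}^n(-1)^k(n-k+1)/k!$, and multiplying by $n!$ yields the claimed formula $\pk_n(231,312,321)=\sum_{k=0}^n(-1)^k\frac{n!}{k!}(n-k+1)$.

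The main obstacle is the final recurrence-to-ODE step and integrating it, but both are standard exponential generating function manipulations. The structural decomposition and the $\ell$-computation are essentially forced once the correct perspective is taken; as an alternative to the EGF, one could equally verify the closed form directly by induction on the recurrence, though the EGF route is cleaner.
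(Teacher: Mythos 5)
Your proposal is correct and follows essentially the same route as the paper: the same decomposition of $\Av_n(231,312,321)$ into $\sigma\oplus 1$ and $\sigma\oplus 21$ (which the paper cites from Theorem 17 of \cite{AP} and you additionally justify directly), the same recurrence $p_n=np_{n-1}+(n-1)p_{n-2}$, and the same exponential generating function $e^{-x}(1-x)^{-2}$. The only cosmetic difference is that you derive the first-order ODE $(1-x)A'(x)=(1+x)A(x)$ directly, whereas the paper routes through an auxiliary series $Q(x)$ with $Q'(x)=xP(x)$; both yield the identical closed form.
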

\begin{proof}
The cases when $n=1,2$ are easy to verify, so we assume $n\geq 3$. By Theorem 17 in \cite{AP}, $\Av_n(231, 312, 321)=\{\sigma\oplus1\mid\sigma\in\Av_{n-1}(231, 312, 321)\}\cup\{\sigma\oplus21\mid\sigma\in\Av_{n-2}(231, 312, 321)\}$, so \begin{align*}
\pk_n(231, 312, 321)&=\sum_{\sigma\in\Av_{n-1}(231, 312, 321)}\ell(\sigma\oplus1)+\sum_{\sigma\in\Av_{n-2}(231, 312, 321)}\ell(\sigma\oplus21)\\
&=n\pk_{n-1}(231, 312, 321)+(n-1)\pk_{n-2}(231, 312, 321).
\end{align*}
For brevity, let $p_n=\pk_n(231, 312, 321)$. Consider the exponential generating functions $P(x)=\sum_{n\geq0}p_n\frac{x^n}{n!}$ and $Q(x)=\sum_{n\geq0}p_n\frac{x^{n+2}}{n!(n+2)}$. Note that $Q'(x)=xP(x)$. Moreover, from the recurrence relation above, we have
\begin{align*}
\frac1xQ'(x)&=P(x)=1+x+\sum_{n\geq2}p_n\frac{x_n}{n!}\\
&=1+x+\sum_{n\geq2}p_{n-1}\frac{x^n}{(n-1)!}+\sum_{n\geq2}p_{n-2}\frac{x^n}{n(n-2)!}\\
&=1+x+x(P(x)-1)+Q(x)=1+Q'(x)+Q(x).
\end{align*}
Solving this differential equation and using $Q(0)=0$, we get $Q(x)=e^{-x}(1-x)^{-1}-1$ and $P(x)=\frac1xQ'(x)=e^{-x}(1-x)^{-2}$. Thus, $\pk_n(231, 312, 321)=p_n=n![x^n]P(x)=\sum_{k=0}^n(-1)^k\frac{n!}{k!}(n-k+1)$.
\end{proof}

\subsection{Avoiding two patterns}
\begin{center}
\begin{tabular}{|c|c|c|c|}
\hline  Patterns $\sigma_1,\sigma_2$  & $\pk_n(\sigma_1,\sigma_2)$, $1\leq n\leq8$ & OEIS & Result\\\hline
  123, 231   & \multirow{2}{*}{1, 3, 8, 17, 31, 51, 78, 113} & \multirow{2}{*}{A105163} & \multirow{2}{*}{Theorem \ref{2.17}}\\
  123, 312   & & &\\\hline
  123, 132   & 1, 3, 8, 21, 55, 144, 377, 987 & A001906 & Theorem \ref{2.18}\\\hline
  123, 213   & 1, 3, 7, 17, 41, 99, 239, 577 & A001333 & Theorem \ref{2.19}\\\hline
  132, 231   & \multirow{3}{*}{1, 3, 12, 60, 360, 2520, 20160, 181440} & \multirow{3}{*}{A001710} & \multirow{3}{*}{Theorem \ref{2.20}}\\
  132, 312   & & &\\
  231, 312   & & &\\\hline
  132, 213   & \multirow{2}{*}{1, 3, 11, 47, 231, 1303, 8431, 62391} & \multirow{2}{*}{A051296} & \multirow{2}{*}{Theorem \ref{2.21}}\\
  213, 231   & & &\\\hline
  132, 321   & 1, 3, 13, 68, 412, 2844, 22116, 191904 & New & Theorem \ref{2.22}\\\hline
  213, 321   & 1, 3, 12, 56, 300, 1836, 12768, 100224 & New & Theorem \ref{2.23}\\\hline
  213, 312   & 1, 3, 11, 49, 261, 1631, 11743, 95901 & A001339 & Theorem \ref{2.24}\\\hline
  231, 321   & 1, 3, 13, 71, 461, 3447, 29093, 273343 & A003319 & Theorem \ref{2.25}\\\hline
  312, 321   & 1, 3, 13, 73, 501, 4051, 37633, 394353 & A000262 & Theorem \ref{2.26}\\\hline
\end{tabular}
\end{center}

\begin{theorem}\label{2.17}
$\pk_n(123, 231)=\pk_n(123, 312)=\frac16n(n-1)(n+4)+1.$   
\end{theorem}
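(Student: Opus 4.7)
The plan is to apply Corollary \ref{generalpkformula} after characterising the two avoidance classes by a case analysis on the position $k$ of $n$.

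For $\sigma \in \Av_n(123, 231)$ with $n$ at position $k$: 231 avoidance (with $n$ as the middle entry) forces every value before $n$ to be less than every value after $n$, and 123 avoidance forces the portion before $n$ to be decreasing. Hence this portion must be $(k-1)(k-2)\cdots 1$. If $k\ge 2$, any ascending pair after $n$ would combine with a smaller element before $n$ to create a 123 pattern, so the after-portion must also be decreasing and $\sigma = J_{k-1}\oplus J_{n-k+1}$. If $k=1$, then $\sigma = 1\ominus\tau$ for an arbitrary $\tau\in\Av_{n-1}(123,231)$.

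For $\sigma \in \Av_n(123, 312)$ with $n$ at position $k$: 123 avoidance again forces the before-portion to be decreasing, and 312 avoidance with $n$ playing the role of the ``3'' forces the after-portion to be decreasing. A further 312 pattern using two entries before $n$ and one after forces the set of values taken by the before-portion to be a contiguous interval $\{a, a+1, \ldots, a+k-2\}\subset [n-1]$ (otherwise one finds $i<j<k<l$ with $\sigma_j < \sigma_l < \sigma_i$). Thus for $k\ge 2$ there are $n-k+1$ such permutations (one per choice of $a\in\{1,\ldots,n-k+1\}$), each of the form $(a+k-2)(a+k-3)\cdots a \,\, n \,\, (n-1)\cdots(a+k-1)\,(a-1)\cdots 1$; for $k=1$ the only option is $\sigma = J_n$.

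A direct computation then shows that in every permutation appearing in either class, the only nontrivial factor is $\ell(k, \sigma) = k$ at the position of $n$; all other factors $\ell(i,\sigma)$ equal $1$, since each non-$n$ entry is immediately preceded by a strictly larger one. Hence $\ell(\sigma) = k$ whenever $k\ge 2$, while $\ell(1\ominus\tau) = \ell(\tau)$ in the $k=1$ branch of the 231 case. Summing via Corollary \ref{generalpkformula} yields
\[
\pk_n(123,231) = \pk_{n-1}(123,231) + \sum_{k=2}^n k, \qquad \pk_n(123,312) = 1 + \sum_{k=2}^n k(n-k+1),
\]
and both expressions evaluate to $1 + \binom{n+2}{3} - n = \tfrac{1}{6}n(n-1)(n+4)+1$ by the hockey stick identity. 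The only substantive step is the case analysis, especially the interval constraint on the before-portion in the 312 case; the subsequent arithmetic is routine.
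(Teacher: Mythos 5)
Your proposal is correct and follows essentially the same route as the paper: both characterise $\Av_n(123,231)$ and $\Av_n(123,312)$ explicitly and then sum $\ell(\sigma)$ via Corollary \ref{generalpkformula}, with your classes matching the paper's $\{J_{n-a-b}\ominus(J_a\oplus J_b)\}\cup\{J_n\}$ and $\{(J_a\oplus J_b)\ominus J_{n-a-b}\}\cup\{J_n\}$ under the substitution $a=k-1$, and your weights $\ell(\sigma)=k$ agreeing with the paper's $a+1$. The only difference is that you derive these characterisations from scratch (recursively in the 231 case) rather than citing Theorems 18 and 19 of \cite{AP}, which makes the argument self-contained but does not change the method.
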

\begin{proof}
By Theorem 18 in \cite{AP}, $\Av_n(123,231)=\{J_{n-a-b}\ominus(J_a\oplus J_{b})\mid a,b\geq 1, a+b\leq n\}\cup\{J_n\}$, so $\pk_n(123, 231)=\sum_{a=1}^{n-1}\sum_{b=1}^{n-a}\ell(J_{n-a-b}\ominus(J_a\oplus J_{b}))+\ell(J_n)=\sum_{a=1}^{n-1}\sum_{b=1}^{n-a}(a+1)+1=\frac16(n-1)n(n+4)+1$.  

By Theorem 19 in \cite{AP}, $\Av_n(123,312)=\{(J_a\oplus J_{b})\ominus J_{n-a-b}\mid a,b\geq 1, a+b\leq n\}\cup\{J_n\}$, so $\pk_n(123, 312)=\sum_{a=1}^{n-1}\sum_{b=1}^{n-a}\ell((J_a\oplus J_{b})\ominus J_{n-a-b}))+\ell(J_n)=\sum_{a=1}^{n-1}\sum_{b=1}^{n-a}(a+1)+1=\frac16(n-1)n(n+4)+1$.
\end{proof}

\begin{theorem}\label{2.18}
$\pk_n(123, 132)=\frac1{\sqrt5}(\frac{3+\sqrt5}2)^n-\frac1{\sqrt5}(\frac{3-\sqrt 5}2)^n$.
\end{theorem}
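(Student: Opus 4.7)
The plan is to mirror the approach used above for $\pk_n(123, 132, 213)$: establish a recursive description of $\Av_n(123, 132)$, apply Corollary \ref{generalpkformula} with a careful evaluation of $\ell(\sigma)$ to obtain a recurrence for $\pk_n(123, 132)$, and then extract the closed form via the ordinary generating function.

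First I would show that
$$\Av_n(123, 132) = \{(J_{k-1} \oplus 1) \ominus \sigma \mid k \in [n],\ \sigma \in \Av_{n-k}(123, 132)\},$$
by arguing on the position $k$ of $n$ in $\pi$. Avoiding $123$ forces $\pi(1), \ldots, \pi(k-1)$ to be decreasing, and avoiding $132$ (using that $\pi(k) = n$ is the maximum) forces every value left of $n$ to exceed every value right of $n$; the remaining suffix is an arbitrary $(123, 132)$-avoiding permutation of $[n-k]$. Concretely, $\pi = (n-1, n-2, \ldots, n-k+1, n, \sigma_1, \ldots, \sigma_{n-k})$.

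Next I would compute $\ell(\pi)$ block by block. The decreasing prefix gives $\ell(i, \pi) = 1$ for $1 \leq i \leq k-1$; the entry $n$ at position $k$ is a new maximum, so $\ell(k, \pi) = k$; and for $j \in [n-k]$, since $\pi(k) = n > \sigma_j$, the run of predecessors $\leq \pi(k+j)$ is confined to the $\sigma$ block, so $\ell(k+j, \pi) = \ell(j, \sigma)$. Multiplying, $\ell((J_{k-1} \oplus 1) \ominus \sigma) = k \cdot \ell(\sigma)$, and Corollary \ref{generalpkformula} gives the recurrence
$$\pk_n(123, 132) = \sum_{k=1}^n k \cdot \pk_{n-k}(123, 132), \qquad \pk_0(123, 132) = 1.$$

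Setting $A(x) = \sum_{n \geq 0} \pk_n(123, 132)\, x^n$, the recurrence becomes $A(x) - 1 = \frac{x}{(1-x)^2} A(x)$, hence $A(x) = \frac{(1-x)^2}{1 - 3x + x^2}$. Factoring $1 - 3x + x^2 = (1 - \alpha x)(1 - \beta x)$ with $\alpha, \beta = \frac{3 \pm \sqrt 5}{2}$ and extracting coefficients (either by partial fractions, or by reading off the $n \geq 3$ recurrence $\pk_n = 3\pk_{n-1} - \pk_{n-2}$ with initial values $\pk_1 = 1$, $\pk_2 = 3$) yields the claimed formula. The main obstacle is verifying the structure of $\Av_n(123, 132)$ and the per-position $\ell$ computation; once these are in place, the generating-function manipulation is mechanical.
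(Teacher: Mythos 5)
Your proposal is correct and follows essentially the same route as the paper: the same decomposition $\Av_n(123,132)=\{(J_{k-1}\oplus1)\ominus\sigma\}$ (which the paper imports from Theorem 20 of \cite{AP} rather than reproving), the same evaluation $\ell((J_{k-1}\oplus1)\ominus\sigma)=k\,\ell(\sigma)$ via Corollary \ref{generalpkformula}, the same recurrence $\pk_n=\sum_{k=1}^n k\,\pk_{n-k}$, and the same generating function $\frac{(1-x)^2}{1-3x+x^2}$. The only difference is that you supply direct justifications for the structure of $\Av_n(123,132)$ and the per-position computation of $\ell$, which the paper treats as citation and routine verification respectively.
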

\begin{proof}
By Theorem 20 in \cite{AP}, $\Av_n(123, 132)=\{(J_{k-1}\oplus1)\ominus\sigma\mid k\in[n], \sigma\in\Av_{n-k}(123,132)\}$, so \begin{align*}
\pk_n(123, 132)&=\sum_{k=1}^n\sum_{\sigma\in\Av_{n-k}(123,132)}\ell((J_{k-1}\oplus1)\ominus\sigma)\\
&=\sum_{k=1}^n\sum_{\sigma\in\Av_{n-k}(123,132)}k\ell(\sigma)=\sum_{k=1}^{n}k\pk_{n-k}(123, 132).
\end{align*}
Let $p_n=\pk_n(123, 132)$ and consider the generating function $P(x)=\sum_{n\geq0}p_nx^n$. The recurrence relation above implies that $P(x)=1+\sum_{n\geq1}\sum_{k=1}^nkp_{n-k}x^n=1+\sum_{k\geq1}kx^k\sum_{n\geq k}p_{n-k}x^{n-k}=1+\frac{x}{(1-x)^2}P(x)$. Thus, $P(x)=\frac{(1-x)^2}{1-3x+x^2}=1+\frac{x}{x^2-3x+1}$. It follows that for $n\geq1$, $\pk_n(123, 132)=p_n=[x^{n-1}](x^2-3x+1)^{-1}=\frac1{\sqrt5}(\frac{3+\sqrt5}2)^n-\frac1{\sqrt5}(\frac{3-\sqrt 5}2)^n$.
\end{proof}

\begin{theorem}\label{2.19}
$\pk_n(123, 213)=\frac12(1-\sqrt2)^n+\frac12(1+\sqrt2)^n$.   
\end{theorem}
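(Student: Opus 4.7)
The plan follows the template used for several of the preceding cases, most closely the proof of $\pk_n(123,132)$: invoke an Adeniran--Pudwell-style recursive structural description of $\Av_n(123,213)$, compute $\ell(\pi)$ for every permutation in the family in terms of $\ell(\tau)$ for a smaller permutation $\tau$, and then solve the resulting linear recurrence.

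The starting point is the structural description
$$\Av_n(123,213) = \{(1\oplus J_{n-k}) \ominus \tau \mid k\in [n],\ \tau\in\Av_{k-1}(123,213)\},$$
which comes from the analogous theorem in \cite{AP} (and can also be checked directly by conditioning on $k=\pi(1)$: avoidance of $123$ forces the values exceeding $k$ to appear in decreasing order after position $1$, while avoidance of $213$ forces them all to appear before any value less than $k$; hence $\pi$ must begin with $k$, continue with $n,n-1,\ldots,k+1$, and end with a permutation $\tau\in\Av_{k-1}(123,213)$ on the values $\{1,\ldots,k-1\}$).

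Next I would compute $\ell((1\oplus J_{n-k})\ominus\tau)$ position by position. Position $1$ always contributes $1$; when $k<n$, position $2$ holds the overall maximum $n$ and contributes $2$; positions $3,\ldots,n-k+1$ lie in the decreasing block $n,n-1,\ldots,k+1$ and each contribute $1$; and positions $n-k+2,\ldots,n$ hold values at most $k-1$, so their defining windows cannot extend leftward past position $n-k+1$, contributing exactly $\ell(\tau)$ in total. Hence $\ell((1\oplus J_{n-k})\ominus\tau) = 2\ell(\tau)$ when $k\leq n-1$, while $\ell((1\oplus J_{0})\ominus\tau)=\ell(\tau)$ when $k=n$ (the decreasing block is empty, so the factor of $2$ is not picked up).

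Writing $p_n=\pk_n(123,213)$ and applying Corollary \ref{generalpkformula} then yields $p_n = 2\sum_{j=0}^{n-2} p_j + p_{n-1}$ for $n \geq 2$. Subtracting the corresponding identity for $p_{n-1}$ telescopes the cumulative sum and leaves the standard second-order recurrence $p_n = 2p_{n-1}+p_{n-2}$ for $n\geq 2$, with $p_0=p_1=1$. Solving the characteristic equation $x^2-2x-1=0$, whose roots are $1\pm\sqrt 2$, and fitting the initial conditions yields $p_n=\frac12(1+\sqrt 2)^n+\frac12(1-\sqrt 2)^n$, as required. I do not foresee a genuine obstacle; the only detail requiring mild care is handling the boundary case $k=n$ separately in the $\ell$ computation.
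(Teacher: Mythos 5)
Your proof is correct and follows essentially the same route as the paper's: the paper uses the suffix decomposition $\Av_n(123,213)=\{\sigma\ominus(1\oplus J_{k-1})\}$ from Theorem 21 of \cite{AP}, while you use the mirror prefix decomposition $\{(1\oplus J_{n-k})\ominus\tau\}$ (which you justify directly), but both yield the identical recurrence $p_n=p_{n-1}+2\sum_{j=0}^{n-2}p_j$, hence $p_n=2p_{n-1}+p_{n-2}$, and the same solution. Your case analysis for $\ell$, including the separate treatment of $k=n$, matches the paper's split of the $k=1$ term from the rest.
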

\begin{proof}
By Theorem 21 in \cite{AP}, $\Av_n(123, 213)=\{\sigma\ominus(1\oplus J_{k-1})\mid k\in[n], \sigma\in\Av_{n-k}(123, 213)\}$, so
\begin{align*}
\pk_n(123, 213)&=\sum_{k=1}^n\sum_{\sigma\in\Av_{n-k}(123,213)}\ell(\sigma\ominus(1\oplus J_{k-1}))\\
&=\sum_{\sigma\in\Av_{n-1}(123,213)}\ell(\sigma)+\sum_{k=2}^n\sum_{\sigma\in\Av_{n-k}(123,213)}2\ell(\sigma)\\
&=\pk_{n-1}(123, 213)+\sum_{k=2}^{n}2\pk_{n-k}(123, 213).
\end{align*}
Let $p_n=\pk_n(123, 213)$, it follows that for $n\geq2$
\begin{align*}
p_{n+1}&=p_n+\sum_{k=2}^{n+1}p_{n+1-k}=p_n+2p_{n-1}+\sum_{k=2}^{n}2p_{n-k}\\
&=p_n+2p_{n-1}+p_n-p_{n-1}=2p_n+p_{n-1}.
\end{align*}
Solving this standard linear recurrence using the easily verified initial conditions $p_1=1,p_2=3$ gives $\pk_n(123, 213)=p_n=\frac12(1-\sqrt2)^n+\frac12(1+\sqrt2)^n$. 
\end{proof}

\begin{theorem}\label{2.20}
$\pk_n(132, 231)=\pk_n(132, 312)=\pk_n(231, 312)=\frac12(n+1)!$.   
\end{theorem}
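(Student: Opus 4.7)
The plan is to compute each of these three quantities by applying Corollary~\ref{generalpkformula} to the descriptions of $\Av_n(132, 231)$, $\Av_n(132, 312)$ and $\Av_n(231, 312)$ given in \cite{AP}, following the template used throughout this section. In each case the description decomposes an arbitrary $\pi$ into a smaller avoiding permutation $\sigma$ and a tail consisting either of the maximal element $n$ alone or of a decreasing block of which $n$ is the leftmost entry, and the $\ell$-function factorises cleanly through this decomposition.

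For the pair $(132, 231)$, the two forbidden patterns jointly force $n$ to sit at position $1$ or position $n$ of $\pi$, since any interior position $i$ of $n$ together with neighbours $a<i<c$ would produce either a $132$ or a $231$ pattern according to the sign of $\pi(a)-\pi(c)$. Consequently \cite{AP} writes $\Av_n(132, 231)$ for $n\ge 2$ as the disjoint union $\{1 \ominus \sigma : \sigma \in \Av_{n-1}(132, 231)\} \cup \{\sigma \oplus 1 : \sigma \in \Av_{n-1}(132, 231)\}$. A direct check from the definition of $\ell(i, \rho)$ yields $\ell(1 \ominus \sigma) = \ell(\sigma)$ (the leading $n$ forces $\ell(1, \cdot) = 1$ and leaves the remaining factors of $\sigma$ unchanged) and $\ell(\sigma \oplus 1) = n \cdot \ell(\sigma)$ (since $\ell(n, \sigma \oplus 1) = n$). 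Summing via Corollary~\ref{generalpkformula} gives the recurrence $\pk_n(132, 231) = (n+1)\,\pk_{n-1}(132, 231)$, which together with $\pk_1(132, 231) = 1$ solves to $\frac{(n+1)!}{2}$.

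For $(132, 312)$ and $(231, 312)$, \cite{AP}'s descriptions are parametrised by the position $i\in[n]$ of the maximum (respectively the minimum): each element of $\Av_n$ consists of a shifted copy of some $\sigma \in \Av_{i-1}$ on the first $i-1$ positions, followed by the single maximal entry at position $i$, followed by a decreasing run of length $n-i$ (the description for $(231, 312)$ being analogous with obvious modifications). In both cases the first $i-1$ positions contribute $\ell(\sigma)$, the position of $n$ contributes the positional factor $i$, and each position of the decreasing tail contributes $1$, so $\ell(\pi) = i\cdot \ell(\sigma)$. Corollary~\ref{generalpkformula} then gives $\pk_n = \sum_{i=1}^n i\cdot \pk_{i-1}$, and taking the first difference simplifies this to $\pk_n = (n+1)\pk_{n-1}$, again producing $\frac{(n+1)!}{2}$. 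The main, and only slightly delicate, task is the verification of the $\ell$-factorisations: it requires careful tracking, for each position of $\pi$, of the longest backward run of smaller values across the three structural pieces of the decomposition, after which the resulting recurrences are all routine.
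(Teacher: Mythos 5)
Your proposal is correct, and for the pairs $(132,312)$ and $(231,312)$ it is essentially identical to the paper's argument: both use the recursive descriptions from Theorem 23 of \cite{AP} (max at position $i$, preceded by a smaller avoider and followed by a decreasing tail), verify that $\ell(\pi)=i\,\ell(\sigma)$, obtain $\pk_n=\sum_{i=1}^n i\,\pk_{i-1}$, and telescope to $\pk_n=(n+1)\pk_{n-1}$. For $(132,231)$ you take a slightly different route: the paper uses the explicit parametrisation $\Av_n(132,231)=\{J_{[n]\setminus(S\cup\{1\})}1I_S\mid S\subset[n]\setminus\{1\}\}$ from Theorem 22 of \cite{AP} and evaluates $\sum_{S}\prod_{i\in S}i=\prod_{i=2}^n(1+i)$ in closed form with no recurrence, whereas you derive from scratch the recursive decomposition $\Av_n=\{1\ominus\sigma\}\cup\{\sigma\oplus1\}$ (justified correctly by the observation that $n$ must occupy position $1$ or $n$) and get the recurrence $\pk_n=(n+1)\pk_{n-1}$ directly. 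Both are valid; your version is self-contained and uniform across the three cases, while the paper's is a one-line product evaluation. Two small inaccuracies worth fixing: the recursive description of $\Av_n(132,231)$ is your own derivation, not what Theorem 22 of \cite{AP} states, so the attribution should be adjusted; and the parenthetical ``respectively the minimum'' for $(231,312)$ is off --- the decomposition $\sigma\oplus J_k$ there is still organised around the maximum $n$, which heads the decreasing tail rather than being preceded by it, though this does not affect the $\ell$-factorisation or the resulting recurrence.
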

\begin{proof}
By Theorem 22 in \cite{AP}, $\Av_n(132, 231)=\{J_{[n]\setminus(S\cup\{1\})}1I_S\mid S\subset[n]\setminus\{1\}\}$, so $\pk_n(132, 231)=\sum_{S\subset[n]\setminus\{1\}}\ell(J_{[n]\setminus(S\cup\{1\})}1I_S)=\sum_{S\subset[n]\setminus\{1\}}\prod_{i\in S}i=\prod_{i=2}^n(1+i)=\frac12(n+1)!$.

By Theorem 23 in \cite{AP}, $\Av_n(132, 312)=\{(\sigma\oplus1)\ominus J_{k-1}\mid k\in[n],\sigma\in\Av_{n-k}(132, 312)\}$, so 
\begin{align*}
\pk_n(132, 312)&=\sum_{k=1}^n\sum_{\sigma\in\Av_{n-k}(132, 312)}\ell((\sigma\oplus1)\ominus J_{k-1})=\sum_{k=1}^n\sum_{\sigma\in\Av_{n-k}(132, 312)}(n-k+1)\ell(\sigma)\\
&=\sum_{k=1}^n(n-k+1)\pk_{n-k}(132, 312)=\sum_{k=0}^{n-1}(k+1)\pk_k(132,312).
\end{align*}
It follows that $\pk_{n+1}(132, 312)-\pk_n(132, 312)=(n+1)\pk_n(132, 312)$ for $n\geq1$, so $\pk_{n+1}(132, 312)=(n+2)\pk_n(132, 312)$. An easy induction gives $\pk_n(132, 312)=\frac12(n+1)!$.

By Theorem 23 in \cite{AP}, $\Av_n(231, 312)=\{\sigma\oplus J_k\mid k\in[n],\sigma\in\Av_{n-k}(231, 312)\}$, so \begin{align*}
\pk_n(231, 312)&=\sum_{k=1}^n\sum_{\sigma\in\Av_{n-k}(231, 312)}\ell(\sigma\oplus J_k)=\sum_{k=1}^n\sum_{\sigma\in\Av_{n-k}(231, 312)}(n-k+1)\ell(\sigma)\\
&=\sum_{k=1}^n(n-k+1)\pk_{n-k}(231, 312)=\sum_{k=0}^{n-1}(k+1)\pk_k(231,312).
\end{align*}
Since this is the same recurrence relation as the case above, and it is easy to verify the initial values match, we have $\pk_n(231, 312)=\frac12(n+1)!$ as well.
\end{proof}

\begin{theorem}\label{2.21}
$$\pk_n(132, 213)=\pk_n(213, 231)=\sum_{k=1}^n\sum_{\substack{a_1+\cdots a_k=n\\a_1,\cdots,a_k\geq1}}a_1!\cdots a_k!.$$   
\end{theorem}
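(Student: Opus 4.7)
The plan is to invoke Corollary \ref{generalpkformula} together with recursive characterizations of $\Av_n(132, 213)$ and $\Av_n(213, 231)$ from Adeniran and Pudwell \cite{AP}. In each case I decompose an avoiding permutation around the position of $n$, writing $\sigma = \alpha\, n\, \beta$ with $|\alpha| = k-1$ for some $k \in [n]$. For $\Av_n(132, 213)$, avoiding $213$ (with $n$ as the largest entry) forces $\alpha$ to be increasing, and avoiding $132$ forces every entry of $\alpha$ to exceed every entry of $\beta$, so $\alpha = (n-k+1)(n-k+2)\cdots(n-1)$ and $\beta$ standardizes to an element of $\Av_{n-k}(132, 213)$. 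For $\Av_n(213, 231)$, the same argument with $231$ in place of $132$ gives $\alpha = 12\cdots(k-1)$ and $\beta$ standardizing to an element of $\Av_{n-k}(213, 231)$.

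The next step is to compute $\ell(\sigma)$ directly from its definition. In both decompositions, the positions inside $\alpha$ contribute $\ell(i,\sigma) = i$ by the increasing property of $\alpha$, accumulating $(k-1)!$. At the position of $n$ the contribution is $k$. For a position $p+j$ inside $\beta$ (where $p = k$), the value $\sigma(p+j) < n$ prevents the backward run from crossing the peak at position $p$, so $\ell(p+j, \sigma)$ equals $\ell(j, \beta')$ where $\beta' = \operatorname{std}(\beta)$. This step requires the most care, especially for $\Av_n(213, 231)$ where the small increasing prefix $\alpha$ could seem to allow a longer backward run, but the value $n$ sitting at position $k$ truly blocks it. Multiplying the three contributions yields $\ell(\sigma) = k!\, \ell(\beta')$ in both settings.

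Summing via Corollary \ref{generalpkformula} then gives the identical recurrence
\[
q_n = \sum_{k=1}^n k!\, q_{n-k}, \qquad q_0 = 1,
\]
for both $q_n = \pk_n(132, 213)$ and $q_n = \pk_n(213, 231)$. To extract the closed form, I introduce the generating functions $Q(x) = \sum_{n\geq 0} q_n x^n$ and $F(x) = \sum_{j\geq 1} j!\, x^j$; the recurrence is equivalent to $Q(x) = 1 + F(x) Q(x)$, so $Q(x) = \sum_{k\geq 0} F(x)^k$. For $n \geq 1$, extracting $[x^n] F(x)^k = \sum_{a_1+\cdots+a_k=n,\, a_i \geq 1} a_1!\cdots a_k!$ and summing over $k \in [n]$ yields the claimed formula. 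The only delicate part of the argument is the $\ell$-factorization above; the generating function manipulation is entirely routine.
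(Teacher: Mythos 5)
Your proposal is correct and follows essentially the same route as the paper: the same decomposition of the avoiders around the position of $n$ (the paper cites the classes $\{I_k\ominus\sigma\}$ and $\{I_{k-1}\oplus(1\ominus\sigma)\}$ from Adeniran and Pudwell, which are exactly your $\alpha\,n\,\beta$ forms), the same factorization $\ell(\sigma)=k!\,\ell(\beta')$, and the same recurrence $q_n=\sum_{k=1}^n k!\,q_{n-k}$. The only cosmetic difference is that you close with a generating-function extraction where the paper simply verifies the closed form by induction.
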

\begin{proof}
By Theorem 23 in \cite{AP}, $\Av_n(132, 213)=\{I_k\ominus\sigma\mid k\in[n],\sigma\in\Av_{n-k}(132, 213)\}$, so
\begin{align*}
\pk_n(132, 213)&=\sum_{k=1}^n\sum_{\sigma\in\Av_{n-k}(132, 213)}\ell(I_k\ominus\sigma)\\
&=\sum_{k=1}^n\sum_{\sigma\in\Av_{n-k}(132, 213)}k!\ell(\sigma)=\sum_{k=1}^nk!\pk_{n-k}(132, 213).
\end{align*}
The result then follows from induction on $n$.

By Theorem 23 in \cite{AP}, $\Av_n(213, 231)=\{I_{k-1}\oplus(1\ominus\sigma)\mid k\in[n],\sigma\in\Av_{n-k}(213, 231)\}$, so \begin{align*}
\pk_n(213, 231)&=\sum_{k=1}^n\sum_{\sigma\in\Av_{n-k}(213, 231)}\ell(I_{k-1}\oplus(1\ominus\sigma))\\
&=\sum_{k=1}^n\sum_{\sigma\in\Av_{n-k}(213, 231)}k!\ell(\sigma)=\sum_{k=1}^nk!\pk_{n-k}(213, 231).
\end{align*}
Again, the result follows from induction on $n$.
\end{proof}

\begin{theorem}\label{2.22}
$$\pk_n(132, 321)=n!+\sum_{\substack{a+b\leq n\\a,b\geq1}}\frac{n!}{\binom{a+b}{a}}.$$   
\end{theorem}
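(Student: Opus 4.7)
The plan is to apply Corollary~\ref{generalpkformula} using an explicit description of $\Av_n(132, 321)$ from \cite{AP}. I expect the relevant result there to give, or to easily yield, the decomposition
\[
\Av_n(132, 321) = \{I_n\} \cup \bigl\{(I_a \ominus I_b) \oplus I_{n-a-b} : a, b \geq 1, \ a+b \leq n\bigr\}.
\]
This can in any case be verified directly: if $\sigma \neq I_n$, let $s$ be the largest integer such that the final $s$ entries of $\sigma$ are exactly $n-s+1, n-s+2, \ldots, n$ in order. Then both 132- and 321-avoidance force the prefix $\sigma(1), \ldots, \sigma(n-s)$ to have the form $I_a \ominus I_b$, where $a$ is the position of the maximum of this prefix and $a, b \geq 1$; maximality of $s$ then gives the representation above, and the two families are clearly disjoint.

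The next step is to compute $\ell\bigl((I_a \ominus I_b) \oplus I_{n-a-b}\bigr)$ block by block. The first $a$ positions carry the values $b+1, \ldots, a+b$ in increasing order, so $\ell(i, \sigma) = i$ for $1 \leq i \leq a$, contributing $a!$. The next $b$ positions carry $1, \ldots, b$ in increasing order, with $\sigma(a) = a+b$ acting as a barrier, so $\ell(i, \sigma) = i - a$ for $a < i \leq a+b$, contributing $b!$. The last $n-a-b$ positions carry $a+b+1, \ldots, n$ in increasing order, and every preceding value is at most $a+b$, so $\ell(i, \sigma) = i$ for $a+b < i \leq n$, contributing $n!/(a+b)!$. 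Multiplying the three blocks gives
\[
\ell\bigl((I_a \ominus I_b) \oplus I_{n-a-b}\bigr) = \frac{n! \, a! \, b!}{(a+b)!} = \frac{n!}{\binom{a+b}{a}}.
\]

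Combining this with $\ell(I_n) = n!$ and applying Corollary~\ref{generalpkformula} immediately yields the claimed formula. The main conceptual step is the structural description of $\Av_n(132, 321)$; once that is in hand, the block-wise $\ell$-computation is routine and the identity $a!\,b!/(a+b)! = 1/\binom{a+b}{a}$ delivers the central binomial form without further work.
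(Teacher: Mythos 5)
Your proposal is correct and follows essentially the same route as the paper: it invokes the same description $\Av_n(132,321)=\{I_n\}\cup\{(I_a\ominus I_b)\oplus I_{n-a-b}\mid a,b\geq1,\ a+b\leq n\}$ (Theorem 24 in \cite{AP}) and applies Corollary \ref{generalpkformula} with the block-wise computation $\ell((I_a\ominus I_b)\oplus I_{n-a-b})=a!\,b!\,n!/(a+b)!$. The only difference is that you also sketch a direct verification of the structural description, which the paper simply cites.
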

\begin{proof}
By Theorem 24 in \cite{AP}, $\Av_n(132, 321)=\{I_n\}\cup\{(I_a\ominus I_b)\oplus I_{n-a-b}\mid a,b\geq 1, a+b\leq n\}$, so 
\begin{align*}
\pk_n(132, 321)&=\ell(I_n)+\sum_{a=1}^{n-1}\sum_{b=1}^{n-a}\ell((I_a\ominus I_b)\oplus I_{n-a-b})\\
&=n!+\sum_{a=1}^{n-1}\sum_{b=1}^{n-a}a!b!\frac{n!}{(a+b)!}=n!+\sum_{a=1}^{n-1}\sum_{b=1}^{n-a}\frac{n!}{\binom{a+b}{a}},    
\end{align*}
as claimed.
\end{proof}

\begin{theorem}\label{2.23}
$$\pk_n(213, 321)=n!+n!\sum_{k=1}^{n-1}\frac{k}{\binom{n}{k}}.$$   
\end{theorem}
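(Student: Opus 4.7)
The plan is to apply Corollary~\ref{generalpkformula} to a structural description of $\Av_n(213,321)$ analogous to the one used for $\Av_n(132,321)$ just above. A direct analysis of how $213$- and $321$-avoidance interact (or, more simply, the corresponding theorem in \cite{AP}) should yield
$$\Av_n(213,321)=\{I_n\}\cup\{I_a\oplus(I_b\ominus I_c)\mid a\geq 0,\ b,c\geq 1,\ a+b+c=n\}.$$
One can sanity-check this on small cases: the parameters $(a,b,c)=(1,1,1),(0,2,1),(0,1,2)$ give $132, 231, 312$, matching $\Av_3(213,321)=\{123,132,231,312\}$.

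Next I would compute $\ell(\sigma)$ for each such $\sigma$. The identity gives $\ell(I_n)=n!$. For $\sigma=I_a\oplus(I_b\ominus I_c)$, the one-line notation of $\sigma$ is
$$1,\,2,\,\ldots,\,a,\;a+c+1,\,a+c+2,\,\ldots,\,a+c+b,\;a+1,\,a+2,\,\ldots,\,a+c.$$
In the first two blocks, every entry exceeds all preceding entries, so $\ell(i,\sigma)=i$ there, contributing $a!\cdot(a+1)(a+2)\cdots(a+b)=(a+b)!$ to the product. In the third block the large entry $a+c+b$ sitting at position $a+b$ blocks any leftward extension past the block, so $\ell(a+b+j,\sigma)=j$ for $j=1,\ldots,c$, contributing $c!$. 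Hence $\ell(\sigma)=(a+b)!\,c!$.

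Summing over the parameter set and substituting $k=a+b$ (so $c=n-k$, with $k\in\{1,\ldots,n-1\}$, and for each $k$ there are exactly $k$ valid choices of $(a,b)$ with $a\geq 0$ and $b\geq 1$), Corollary~\ref{generalpkformula} yields
$$\pk_n(213,321)=n!+\sum_{k=1}^{n-1}k\cdot k!\,(n-k)!=n!+\sum_{k=1}^{n-1}\frac{n!\,k}{\binom{n}{k}},$$
matching the claim (reading each summand as $\frac{n!\cdot k}{\binom{n}{k}}$). There is no real obstacle once the characterization of $\Av_n(213,321)$ is in hand; everything else reduces to careful bookkeeping across the three blocks of $\sigma$.
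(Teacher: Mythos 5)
Your proposal is correct and follows essentially the same route as the paper: it invokes the same characterization $\Av_n(213,321)=\{I_n\}\cup\{I_{n-a-b}\oplus(I_a\ominus I_b)\mid a,b\geq1,\ a+b\leq n\}$ (Theorem 25 of \cite{AP}, just with relabelled parameters), computes $\ell(\sigma)=(a+b)!\,c!$ for each such permutation exactly as the paper does, and collects terms by $k=a+b$ to get $n!+n!\sum_{k=1}^{n-1}k/\binom{n}{k}$. You are also right that the displayed statement is missing the factor $n!$ in front of the sum; the paper's own proof ends with $n!+n!\sum_{k=1}^{n-1}\frac{k}{\binom{n}{k}}$, which agrees with your derivation and with the tabulated values.
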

\begin{proof}
By Theorem 25 in \cite{AP}, $\Av_n(213, 321)=\{I_n\}\cup\{I_{n-a-b}\oplus(I_a\ominus I_b)\mid a,b\geq 1, a+b\leq n\}$, so 
\begin{align*}
\pk_n(213, 321)&=\ell(I_n)+\sum_{a=1}^{n-1}\sum_{b=1}^{n-a}\ell(I_{n-a-b}\oplus(I_a\ominus I_b))\\
&=n!+\sum_{a=1}^{n-1}\sum_{b=1}^{n-a}(n-b)!b!=n!+\sum_{k=1}^{n-1}k(n-k)!k!=n!+n!\sum_{k=1}^{n-1}\frac{k}{\binom{n}{k}},
\end{align*}
as claimed.
\end{proof}

\begin{theorem}\label{2.24}
$\pk_n(213, 312)=\sum_{k=0}^{n-1}\binom{n-1}{k}(k+1)!$.   
\end{theorem}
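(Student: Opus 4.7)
The plan is to prove this via a bijective encoding of $\Av_n(213, 312)$ followed by a direct product evaluation of $\ell$. The central observation is that any $\sigma \in \Av_n(213, 312)$ must place its entry of value $1$ at position $1$ or position $n$: if $1$ were at some interior position $j$ with $1<j<n$, then for any $i<j<k$ the triple $\sigma(i), 1, \sigma(k)$ forms a $213$ pattern (when $\sigma(i)<\sigma(k)$) or a $312$ pattern (when $\sigma(i)>\sigma(k)$). Applying this recursively to the remaining block of $n-1$ consecutive positions with values $\{2, \cdots, n\}$ yields a bijection between $\Av_n(213, 312)$ and $\{L, R\}^{n-1}$: encode $\sigma$ by $(a_1, \cdots, a_{n-1})$ where $a_v \in \{L, R\}$ records whether value $v$ (for $v = 1, \cdots, n-1$) takes the leftmost or rightmost cell of the contiguous block available when it is placed; value $n$ then fills the unique remaining cell.

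Next, I would evaluate $\ell(\sigma)$ directly for $\sigma$ corresponding to $(a_1, \cdots, a_{n-1})$. Let $p_v = |\{u<v: a_u = L\}|$ and $k = p_n$. The claim is that the factor $\ell(\cdot, \sigma)$ at the position of value $v$ equals $p_v + 1$ when $a_v = L$ or when $v = n$, and equals $1$ when $a_v = R$ with $v < n$. The $L$-case follows because positions $1, \cdots, p_v$ are exactly the cells of earlier $L$-placements and hence hold values strictly less than $v$, so the stretch of values $\leq v$ ending at position $p_v + 1$ extends all the way to position $1$. The $R$-case follows because the position immediately to the left of $v$ lies inside the block still to be filled and is therefore eventually occupied by a later placement of value exceeding $v$, cutting the stretch to length $1$. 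Writing $S = \{v \in [n-1]: a_v = L\} = \{s_1 < \cdots < s_k\}$, we have $p_{s_j} = j - 1$, so the $L$-placements jointly contribute $\prod_{j=1}^k j = k!$, the $R$-placements each contribute $1$, and value $n$ contributes $p_n + 1 = k + 1$; multiplying, $\ell(\sigma) = (k+1)!$.

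Finally, summing over all $2^{n-1}$ sequences using Corollary \ref{generalpkformula} and grouping by $k = |S|$,
$$\pk_n(213, 312) = \sum_{(a_1, \cdots, a_{n-1}) \in \{L, R\}^{n-1}} (|S|+1)! = \sum_{k=0}^{n-1} \binom{n-1}{k}(k+1)!,$$
which is the claimed formula. The main obstacle I anticipate is the block-tracking in the middle step: rigorously verifying, for each $v$, exactly which values end up at positions to the left of $v$ in the final $\sigma$, so as to correctly identify the factor $\ell(\cdot, \sigma)$ at the position of $v$; beyond this, the argument is routine.
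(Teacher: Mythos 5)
Your proposal is correct and follows essentially the same route as the paper: the paper cites the classification $\Av_n(213,312)=\{I_SnJ_{[n-1]\setminus S}\mid S\subset[n-1]\}$ from Adeniran--Pudwell and then computes $\ell(I_SnJ_{[n-1]\setminus S})=(|S|+1)!$, which is exactly the permutation family and the $\ell$-evaluation your $\{L,R\}^{n-1}$ encoding produces (the $L$-values form the increasing prefix $I_S$, the $R$-values the decreasing suffix $J_{[n-1]\setminus S}$). The only difference is that you prove the classification from scratch via the ``value $1$ sits at an end'' recursion instead of citing it, which makes your argument self-contained but otherwise identical in substance.
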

\begin{proof}
By Theorem 26 in \cite{AP}, $\Av_n(213, 312)=\{I_SnJ_{[n-1]\setminus S}\mid S\subset[n-1]\}$, so $\pk_n(213, 312)=\sum_{S\subset[n-1]}\ell(I_SnJ_{[n-1]\setminus S})=\sum_{S\subset[n-1]}(|S|+1)!=\sum_{k=0}^{n-1}\binom{n-1}{k}(k+1)!$. 
\end{proof}

\begin{theorem}\label{2.25}
$$\pk_n(231, 321)=\sum_{k=1}^{n+1}(-1)^{k+1}\sum_{\substack{a_1+\cdots+a_k=n+1\\a_1,\cdots,a_k\geq1}}a_1!\cdots a_k!.$$   
\end{theorem}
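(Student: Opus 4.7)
The plan is to mirror the strategy of the preceding cases in this subsection: take a structural description of $\Av_n(231,321)$ from \cite{AP}, compute the weight $\ell(\pi)$ of a typical element via Corollary \ref{generalpkformula}, and then solve the resulting recurrence. Based on the analogous structure theorems already cited in this section, the description should read
\[
\Av_n(231,321)=\{\sigma\oplus(1\ominus I_{n-k})\mid k\in[n],\ \sigma\in\Av_{k-1}(231,321)\},
\]
which corresponds to placing a $(231,321)$-avoider on the values $\{1,\dots,k-1\}$ in positions $1,\dots,k-1$, the maximum $n$ in position $k$, and the increasing run $k,k+1,\dots,n-1$ in positions $k+1,\dots,n$. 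This decomposition is easy to verify directly: an inversion in the suffix would combine with $n$ to give a $321$, while a value on the right smaller than one on the left would combine with $n$ to give a $231$.

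For such $\pi$, unwinding the definition of $\ell$ gives $\ell(i,\pi)=\ell(i,\sigma)$ for $i<k$; $\ell(k,\pi)=k$, since $\pi(k)=n$ dominates every earlier entry; and $\ell(k+j,\pi)=j$ for $1\leq j\leq n-k$, since the sorted tail $k,k+1,\dots,k+j-1$ is blocked upon reaching $\pi(k)=n$. Multiplying, $\ell(\pi)=k\,(n-k)!\,\ell(\sigma)$, and Corollary \ref{generalpkformula} produces the recurrence
\[
p_n:=\pk_n(231,321)=\sum_{k=1}^{n}k\,(n-k)!\,p_{k-1},\qquad p_0=1.
\]

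To reach the claimed closed form, I would first establish the auxiliary identity $\sum_{k=0}^{n}(n-k)!\,p_k=(n+1)!$. Writing $S_n$ for the left-hand side and substituting the recurrence in the form $p_n=\sum_{k=0}^{n-1}(k+1)(n-1-k)!\,p_k$, the coefficient of each $p_k$ collects as $(n-k)+(k+1)=n+1$, so $S_n=(n+1)S_{n-1}$; combined with $S_0=1$, induction gives $S_n=(n+1)!$. Now set $P(x)=\sum_{n\geq0}p_nx^n$, $A(x)=\sum_{m\geq1}m!\,x^m$, and $B(x)=1+A(x)$. The identity translates into $P(x)B(x)=A(x)/x$, so
\[
P(x)=\frac{1}{x}\cdot\frac{A(x)}{1+A(x)}=\frac{1}{x}\sum_{k\geq1}(-1)^{k+1}A(x)^k.
\]
Expanding each $A(x)^k$ by compositions $a_1+\cdots+a_k=m$ with $a_i\geq 1$ and extracting the coefficient of $x^n$ in $P(x)$ then yields the formula in the statement. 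The main obstacle is the $\ell$-computation in step two and the crisp regrouping that collapses $S_n$ to $(n+1)S_{n-1}$; the rest is a routine application of the recurrence and generating-function machinery already used in analogous proofs above.
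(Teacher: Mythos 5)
Your proposal is correct and takes essentially the same route as the paper's proof: the identical decomposition $\Av_n(231,321)=\{\sigma\oplus(1\ominus I_{k-1})\}$ (yours is the same up to reindexing $k\mapsto n-k+1$, and the paper cites it from Adeniran--Pudwell rather than re-deriving it), the same weight computation giving $\ell(\pi)=k\,(n-k)!\,\ell(\sigma)$, the same recurrence, and the same key identity $\sum_{k=0}^{n}(n-k)!\,p_k=(n+1)!$ proved by the same regrouping $(k+1)+(n-k)=n+1$. The only difference is cosmetic: the paper concludes by induction from that identity, while you invert the factorial convolution via $P(x)\bigl(1+A(x)\bigr)=A(x)/x$ and expand the geometric series.
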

\begin{proof}
By Theorem 27 in \cite{AP}, $\Av_n(231, 321)=\{\sigma\oplus(1\ominus I_{k-1})\mid k\in[n], \sigma\in\Av_{n-k}(231, 321)\}$, so 
\begin{align*}
\pk_n(231, 321)&=\sum_{k=1}^n\sum_{\sigma\in\Av_{n-k}(231, 321)}\ell(\sigma\oplus(1\ominus I_{k-1}))=\sum_{k=1}^n\sum_{\sigma\in\Av_{n-k}(231, 321)}\ell(\sigma)(n-k+1)(k-1)!\\
&=\sum_{k=1}^n\pk_{n-k}(231, 321)(n-k+1)(k-1)!=\sum_{k=0}^{n-1}\pk_k(231, 321)(k+1)(n-k-1)!.    
\end{align*}
For brevity, let $p_n=\pk_n(231, 321)$. We use induction on $n$ to show that $p_n=(n+1)!-\sum_{k=0}^{n-1}p_k(n-k)!$ for all $n\geq1$. The case when $n=1$ is easily verified. For $n\geq 2$, by the recurrence relation above and induction hypothesis, we have
\begin{align*}
p_n+\sum_{k=0}^{n-1}p_k(n-k)!&=\sum_{k=0}^{n-1}p_k(k+1)(n-k-1)!+\sum_{k=0}^{n-1}p_k(n-k)!\\
&=\sum_{k=0}^{n-1}p_k(n-k-1)!(k+1+n-k)=(n+1)\sum_{k=0}^{n-1}p_k(n-k-1)!\\
&=(n+1)\left(p_{n-1}+\sum_{k=0}^{n-2}p_k(n-k-1)!\right)=(n+1)n!=(n+1)!,
\end{align*}
as required. The result then follows from induction on $n$.
\end{proof}

\begin{theorem}\label{2.26}
$$\pk_n(312, 321)=n!\sum_{k=1}^n\frac{\binom{n-1}{k-1}}{k!}.$$   
\end{theorem}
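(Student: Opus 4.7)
The plan is to follow the template used elsewhere in this section: invoke a recursive description of $\Av_n(312, 321)$ from \cite{AP}, apply Corollary \ref{generalpkformula} to reduce to a recurrence for $p_n := \pk_n(312, 321)$, and solve the recurrence via an exponential generating function.

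By Theorem 28 in \cite{AP} (or by iterating the first-block recursion it provides), every $\sigma \in \Av_n(312, 321)$ can be written uniquely as $\sigma = \sigma' \oplus (I_{k-1} \ominus 1)$ with $k \in [n]$ and $\sigma' \in \Av_{n-k}(312, 321)$; the last $k$ positions of $\sigma$ then hold the values $n-k+1, \ldots, n$ in the order $(n-k+2, n-k+3, \ldots, n, n-k+1)$. My first step is to compute $\ell$ on this decomposition. Since every value in the last block strictly exceeds every value in $\sigma'$, I have $\ell(i,\sigma) = \ell(i,\sigma')$ for $i \leq n-k$; within the last block, positions $n-k+1, \ldots, n-1$ carry strictly increasing values and so contribute $\prod_{j=1}^{k-1}(n-k+j)$; and the final position $\sigma(n) = n-k+1$ contributes $1$ when $k \geq 2$ but $n$ when $k = 1$. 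Combining these,
$$\ell\bigl(\sigma' \oplus (I_{k-1} \ominus 1)\bigr) = \begin{cases} n \cdot \ell(\sigma') & k = 1, \\[2pt] \dfrac{(n-1)!}{(n-k)!} \cdot \ell(\sigma') & k \geq 2. \end{cases}$$

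Summing over $\sigma \in \Av_n(312, 321)$ via Corollary \ref{generalpkformula} gives the recurrence
$$p_n = n\,p_{n-1} + (n-1)! \sum_{j=0}^{n-2} \frac{p_j}{j!}.$$
Letting $P(x) = \sum_{n \geq 0} p_n x^n/n!$, I plan to translate this into an ODE. Rewriting the recurrence as $n\bigl(p_n/n! - p_{n-1}/(n-1)!\bigr) = \sum_{j=0}^{n-2} p_j/j!$ and applying the standard generating-function manipulations (the left side yields $x(1-x)P'(x) - xP(x)$ while the right yields $x^2 P(x)/(1-x)$) should produce
$$(1-x)^2 P'(x) = P(x).$$
Separating variables with $P(0) = 1$ gives $P(x) = e^{x/(1-x)}$. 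Expanding
$$e^{x/(1-x)} = \sum_{k \geq 0} \frac{1}{k!}\left(\frac{x}{1-x}\right)^k = \sum_{k \geq 0} \frac{x^k}{k!(1-x)^k}$$
and using $[x^n]\,x^k(1-x)^{-k} = \binom{n-1}{k-1}$ for $n \geq k \geq 1$ to extract the coefficient of $x^n$ recovers the claimed closed form for $p_n$.

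The only nonroutine step will be the generating-function manipulation producing the ODE $(1-x)^2 P'(x) = P(x)$; the $\ell$ computation and the final coefficient extraction are both mechanical.
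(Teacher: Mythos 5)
Your proposal is correct and follows essentially the same route as the paper: the same decomposition $\sigma = \sigma' \oplus (I_{k-1}\ominus 1)$ from Theorem 28 of \cite{AP}, the same full-history recurrence, the same ODE $(1-x)^2P'(x)=P(x)$ for the exponential generating function, the solution $P(x)=e^{x/(1-x)}$, and the same coefficient extraction. The only cosmetic difference is that the paper first reduces the full-history recurrence to the three-term recurrence $p_{n+1}=(2n+1)p_n-n(n-1)p_{n-1}$ before passing to the ODE, whereas you translate the full-history recurrence directly; incidentally, your initial condition $P(0)=1$ is the correct one (the paper's ``$P(0)=0$'' is a typo).
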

\begin{proof}
By Theorem 28 in \cite{AP}, $\Av_n(312, 321)=\{\sigma\oplus(I_{k-1}\ominus1)\mid k\in[n],\sigma\in\Av_{n-k}(312, 321)\}$, so 
\begin{align*}
\pk_n(312, 321)&=\sum_{k=1}^n\sum_{\sigma\in\Av_{n-k}(312, 321)}\ell(\sigma\oplus(I_{k-1}\ominus1))\\
&=\sum_{\sigma\in\Av_{n-1}(312, 321)}n\ell(\sigma)+\sum_{k=2}^n\sum_{\sigma\in\Av_{n-k}(312, 321)}\ell(\sigma)\frac{(n-1)!}{(n-k)!}\\
&=n\pk_{n-1}(312, 321)+\sum_{k=2}^n\pk_{n-k}(312, 321)\frac{(n-1)!}{(n-k)!}.    
\end{align*}
It follows that
\begin{align*}
\pk_{n+1}(312, 321)&=(n+1)\pk_n(312, 321)+\sum_{k=2}^{n+1}\pk_{n+1-k}(312, 321)\frac{n!}{(n+1-k)!}\\
&=(n+1)\pk_n(312, 321)+n\pk_{n-1}(312, 321)+n\sum_{k=2}^{n}\pk_{n-k}(312, 321)\frac{(n-1)!}{(n-k)!}\\
&=(2n+1)\pk_n(312, 321)-n(n-1)\pk_{n-1}(312, 321).
\end{align*}
For simplicity, let $p_n=\pk_n(312, 321)$, and consider the exponential generating function $P(x)=\sum_{n\geq0}\frac{p_n}{n!}x^n$. From the recurrence relation above and $p_0=p_1=1$, $p_2=3$, we have that 
\begin{align*}
P'(x)&=\sum_{n\geq0}\frac{p_{n+1}}{n!}x^n=1+3x+\sum_{n\geq2}\frac{p_{n+1}}{n!}x^n\\
&=1+3x+\sum_{n\geq2}\frac{(2n+1)p_n-n(n-1)p_{n-1}}{n!}x^n\\
&=1+3x+2x\sum_{n\geq1}\frac{p_{n+1}}{n!}x^n+\sum_{n\geq 2}\frac{p_n}{n!}x^n-x^2\sum_{n\geq0}\frac{p_{n+1}}{n!}x^n\\
&=1+3x+2x(P'(x)-1)+(P(x)-1-x)-x^2P'(x).
\end{align*}
This gives the differential equation $(x-1)^2P'(x)=P(x)$. Solving this and using $P(0)=1$, we get $P(x)=e^{\frac x{1-x}}$. Thus, 
\begin{align*}
\pk_n(312, 321)&=p_n=n![x^n]P(x)=n![x^n]e^{\frac x{1-x}}\\
&=n![x^n]\sum_{k\geq0}\frac{x^k}{k!(1-x)^k}=n!\sum_{k=1}^n[x^{n-k}]\frac1{k!(1-x)^k}=n!\sum_{k=1}^n\frac{\binom{n-1}{k-1}}{k!},
\end{align*}
as required.
\end{proof}

\subsection{Avoiding one pattern}
\begin{center}
\begin{tabular}{|c|c|c|c|}
\hline  Pattern $\sigma$  & $\pk_n(\sigma)$, $1\leq n\leq8$ & OEIS & Result \\\hline
  132   & \multirow{2}{*}{1, 3, 14, 85, 621, 5236, 49680, 521721} & \multirow{2}{*}{A088716} & \multirow{2}{*}{Theorem \ref{132,231}}\\
  231   & & &\\\hline
  123   & 1, 3, 10, 37, 146, 602, 2563, 11181 & A109081 & Theorem \ref{123}\\\hline
  213   & 1, 3, 13, 69, 421, 2867, 21477, 175769 & A088368 & Theorem \ref{213}\\\hline
  312   & 1, 3, 14, 87, 669, 6098, 64050, 759817 & A132624 & Theorem \ref{312}\\\hline
  321   & 1, 3, 15, 102, 860, 8553, 97331, 1241900 & New & Theorem \ref{321}\\\hline
\end{tabular}
\end{center}

\begin{theorem}\label{132,231}
$\pk_n(132)=\pk_n(231)=p_n$, where $p_0=1$ and $p_n=\sum_{k=1}^nkp_{k-1}p_{n-k}$ for all $n\geq1$. It follows that the generating function $P(x)=\sum_{n\geq 0}p_nx^n$ satisfies the differential equation $x^2P(x)P'(x)+x(P(x))^2-P(x)+1=0$.
\end{theorem}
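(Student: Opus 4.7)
My plan is to apply Corollary~\ref{generalpkformula} together with the standard decomposition of $132$- and $231$-avoiding permutations by the position of the maximum.

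First, I would establish the structural decomposition. If $\rho\in\Av_n(132)$ and $\rho(k)=n$, then any value smaller than some left-side entry appearing to the right of position $k$ would create a $132$ pattern with $\rho(k)$ as the ``$3$'', so positions $1,\dots,k-1$ must carry exactly the values $\{n-k+1,\dots,n-1\}$ while positions $k+1,\dots,n$ carry $\{1,\dots,n-k\}$; after order-preserving normalisation the two blocks yield subpermutations $\sigma\in\Av_{k-1}(132)$ and $\tau\in\Av_{n-k}(132)$. The analogous argument for $\Av_n(231)$ (with $n$ now forced to be the ``$3$'' of a hypothetical pattern) shows that positions $1,\dots,k-1$ carry $\{1,\dots,k-1\}$ and positions $k+1,\dots,n$ carry $\{k,\dots,n-1\}$, again decomposing into subpermutations $\sigma\in\Av_{k-1}(231)$ and $\tau\in\Av_{n-k}(231)$.

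Next, I would prove the key factorisation $\ell(\rho)=k\cdot\ell(\sigma)\cdot\ell(\tau)$ in both cases. The point is that $\rho(k)=n$ acts as a two-sided barrier. For $i<k$, the window defining $\ell(i,\rho)$ stays entirely inside the left block, and since $\ell(i,\cdot)$ depends only on relative order of $\rho(i),\rho(i-1),\dots$, we get $\ell(i,\rho)=\ell(i,\sigma)$. For $i=k$, every value at positions $1,\dots,k$ is $\leq n=\rho(k)$, so $\ell(k,\rho)=k$. For $i>k$, in both the $132$- and $231$-avoiding cases $\rho(i)<n=\rho(k)$, so the window cannot cross position $k$; it stays in the right block and relative-order invariance gives $\ell(i,\rho)=\ell(i-k,\tau)$. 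Multiplying over $i\in[n]$ yields the factorisation.

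Combining these two steps with Corollary~\ref{generalpkformula} gives, for both $\pi\in\{132,231\}$,
$$\pk_n(\pi)=\sum_{k=1}^n\sum_{\sigma\in\Av_{k-1}(\pi)}\sum_{\tau\in\Av_{n-k}(\pi)}k\,\ell(\sigma)\ell(\tau)=\sum_{k=1}^nk\,\pk_{k-1}(\pi)\,\pk_{n-k}(\pi),$$
which, together with the initial value $\pk_0(\pi)=1$, identifies both sequences with $p_n$. For the generating function, I would multiply the recurrence by $x^n$, sum over $n\geq 1$, and recognise $\sum_{k\geq 1}kp_{k-1}x^{k-1}=(xP(x))'=P(x)+xP'(x)$. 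The right-hand side then becomes $xP(x)\bigl(P(x)+xP'(x)\bigr)$, so $P(x)-1=x\bigl(P(x)\bigr)^2+x^2P(x)P'(x)$, which is the stated ODE. The only real obstacle is the $\ell$-factorisation; once the barrier property of $\rho(k)=n$ is isolated, the rest is bookkeeping.
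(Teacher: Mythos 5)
Your proposal is correct and follows essentially the same route as the paper: decompose $\Av_n(132)$ and $\Av_n(231)$ by the position of $n$ (the paper writes these as $(\sigma_1\oplus 1)\ominus\sigma_2$ and $\sigma_1\oplus(1\ominus\sigma_2)$), use the factorisation $\ell(\rho)=k\,\ell(\sigma)\ell(\tau)$ with Corollary \ref{generalpkformula} to get the recurrence $p_n=\sum_{k=1}^nkp_{k-1}p_{n-k}$, and translate it into the stated ODE via $(xP(x))'$. Your explicit justification of the $\ell$-factorisation by the barrier property of $\rho(k)=n$ is a detail the paper leaves implicit, but the argument is the same.
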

\begin{proof}
For any $\sigma\in\Av_n(132)$, if $\sigma(i)=n$, then $\sigma(j)>\sigma(j')$ for all $1\leq j<i<j'\leq n$. From this, it follows that $\Av_n(132)=\{(\sigma_1\oplus1)\ominus\sigma_2\mid k\in[n],\sigma_1\in\Av_{k-1}(132),\sigma_2\in\Av_{n-k}(132)\}$. Therefore, 
\begin{align*}
\pk_n(132)&=\sum_{k=1}^n\sum_{\sigma_1\in\Av_{k-1}(132)}\sum_{\sigma_2\in\Av_{n-k}(132)}\ell((\sigma_1\oplus1)\ominus\sigma_2)\\
&=\sum_{k=1}^n\sum_{\sigma_1\in\Av_{k-1}(132)}\sum_{\sigma_2\in\Av_{n-k}(132)}k\ell(\sigma_1)\ell(\sigma_2)=\sum_{k=1}^nk\pk_{k-1}(132)\pk_{n-k}(132)    
\end{align*}

Similarly, for any $\sigma\in\Av_n(231)$, if $\sigma(i)=n$, then $\sigma(j)<\sigma(j')$ for all $1\leq j<i<j'\leq n$. From this, it follows that $\Av_n(231)=\{\sigma_1\oplus(1\ominus\sigma_2)\mid k\in[n],\sigma_1\in\Av_{k-1}(231),\sigma_2\in\Av_{n-k}(231)\}$. Therefore,
\begin{align*}
\pk_n(231)&=\sum_{k=1}^n\sum_{\sigma_1\in\Av_{k-1}(231)}\sum_{\sigma_2\in\Av_{n-k}(231)}\ell(\sigma_1\oplus(1\ominus\sigma_2))\\
&=\sum_{k=1}^n\sum_{\sigma_1\in\Av_{k-1}(231)}\sum_{\sigma_2\in\Av_{n-k}(231)}k\ell(\sigma_1)\ell(\sigma_2)=\sum_{k=1}^nk\pk_{k-1}(231)\pk_{n-k}(231)    
\end{align*}

Since $\pk_n(132)$ and $\pk_n(231)$ satisfy the same recurrence relation, and it is easy to verify that their initial values agree, their values must coincide for all $n\geq0$. Let their common values be $p_n$, it follows from the recurrence relation that the generating function $P(x)=\sum_{n\geq 0}p_nx^n$ satisfies the differential equation
\begin{align*}
P(x)&=1+\sum_{n\geq1}p_nx^n=1+\sum_{n\geq1}\sum_{k=1}^nkp_{k-1}p_{n-k}x^n\\
&=1+x\left(\sum_{a_1\geq0}(a_1+1)p_{a_1}x^{a_1}\right)\left(\sum_{a_2\geq0}p_{a_2}x^{a_2}\right)=1+x(xP(x))'P(x),   
\end{align*}
which is equivalent to the differential equation $x^2P(x)P'(x)+x(P(x))^2-P(x)+1=0$.
\end{proof}

The differential equation we obtained in Theorem \ref{132,231} is an example of a Chini's Equation, which has no known explicit solution. It remains open whether we can find a more explicit formula for $p_n$.

The next few results require some preparation. For each $\sigma\in\{123, 213, 312, 321\}$, we first use a bijection of Kratthenthaler in \cite{K} or its variants, which will be described in the proofs later, between $\Av_n(\sigma)$ and the set $\mathcal{C}_n$ of Catalan paths of length $2n$ to express $\pk_n(\sigma)$ as a sum over $\mathcal{C}_n$. For each $C\in\mathcal{C}_n$, let $\mathbf{u}(C)$ be the sequence of integers recording in order the lengths of each block of consecutive up-steps in $C$. For example, for the Catalan path $C=UDUUDUDD\in\mathcal{C}_4$, $\mathbf{u}(C)=(1,2,1)$. All terms in the sum over all $C\in\mathcal{C}_n$ will be expressions in terms of entries of $\mathbf{u}(C)$.

For $k\in[n]$, let $\mathcal{C}_{n,k}$ be the set of Catalan paths of length $2n$, which begins with a block of $k$ up-steps, or equivalently, $\mathcal{C}_{n,k}=\{C\in\mathcal{C}_n\mid \mathbf{u}(C)_1=k\}$. For each $C\in\mathcal{C}_{n,k}$, $C$ has a unique decomposition $C=U_1\cdots U_kD_1C_1\cdots D_kC_k$, where $U_1,\cdots,U_k$ are the first $k$ consecutive up-steps in $C$, $D_1,\cdots,D_k$ are down-steps, and $C_1,\cdots,C_k$ are Catalan paths, possibly of length 0. This decomposition is unique because for all $i\in[k]$, $D_i$ is the first down-step in $C$ that goes down from height $k-i+1$ to height $k-i$. We call this the \textit{canonical decomposition} of $C$. Also, note that $\mathbf{u}(C)$ is obtained by attaching $\mathbf{u}(C_1),\cdots,\mathbf{u}(C_k)$ together in order, and adding an entry of $k$ to the front. Moreover, for any $n\geq k\geq 1$ and any Catalan paths $C_1,\cdots,C_k$, possibly of length 0, with total length $2(n-k)$, $C=U_1\cdots U_kD_1C_1\cdots D_kC_k$ is a Catalan path in $\mathcal{C}_{n,k}$. Hence, the map $C\mapsto(C_1,\cdots,C_k)$ is a bijection.

Finally, to extract coefficients from generating functions satisfying certain functional equations, the well-known Lagrange's Implicit Function Theorem below is very useful, and a proof of it can be found, for example, in \cite{GJ}.
\begin{lemma}[Lagrange's Implicit Function Theorem]\label{lift}
Let $P(x), \phi(x), \psi(x)$ be formal power series with coefficients in $\mathbb{C}$, such that $[x^0]\phi(x)\not=0$ and $P(x)=x\phi(P(x))$. Then for $n\geq 1$,
$$[x^n]\psi(P(x))=\frac1{n}[x^{n-1}]\psi'(x)(\phi(x))^n.$$
In particular, if we set $\psi(x)=x$, then $[x^0]P(x)=0$ and for $n\geq 1$,
$$[x^n]P(x)=\frac1{n}[x^{n-1}](\phi(x))^n.$$
\end{lemma}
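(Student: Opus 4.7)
The plan is to prove the identity using formal residue calculus on Laurent series. Write $\mathrm{res}(f):=[t^{-1}]f(t)$ for the coefficient of $t^{-1}$ in a formal Laurent series $f(t)$. Since $[x^0]\phi(x)\neq 0$, the series $Q(y):=y/\phi(y)$ is a well-defined formal power series with $Q(0)=0$ and $Q'(0)=1/\phi(0)\neq 0$, so $Q$ admits a compositional inverse. Rewriting $P(x)=x\phi(P(x))$ as $Q(P(x))=x$ identifies $Q$ as precisely that compositional inverse; in particular $P(0)=0$, establishing the auxiliary claim $[x^0]P(x)=0$.

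The main tool is the change of variables formula for formal residues: for any formal Laurent series $f$, one has $\mathrm{res}(f(x))=\mathrm{res}(f(Q(y))Q'(y))$. This is a standard fact verified monomial-by-monomial on $f(x)=x^k$: when $k\neq -1$, $Q(y)^kQ'(y)=\frac{d}{dy}\bigl(Q(y)^{k+1}/(k+1)\bigr)$ is a formal derivative and hence has zero residue; when $k=-1$, writing $Q(y)=yh(y)$ with $h(0)\neq 0$ gives $\mathrm{res}(Q'(y)/Q(y))=\mathrm{res}\bigl(1/y+h'(y)/h(y)\bigr)=1$.

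Applying this with $f(x)=x^{-n-1}\psi(P(x))$, and substituting $Q(y)=y/\phi(y)$ together with $Q'(y)=(\phi(y)-y\phi'(y))/\phi(y)^2$, yields
\begin{align*}
[x^n]\psi(P(x)) &= \mathrm{res}\bigl(Q(y)^{-n-1}\psi(y)Q'(y)\bigr)\\
&= [y^n]\bigl(\phi(y)^n\psi(y)\bigr)-[y^{n-1}]\bigl(\phi(y)^{n-1}\phi'(y)\psi(y)\bigr).
\end{align*}

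To convert this to the claimed form, I would invoke the product rule $\frac{d}{dy}(\phi(y)^n\psi(y))=n\phi(y)^{n-1}\phi'(y)\psi(y)+\phi(y)^n\psi'(y)$. Extracting the coefficient of $y^{n-1}$ from both sides and using $[y^{n-1}]g'(y)=n[y^n]g(y)$ on the left gives $n[y^n](\phi(y)^n\psi(y))-n[y^{n-1}](\phi(y)^{n-1}\phi'(y)\psi(y))=[y^{n-1}](\phi(y)^n\psi'(y))$. Dividing by $n$ and substituting into the display above produces $[x^n]\psi(P(x))=\frac{1}{n}[x^{n-1}]\psi'(x)\phi(x)^n$, as required; the ``in particular'' formula for $[x^n]P(x)$ then follows by specialising to $\psi(x)=x$. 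The main obstacle is justifying the change-of-variables formula for residues in the purely formal setting, but as indicated above it reduces to a routine monomial computation.
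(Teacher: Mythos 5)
The paper does not prove this lemma itself --- it defers to the reference [GJ] --- and your formal-residue argument is exactly the standard proof given there; it is correct, including the monomial-by-monomial verification of the change-of-variables formula for residues and the integration-by-parts step converting $[y^n](\phi^n\psi)-[y^{n-1}](\phi^{n-1}\phi'\psi)$ into $\frac1n[y^{n-1}](\phi^n\psi')$. Note that in the last step you rightly specialise to $\psi(x)=x$ rather than $\psi(x)=1$ as the lemma's statement literally says; the statement contains a typo there, since $\psi(x)=1$ would give $\psi'=0$ and a vacuous conclusion.
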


\begin{theorem}\label{123}
$$\pk_n(123)=\frac1{n+1}\sum_{k=1}^n\binom{n+1}{k}\binom{n+k-1}{2k-1}.$$
\end{theorem}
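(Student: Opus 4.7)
The plan is to reduce $\pk_n(123)$ to a weighted sum over Catalan paths, derive a functional equation from the canonical Catalan decomposition, and extract the coefficient via Lagrange inversion.

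First, I would set up a bijection $\Phi : \Av_n(123) \to \mathcal{C}_n$ (a variant of Krattenthaler's) with the property that $\ell(\sigma) = \prod_j w(\Phi(\sigma))_j$ for every $\sigma$. Combined with Corollary \ref{generalpkformula}, this gives $\pk_n(123) = \sum_{C \in \mathcal{C}_n} \prod_j w(C)_j$. Then, using the canonical decomposition of each $C \in \mathcal{C}_{n,k}$ as $C = U^k D_1 C_1 \cdots D_k C_k$, the concatenation $w(C) = (k, w(C_1), \ldots, w(C_k))$ factors the weight as $\prod_j w(C)_j = k \prod_{i=1}^k \prod_j w(C_i)_j$. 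Summing over all Catalan paths and letting $G(x) = \sum_{n \geq 0} \pk_n(123) x^n$ yields
$$G(x) - 1 \;=\; \sum_{k \geq 1} k \bigl(xG(x)\bigr)^k \;=\; \frac{xG(x)}{(1 - xG(x))^2}.$$

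Setting $H(x) = xG(x)$, this rearranges to the implicit equation $H(x) = x \phi(H(x))$ with
$$\phi(y) \;=\; 1 + \frac{y}{(1-y)^2} \;=\; \frac{1 - y + y^2}{(1-y)^2}.$$
By Lagrange inversion (Lemma \ref{lift}), $\pk_n(123) = [x^{n+1}] H(x) = \tfrac{1}{n+1} [y^n] \phi(y)^{n+1}$. Applying the binomial theorem to the first expression for $\phi$ gives $\phi(y)^{n+1} = \sum_{k=0}^{n+1} \binom{n+1}{k} y^k (1-y)^{-2k}$, and extracting $[y^{n-k}] (1-y)^{-2k} = \binom{n+k-1}{n-k} = \binom{n+k-1}{2k-1}$ (the $k = 0$ and $k > n$ terms vanish) yields the claimed formula.

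The principal obstacle is constructing $\Phi$ and verifying $\ell(\sigma) = \prod_j w(\Phi(\sigma))_j$. The subtlety is that although a $123$-avoider has a simple ascent structure (each maximal ascending run has length at most two), the statistic $\ell(i, \sigma)$ can reach past the ascent run containing position $i$---for instance, $\sigma = 2143$ has $\ell(3, \sigma) = 3$ even though its longest ascent run ending at position $3$ is $(1,4)$---so $\Phi$ cannot be read off locally from $\sigma$ and must be constructed recursively, using the position of $n$ in $\sigma$ as the length of the initial up-block of $\Phi(\sigma)$ and decomposing the remaining data to feed into the sub-paths $C_1, \ldots, C_k$ in a way that preserves the product of $\ell$-values.
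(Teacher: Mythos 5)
Your proposal is correct and follows essentially the same route as the paper: reduce to the weighted sum $\sum_{C\in\mathcal{C}_n}\prod_i w(C)_i$ via a weight-preserving bijection, derive $P(x)=1+xP(x)(1-xP(x))^{-2}$ from the canonical decomposition of Catalan paths, and finish with Lagrange inversion and the binomial theorem. The bijection you flag as the principal obstacle is exactly Krattenthaler's: decompose $\sigma=s_1x_1\cdots s_kx_k$ by its right-to-left maxima $x_1>\cdots>x_k$, observe that each segment $s_i$ is decreasing with all entries below $x_i$ and below the preceding $x_{i-1}$, so that $\ell(\sigma)=\prod_i(|s_i|+1)$ with $|s_i|+1=w(\Omega_n(\sigma))_i$; this handles your $2143$ example, where the factor $3$ arises from the right-to-left maximum $4$ at position $3$ rather than from an ascent run.
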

\begin{proof}
For simplicity, let $p_n=\pk_n(123)$, and let $P(x)=\sum_{n\geq0}p_nx^n$ be its generating function.

In \cite{K}, Krattenthaler showed that the map $\Omega_n:\Av_n(123)\to\mathcal{C}_n$ described below is a bijection. For each permutation $\sigma=\sigma(1)\cdots\sigma(n)\in\Av_n(123)$ and $i\in[n]$, we say that $\sigma(i)$ is a \textit{right-to-left maximum} if $\sigma(i)>\sigma(j)$ for all $i<j\leq n$. Then $\sigma$ uniquely decomposes as $\sigma=s_1x_1\cdots s_kx_k$, where $x_1,\cdots,x_k$ are the right-to-left maxima of $\sigma$, and $s_1,\cdots,s_k$ are possibly empty segments. Let $\Omega_n(\sigma)$ be the Catalan path of length $2n$ obtained by, for each $i\in[k]$ in order, drawing a block of $|s_i|+1$ consecutive up-steps followed by a block of $x_i-x_{i+1}$ consecutive down-steps, where $|s_i|$ is the length of the segment $s_i$ and we view $x_{k+1}=0$.  

Note that from the definition of right-to-left maxima, $x_1>\cdots>x_k$, $x_i$ is larger than every entry in $s_{i+1}$ for all $i\in[k-1]$, and $x_i$ is also larger than every entry in $s_i$ for all $i\in[k]$. Since $\sigma$ avoids the pattern 123, each segment $s_i$ is decreasing. Thus, we have $\ell(\sigma)=\prod_{i=1}^k(|s_i|+1)$. Moreover, note that $|s_i|+1=\mathbf{u}(\Omega_n(\sigma))_i$. Therefore, since $\Omega_n:\Av_n(123)\to\mathcal{C}_n$ is a bijection, we have
$$p_n=\pk_n(123)=\sum_{\sigma\in\Av_n(123)}\ell(\sigma)=\sum_{C\in\mathcal{C}_n}\prod_{i=1}^{|\mathbf{u}(C)|}\mathbf{u}(C)_i.$$

Let $W(C)=\prod_{i=1}^{|\mathbf{u}(C)|}\mathbf{u}(C)_i$. For all $k\in[n]$ and $C\in\mathcal{C}_{n,k}$, let $C=U_1\cdots U_kD_1C_1\cdots D_kC_k$ be the canonical decomposition of $C$. Then we have $W(C)=k\prod_{j=1}^kW(C_j)$. Moreover, recall that the map $C\mapsto(C_1,\cdots,C_k)$ is a bijection, so we have for $n\geq1$
\begin{align*}
p_n&=\sum_{C\in\mathcal{C}_n}W(C)=\sum_{k=1}^nk\sum_{\substack{C_1\in\mathcal{C}_{a_1},\cdots,C_k\in\mathcal{C}_{a_k}\\a_1+\cdots+a_k=n-k\\a_1,\cdots,a_k\geq0}}\prod_{j=1}^kW(C_j)=\sum_{k=1}^nk\sum_{\substack{a_1+\cdots+a_k=n-k\\a_1,\cdots,a_k\geq0}}\prod_{j=1}^kp_{a_j}.
\end{align*}
It follows that 
\begin{align*}
P(x)&=1+\sum_{n\geq1}p_nx^n=1+\sum_{n\geq1}\left(\sum_{k=1}^nk\sum_{\substack{a_1+\cdots+a_k=n-k\\a_1,\cdots,a_k\geq0}}\prod_{j=1}^kp_{a_j}\right)x^n\\
&=1+\sum_{k\geq1}kx^k\left(\sum_{a_1,\cdots,a_k\geq0}\prod_{j=1}^np_{a_j}x^{a_j}\right)=1+\sum_{k\geq1}kx^k(P(x))^k=1+\frac{xP(x)}{(1-xP(x))^2}.
\end{align*}
Now let $\overline{P}(x)=xP(x)$, then $\overline{P}(x)=x(1+\overline{P}(x)(1-\overline{P}(x))^{-2})$. Thus, by Lemma \ref{lift}, we have for $n\geq 1$,
\begin{align*}
p_n&=[x^n]P(x)=[x^{n+1}]\overline{P}(x)=\frac1{n+1}[x^n]\left(1+\frac{x}{(1-x)^2}\right)^{n+1}\\
&=\frac{1}{n+1}\sum_{k=1}^n\binom{n+1}{k}[x^{n-k}]\frac{1}{(1-x)^{2k}}\\
&=\frac{1}{n+1}\sum_{k=1}^n\binom{n+1}{k}\binom{n+k-1}{2k-1},
\end{align*}
as required.
\end{proof}

\begin{theorem}\label{213}
$$\pk_n(213)=\frac{1}{n+1}\sum_{\substack{a_1+\cdots+a_{n+1}=n\\a_1,\cdots,a_{n+1}\geq0}}a_1!\cdots a_{n+1}!.$$
\end{theorem}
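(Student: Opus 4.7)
The plan is to mimic the proof of Theorem \ref{123}: produce a bijection $\Omega_n : \Av_n(213) \to \mathcal{C}_n$, express $\ell(\sigma)$ as a product involving $w(\Omega_n(\sigma))$, and then translate the resulting sum into a functional equation that Lemma \ref{lift} can attack.

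First I would establish a structural decomposition for $\Av_n(213)$. Given $\sigma \in \Av_n(213)$, let $k+1$ denote the position of $n$. Any descent in $\sigma(1)\cdots\sigma(k)$ would create a $213$ pattern with $n$ playing the role of the `$3$', so the prefix $\sigma(1)\cdots\sigma(k) = a_1\cdots a_k$ is strictly increasing; setting $a_0 = 0$ and $a_{k+1} = n$, write $B_s := \{a_{s-1}+1,\ldots,a_s-1\}$ for $s = 1,\ldots,k+1$. A short case analysis shows that for each $t \in [k]$, if some $b_1 < a_t < b_2$ appear in that positional order among the entries after $n$, then $(a_t,b_1,b_2)$ is a $213$ pattern. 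This forces the entries after $n$ to appear in the positional order $B_{k+1}, B_k, \ldots, B_1$, with each block filled by an arbitrary 213-avoiding permutation $\tau_s$; conversely every such configuration lies in $\Av_n(213)$, since higher-value blocks appear first and no $213$ pattern can cross a block boundary.

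Define $\Omega_n : \Av_n(213) \to \mathcal{C}_n$ recursively by
\[
\Omega_n(\sigma) = U^{k+1}\,D\,\Omega(\tau_{k+1})\,D\,\Omega(\tau_k)\,D\cdots D\,\Omega(\tau_1),
\]
placing $\Omega_n(\sigma)$ in $\mathcal{C}_{n,k+1}$; since the structural decomposition is itself a bijection at each level, so is $\Omega_n$. I claim $\ell(\sigma) = \prod_i w(\Omega_n(\sigma))_i!$. The first up-block of $\Omega_n(\sigma)$ has size exactly $k+1$ (it is immediately followed by $D$), and the remainder of $w(\Omega_n(\sigma))$ is the concatenation of the $w$-sequences of the $\Omega(\tau_s)$; so by induction it suffices to show $\ell(\sigma) = (k+1)!\prod_{s=1}^{k+1}\ell(\tau_s)$. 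The prefix $a_1\cdots a_k n$ is strictly increasing, giving $\ell(i,\sigma) = i$ for $1 \le i \le k+1$ and a contribution of $(k+1)!$. For a position $i$ inside block $B_s$, the entry immediately preceding that block is either $n$ or lies in a higher block, hence strictly larger than every entry of $B_s$; therefore the run computing $\ell(i,\sigma)$ cannot extend past the left boundary of $B_s$, and the contribution from that block is exactly $\ell(\tau_s)$.

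With $p_n := \pk_n(213) = \sum_{C \in \mathcal{C}_n} \prod_i w(C)_i!$ in hand, applying the canonical decomposition of $\mathcal{C}_{n,k}$ yields, for $n \ge 1$,
\[
p_n = \sum_{k=1}^n k!\,\sum_{\substack{a_1+\cdots+a_k = n-k\\ a_1,\ldots,a_k \ge 0}} \prod_{s=1}^k p_{a_s}.
\]
Setting $P(x) = \sum_{n \ge 0} p_n x^n$ and summing, this becomes $P(x) = 1 + \sum_{k \ge 1} k!\,(xP(x))^k = F(xP(x))$, where $F(y) = \sum_{j\ge 0} j!\,y^j$. Writing $Q(x) = xP(x)$ gives the Lagrangian equation $Q(x) = xF(Q(x))$, and Lemma \ref{lift} then delivers
\[
p_n = [x^{n+1}]Q(x) = \frac{1}{n+1}[x^n]F(x)^{n+1} = \frac{1}{n+1}\sum_{\substack{a_1+\cdots+a_{n+1}=n\\ a_1,\ldots,a_{n+1}\ge 0}} a_1!\cdots a_{n+1}!,
\]
as required. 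I expect the most delicate step to be the combinatorial one: justifying the block ordering after $n$ and then verifying that the $\ell$-run is always blocked at the left boundary of each block, giving the clean multiplicative identity $\ell(\sigma) = (k+1)!\prod_s\ell(\tau_s)$.
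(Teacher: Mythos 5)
Your proposal is correct and follows essentially the same route as the paper: reduce to $\pk_n(213)=\sum_{C\in\mathcal{C}_n}\prod_i(w(C)_i)!$ via a bijection $\Av_n(213)\to\mathcal{C}_n$ under which $\ell(\sigma)$ becomes $\prod_i(w(C)_i)!$, then use the canonical decomposition of Catalan paths, the functional equation $\overline{P}(x)=x\sum_{k\geq0}k!\,\overline{P}(x)^k$, and Lagrange inversion. The only (cosmetic) difference is that you describe the bijection recursively via the position of $n$ and the induced block structure of the suffix, whereas the paper phrases the same map directly through the right-to-left-maxima decomposition $\sigma=s_1x_1\cdots s_kx_k$ with each segment $s_i$ increasing.
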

\begin{proof}
For simplicity, let $p_n=\pk_n(213)$, and let $P(x)=\sum_{n\geq0}p_nx^n$ be its generating function. 

Similar to the proof of Theorem \ref{123}, the map $\Omega_n:\Av_n(213)\to\mathcal{C}_n$ described below is a bijection. For each permutation $\sigma\in\Av_n(213)$, $\sigma$ uniquely decomposes as $\sigma=s_1x_1\cdots s_kx_k$, where $x_1,\cdots,x_k$ are the right-to-left maxima of $\sigma$, and $s_1,\cdots,s_k$ are possibly empty segments. Let $\Omega_n(\sigma)$ be the Catalan path of length $2n$ obtained by, for each $i\in[k]$ in order, drawing a block of $|s_i|+1$ consecutive up-steps followed by a block of $x_i-x_{i+1}$ consecutive down-steps, where we view $x_{k+1}=0$.  

Note that from the definition of right-to-left maxima, $x_1>\cdots>x_k$, $x_i$ is larger than every entry in $s_{i+1}$ for all $i\in[k-1]$, and $x_i$ is also larger than every entry in $s_i$ for all $i\in[k]$. Since $\sigma$ avoids the pattern 213, each segment $s_i$ is increasing. Thus, we have $\ell(\sigma)=\prod_{i=1}^k(|s_i|+1)!$. Moreover, we have again that $|s_i|+1=\mathbf{u}(\Omega_n(\sigma))_i$, so
$$p_n=\pk_n(213)=\sum_{\sigma\in\Av_n(213)}\ell(\sigma)=\sum_{C\in\mathcal{C}_n}\prod_{i=1}^{|\mathbf{u}(C)|}(\mathbf{u}(C)_i)!.$$

Let $W(C)=\prod_{i=1}^{|\mathbf{u}(C)|}(\mathbf{u}(C)_i)!$. For all $k\in[n]$ and $C\in\mathcal{C}_{n,k}$, let $C=U_1\cdots U_kD_1C_1\cdots D_kC_k$ be the canonical decomposition of $C$. Then we have $W(C)=k!\prod_{j=1}^kW(C_j)$. Moreover, recall that the map $C\mapsto(C_1,\cdots,C_k)$ is a bijection, so we have
\begin{align*}
p_n&=\sum_{k=1}^nk!\sum_{\substack{C_1\in\mathcal{C}_{a_1},\cdots,C_k\in\mathcal{C}_{a_k}\\a_1+\cdots+a_k=n-k\\a_1,\cdots,a_k\geq0}}\prod_{j=1}^kW(C_j)=\sum_{k=1}^nk!\sum_{\substack{a_1+\cdots+a_k=n-k\\a_1,\cdots,a_k\geq0}}\prod_{j=1}^kp_{a_j}.
\end{align*}
It follows that 
\begin{align*}
P(x)&=1+\sum_{n\geq1}p_nx^n=1+\sum_{n\geq1}\left(\sum_{k=1}^nk!\sum_{\substack{a_1+\cdots+a_k=n-k\\a_1,\cdots,a_k\geq0}}\prod_{j=1}^np_{a_j}\right)x^n\\
&=1+\sum_{k\geq1}k!x^k\left(\sum_{a_1,\cdots,a_k\geq0}\prod_{j=1}^np_{a_j}x^{a_j}\right)=1+\sum_{k\geq1}k!x^k(P(x))^k.
\end{align*}
Now let $\overline{P}(x)=xP(x)$, then $\overline{P}(x)=x\sum_{k\geq0}k!(\overline{P}(x))^k$. Thus, by Lemma \ref{lift}, we have for $n\geq 0$,
\begin{align*}
p_n&=[x^n]P(x)=[x^{n+1}]\overline{P}(x)=\frac1{n+1}[x^n]\left(\sum_{k\geq0}k!x^k\right)^{n+1}\\
&=\frac{1}{n+1}\sum_{\substack{a_1+\cdots+a_{n+1}=n\\a_1,\cdots,a_{n+1}\geq0}}a_1!\cdots a_{n+1}!,
\end{align*}
as required.
\end{proof}

As will be seen below, when we express $\pk_n(312)$ and $\pk_n(321)$ as sums over $\mathcal{C}_n$, similar to the proofs of Theorem \ref{123}, \ref{213} above, the terms in these sums will be products of cumulative sums of entries in $\mathbf{u}(C)$, instead of just products of individual entries in $\mathbf{u}(C)$. As a result, our method above using canonical decomposition of Catalan paths does not translate. Instead, we use the following observation to obtain recursive formulas for $\pk_n(312)$ and $\pk_n(321)$.

For $n>k\geq1$ and $C\in\mathcal{C}_{n,k}$, we have $|\mathbf{u}(C)|\geq2$. Let $i'\in[k]$ be the length of the first block of down-steps in $C$ and let $C'$ be obtained by deleting the last $i'$ up-steps in the first block of up-steps in $C$, as well as the first $i'$ down-steps following it. It is easy to verify that $C'$ is a Catalan path of length $2(n-i')$, whose first block of up-steps have length $j=\mathbf{u}(C)_1+\mathbf{u}(C)_2-i'$, which is between $k-i'+1$ and $n-i'$. We say that $C'$ is obtained by 
\textit{deleting the first peak} of $C$. It is also easy to verify that this process is reversible.

\begin{theorem}\label{312}
$$\pk_n(312)=\sum_{C\in\mathcal{C}_n}\prod_{i=2}^{|\mathbf{u}(C)|}\left(1+\sum_{j=i}^{|\mathbf{u}(C)|}\mathbf{u}(C)_j\right).$$
Consequently, $\pk_n(312)=\sum_{k=1}^np_{n,k}$, where
$$p_{n,k}=\begin{cases}
1, &\text{if }k=n,\\
\displaystyle (n-k+1)\sum_{i=n-k}^{n-1}\sum_{j=k+1-n+i}^ip_{i,j}, &\text{if }1\leq k\leq n-1.\\
\end{cases}$$
Furthermore, the sequence $\pk_n(312)$ satisfies $$\frac{x}{1-x}=\sum_{n=1}^\infty\pk_n(312)\frac{x^n(1-x)^n}{\prod_{\ell=1}^n(1+\ell x)}.$$
\end{theorem}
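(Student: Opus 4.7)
\emph{Part 1 (sum over Catalan paths).} For $\sigma\in\Av_n(312)$, I would work with the maximal decreasing run decomposition $R_1,\dots,R_k$ of $\sigma$ (read left to right), writing $r_j=|R_j|$ and $m_j$ for the first (hence largest) entry of $R_j$. The two structural observations driving everything are: (i) 312-avoidance forces $m_1<m_2<\cdots<m_k=n$, because if $m_{j+1}<m_j$ then $m_j$, the last entry of $R_j$, and $m_{j+1}$ form a 312 pattern; and (ii) the non-leading entries of $R_j$ must be the $r_j-1$ largest values below $m_j$ not yet used in $R_1,\dots,R_{j-1}$, because any unused value below $m_j$ placed later would pair with $m_j$ and a smaller entry of $R_j$ to form a 312 pattern. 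These properties identify $\sigma$ bijectively with a pair $(r,m)$ subject to the inequality $m_j\geq r_1+\cdots+r_j$, so setting $\Omega_n(\sigma)$ to be the Catalan path with up-block sequence $(r_k,r_{k-1},\dots,r_1)$ and down-block sequence $(m_k-m_{k-1},\dots,m_2-m_1,m_1)$ gives a bijection $\Av_n(312)\to\mathcal{C}_n$ (the inverse is the greedy fill just described). At the starting position of $R_j$ every earlier entry is at most $m_{j-1}<m_j$, so $\ell(\text{start of }R_j,\sigma)=1+r_1+\cdots+r_{j-1}$; strictly inside a run, the immediately preceding entry is strictly larger, so $\ell=1$. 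Multiplying and reindexing $w_i=r_{k+1-i}$ yields $\ell(\sigma)=\prod_{i=2}^{|w|}(1+S_i)$, the first formula.

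\emph{Part 2 (recursion for $p_{n,k}$).} I would apply the delete-first-peak operation introduced just before the statement. A direct calculation gives $S_i(C')=S_{i+1}(C)$ for all $i\geq 2$, hence $W(C)/W(C')=1+S_2(C)=n-k+1$, which is independent of the first down-block length $i'$. Since $C\mapsto(i',C')$ is a bijection from $\mathcal{C}_{n,k}$ onto $\{(i',C'):1\leq i'\leq k,\ C'\in\mathcal{C}_{n-i',j},\ k-i'+1\leq j\leq n-i'\}$, summing gives $p_{n,k}=(n-k+1)\sum_{i'=1}^k\sum_{j=k-i'+1}^{n-i'}p_{n-i',j}$; the substitution $i=n-i'$ transforms this into the theorem's recursion, and $p_{n,n}=1$ follows from $\mathcal{C}_{n,n}=\{U^nD^n\}$.

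\emph{Part 3 (generating function identity).} This is where the main obstacle lies. My plan is to exploit the equivalent product formula $W(C)=\prod_{j=1}^{m-1}(n+1-W_j)$, where $W_j=w_1+\cdots+w_j$ and $m=|w(C)|$, which follows from $1+S_i=n+1-W_{i-1}$. Grouping Catalan paths by the set $T=\{W_1,\dots,W_{m-1}\}\subseteq[n-1]$ of partial up-sums yields $\pk_n(312)=\sum_{T\subseteq[n-1]}N(T)\prod_{t\in T}(n+1-t)$, where $N(T)$ counts Catalan paths whose partial up-sum set is $T$. Substituting this into $\sum_{n\geq1}\pk_n(312)\cdot x^n(1-x)^n/\prod_{\ell=1}^n(1+\ell x)$ and expanding the denominator by partial fractions as $1/\prod_{\ell=1}^n(1+\ell x)=\sum_{\ell=1}^n\frac{(-1)^{n-\ell}\ell^{n-1}}{(\ell-1)!\,(n-\ell)!}\cdot\frac{1}{1+\ell x}$, followed by interchanging orders of summation, should collapse the expression telescopically to $x/(1-x)$. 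The hardest step will be proving that this telescoping actually takes place; as a fallback, I would prove the identity by induction on the coefficient order, using the recursion from Part~2 to verify that $[x^m]$ of both sides agree for every $m\geq 1$.
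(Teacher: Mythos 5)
Your Parts 1 and 2 are correct and essentially reproduce the paper's argument. Part 1 phrases the bijection via maximal decreasing runs rather than the paper's left-to-right-maxima decomposition $x_ks_k\cdots x_1s_1$, but these are the same decomposition (the run leaders are exactly the left-to-right maxima, and $312$-avoidance forces each $s_i$ to be decreasing), and your computation of $\ell(\sigma)$ as $\prod_{j\geq 2}(1+r_1+\cdots+r_{j-1})$ agrees with the paper's $\prod_{i=1}^{k-1}\bigl(1+\sum_{j=i+1}^{k}(|s_j|+1)\bigr)$. Part 2, the delete-first-peak recursion with the ratio $W(C)/W(C')=n-k+1$ independent of the first down-block length, is exactly what the paper does (it carries this out in the $m$-parameter generality of Theorem \ref{metasylvestermultipark} and specialises to $m=1$).

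Part 3 contains a genuine gap. You reduce the identity to an expression $\sum_{T\subseteq[n-1]}N(T)\prod_{t\in T}(n+1-t)$, where $N(T)$ counts Catalan paths with prescribed partial up-sum set; but $N(T)$ is itself a nontrivial ballot-type count (the down-block composition is not determined by $T$), and the claimed collapse after partial-fraction expansion of $1/\prod_{\ell=1}^n(1+\ell x)$ is precisely the step you have not carried out — you say so yourself. The fallback of "induction on the coefficient order using the recursion from Part 2" is also not an argument as stated: the recursion lives on the two-parameter array $p_{n,k}$, and you do not explain how extracting $[x^m]$ of the one-variable identity interfaces with it. The paper closes exactly this gap by working directly with the triangle $p_{n,k}$: it introduces the diagonal series $P_k(x)=\sum_{n\geq k+1}p_{n,n-k}x^{n-k-1}$ and the partial sums $S_k(x)=\sum_{j=0}^{k}\sum_{i\geq k+1}p_{i,i-j}x^{i-1}$, derives from the Part 2 recursion the first-order relation
$$S_{k+1}(x)=\frac{1+(k+1)x}{1-x}\,S_k(x)-p_{k+1}x^{k},$$
and then sums $p_{k+1}x^{k+1}(1-x)^{k+1}/\prod_{\ell=1}^{k+1}(1+\ell x)$ over $k$, where the identity above makes the sum telescope to $xS_0(x)=\sum_{i\geq1}p_{i,i}x^{i}=x/(1-x)$. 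To complete your write-up you would either need to prove your partial-fraction telescoping in full, or adopt a mechanism of this kind that converts the two-dimensional recursion into the one-variable identity.
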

\begin{proof}
For brevity, let $p_n=\pk_n(312)$. 

Similar to the proof of Theorem \ref{213}, the map $\Omega_n:\Av_n(312)\to\mathcal{C}_n$ described below is a bijection. Each $\sigma\in\Av_n(312)$ can be uniquely decomposed as $x_ks_k\cdots x_1s_1$, where $x_k<\cdots<x_1$ are the left-to-right maxima of $\sigma$, and $s_k,\cdots,s_1$ are possibly empty segments. Define $\Omega_n(\sigma)$ to be the Catalan path of length $2n$ obtained by, for each $i\in[k]$ in order, drawing a block of $|s_i|+1$ up-steps and followed by a block of $x_i-x_{i+1}$ down-steps, where we view $x_{k+1}$ as 0.

Note that from the definition, for all $i\in[k]$, $x_i$ is larger than every entry before it and every entry in the segment $s_i$. Moreover, each segment $s_i$ is decreasing as $\sigma$ avoids the pattern 312. It follows that $\ell(\sigma)=\prod_{i=1}^{k-1}\left(1+\sum_{j=i+1}^{k}(|s_j|+1)\right)$. Since $|s_i|+1=\mathbf{u}(\Omega_n(\sigma))_i$ and $\Omega_n$ is a bijection, it follows that
$$\pk_n(312)=p_n=\sum_{\sigma\in\Av_n(312)}\ell(\sigma)=\sum_{C\in\mathcal{C}_n}\prod_{i=2}^{|\mathbf{u}(C)|}\left(1+\sum_{j=i}^{|\mathbf{u}(C)|}\mathbf{u}(C)_j\right),$$
proving the first part. The second and third part follows by setting $m=1$ in Theorem \ref{metasylvestermultipark} below.
\end{proof}

\begin{theorem}\label{321}
$$\pk_n(321)=\sum_{C\in\mathcal{C}_n}\prod_{i=2}^{|\mathbf{u}(C)|}\left(1+\sum_{j=i}^{|\mathbf{u}(C)|}\mathbf{u}(C)_j\right)\prod_{i=1}^{|\mathbf{u}(C)|}(\mathbf{u}(C)_i-1)!.$$
Consequently, $\pk_n(321)=\sum_{k=1}^n(k-1)!p_{n,k}$, where
$$p_{n,k}=\begin{cases}
1, &\text{if }k=n,\\
\displaystyle (n-k+1)\sum_{i=n-k}^{n-1}\sum_{j=k+1-n+i}^i(n-i+j-k-1)!p_{i,j}, &\text{if }1\leq k\leq n-1.\\
\end{cases}$$
\end{theorem}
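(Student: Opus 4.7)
The plan is to mirror the proof of Theorem \ref{312} (the 312 case) as closely as possible, with the added twist that the segments now contribute nontrivial factorials. First, I would establish a Krattenthaler-style bijection $\Omega_n:\Av_n(321)\to\mathcal{C}_n$ using left-to-right maxima: each $\sigma\in\Av_n(321)$ decomposes uniquely as $\sigma=x_ks_k\cdots x_1s_1$ where $x_k<\cdots<x_1$ are the left-to-right maxima in the order they appear in $\sigma$, and $s_k,\ldots,s_1$ are (possibly empty) segments. The path $\Omega_n(\sigma)$ is obtained by drawing, for each $i\in[k]$ in the order $x_k,\ldots,x_1$, a block of $|s_i|+1$ up-steps followed by $x_i-x_{i+1}$ down-steps (with the convention $x_{k+1}=0$), so that $w(\Omega_n(\sigma))_i=|s_i|+1$.

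Second, I would compute $\ell(\sigma)$ in terms of this decomposition. Since $\sigma$ avoids 321 and $x_i$ is larger than every entry in $s_i$, each segment $s_i$ must be increasing (otherwise $x_i$ together with a decreasing pair in $s_i$ would realise a 321 pattern). For the left-to-right maximum $x_i$, every entry preceding it is strictly smaller than $x_i$, so $\ell(x_i,\sigma)$ equals the position of $x_i$, namely $1+\sum_{j=i+1}^{k}(|s_j|+1)$. For the $j$-th entry $y_j$ of an increasing segment $s_i$, the suffix of entries $\leq y_j$ ending at $y_j$ extends back through $y_{j-1},\ldots,y_1$ but is blocked by $x_i>y_j$, giving $\ell(y_j,\sigma)=j$, so $s_i$ contributes $|s_i|!=(w(C)_i-1)!$. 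Multiplying everything out and passing through the bijection yields the first displayed formula.

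Third, for the recursive formula I would imitate the deleting-the-first-peak argument from Theorem \ref{312}. Set
\[
W^*(C)=\prod_{i=2}^{|w(C)|}\Bigl(1+\sum_{j=i}^{|w(C)|}w(C)_j\Bigr)\prod_{i=2}^{|w(C)|}(w(C)_i-1)!,
\]
and $p_{n,k}=\sum_{C\in\mathcal{C}_{n,k}}W^*(C)$, so that $\pk_n(321)=\sum_{k=1}^n(k-1)!p_{n,k}$ after factoring out $(w(C)_1-1)!=(k-1)!$. The base case $k=n$ is immediate since $\mathcal{C}_{n,n}$ has a unique element with $|w(C)|=1$, making both products empty. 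For $1\leq k\leq n-1$ and $C\in\mathcal{C}_{n,k}$, let $i'$ be the length of the first down-step block and $C'\in\mathcal{C}_{n-i',j}$ its image under deleting the first peak, where $j=k+w(C)_2-i'$. A short bookkeeping argument, using that $\sum_{j=2}^{|w(C)|}w(C)_j=n-k$ and that deletion re-indexes $w(C)_i=w(C')_{i-1}$ for $i\geq 3$ while merging $w(C)_1$ and $w(C)_2$ into $w(C')_1=j$, gives
\[
W^*(C)=(n-k+1)\,(w(C)_2-1)!\,W^*(C')=(n-k+1)\,(j+i'-k-1)!\,W^*(C').
\]
Summing over all $(i',j,C')$ with $1\leq i'\leq k$ and $k-i'+1\leq j\leq n-i'$, and then substituting $i=n-i'$, yields the displayed recurrence.

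The main bookkeeping hurdle, compared with Theorem \ref{312}, is the extra factor $(w(C)_2-1)!=(j+i'-k-1)!$: it depends on both $i'$ and $j$, so it cannot be pulled outside the inner sum (unlike the uniform factor $(n-k+1)$ coming from the cumulative sum). This is why the recurrence here carries a factorial weight inside the double sum, whereas in the 312 case it does not.
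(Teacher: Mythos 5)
Your proposal is correct and follows essentially the same route as the paper: the same left-to-right-maxima bijection and computation of $\ell(\sigma)$ for the first formula, and the same delete-the-first-peak recurrence for the second, with only the cosmetic difference that you absorb the factor $(k-1)!$ into the definition of $p_{n,k}$ by starting the factorial product at $i=2$ rather than dividing by $(k-1)!$ as the paper does.
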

\begin{proof}
For brevity, let $p_n=\pk_n(321)$. 

Similar to the proof of Theorem \ref{123}, the map $\Omega_n:\Av_n(321)\to\mathcal{C}_n$ described below is a bijection. Each $\sigma\in\Av_n(321)$ can be uniquely decomposed as $x_ks_k\cdots x_1s_1$, where $x_k<\cdots<x_1$ are the left-to-right maxima of $\sigma$, and $s_k,\cdots,s_1$ are possibly empty segments. Define $\Omega_n(\sigma)$ to be the Catalan path of length $2n$ obtained by, for each $i\in[k]$ in order, drawing a block of $|s_i|+1$ up-steps and followed by a block of $x_i-x_{i+1}$ down-steps, where we view $x_{k+1}$ as 0.

Note that from the definition, for all $i\in[k]$, $x_i$ is larger than every entry before it and every entry in the segment $s_i$. Moreover, each segment $s_i$ is increasing as $\sigma$ as $\sigma$ avoids pattern 321. It follows that 
$$\ell(\sigma)=\prod_{i=1}^{k-1}\left(1+\sum_{j=i+1}^{k}(|s_j|+1)\right)\prod_{i=1}^k|s_i|!.$$ Since $|s_i|+1=\mathbf{u}(\Omega_n(\sigma))_i$ and $\Omega_n$ is a bijection, it follows that
$$\pk_n(321)=p_n=\sum_{\sigma\in\Av_n(321)}\ell(\sigma)=\sum_{C\in\mathcal{C}_n}\prod_{i=2}^{|\mathbf{u}(C)|}\left(1+\sum_{j=i}^{|\mathbf{u}(C)|}\mathbf{u}(C)_j\right)\prod_{i=1}^{|\mathbf{u}(C)|}(\mathbf{u}(C)_i-1)!,$$
proving the first part.

Let $W(C)=\prod_{i=2}^{|\mathbf{u}(C)|}(1+\sum_{j=i}^{|\mathbf{u}(C)|}\mathbf{u}(C)_j)\prod_{i=1}^{|\mathbf{u}(C)|}(\mathbf{u}(C)_i-1)!$, and let $p_{n,k}=\frac1{(k-1)!}\sum_{C\in\mathcal{C}_{n,k}}W(C)$, so that $p_n=\pk_n(321)=\sum_{k=1}^n(k-1)!p_{n,k}$. Note that $p_{n,n}=1$ as $\mathcal{C}_{n,n}$ contains only the Catalan path consisting of $n$ up-steps followed by $n$ down-steps. For $n>k\geq1$ and $C\in\mathcal{C}_{n,k}$, we have $|\mathbf{u}(C)|\geq2$. Let $i'\in[k]$ be the length of the first block of down-steps in $C$ and let $C'\in\mathcal{C}_{n-i',j}$ be obtained by deleting the first peak of $C$. Note that $\mathbf{u}(C')_t=\mathbf{u}(C)_{t+1}$ for all $2\leq t\leq|\mathbf{u}(C')|=|\mathbf{u}(C)|-1$, while $j=\mathbf{u}(C')_1=\mathbf{u}(C)_1+\mathbf{u}(C)_2-i'=k+\mathbf{u}(C)_2-i'$. Therefore,
\begin{align*}
W(C)&=\prod_{i=2}^{|\mathbf{u}(C)|}\left(1+\sum_{j=i}^{|\mathbf{u}(C)|}\mathbf{u}(C)_j\right)\prod_{i=1}^{|\mathbf{u}(C)|}(\mathbf{u}(C)_i-1)!\\
&=\left(1+\sum_{j=2}^{|\mathbf{u}(C)|}\mathbf{u}(C)_j\right)(\mathbf{u}(C)_1-1)!(\mathbf{u}(C)_2-1)!\prod_{i=3}^{|\mathbf{u}(C)|}\left(1+\sum_{j=i}^{|\mathbf{u}(C)|}\mathbf{u}(C)_j\right)\prod_{i=3}^{|\mathbf{u}(C)|}(\mathbf{u}(C)_i-1)!\\
&=\left(1+\sum_{j=2}^{|\mathbf{u}(C)|}\mathbf{u}(C)_j\right)\frac{(\mathbf{u}(C)_1-1)!(\mathbf{u}(C)_2-1)!}{(\mathbf{u}(C')_1-1)!}W(C')\\
&=(n-k+1)\frac{(k-1)!(i'+j-k-1)!}{(j-1)!}W(C').
\end{align*}
Thus, we have
\begin{align*}
p_{n,k}&=\frac1{(k-1)!}\sum_{C\in\mathcal{C}_{n,k}}W(C)=\sum_{i'=1}^k\sum_{j=k-i'+1}^{n-i'}\sum_{C'\in\mathcal{C}_{n-i',j}}(n-k+1)\frac{(i'+j-k-1)!}{(j-1)!}W(C').\\
&=(n-k+1)\sum_{i'=1}^k\sum_{j=k-i'+1}^{n-i'}(i'+j-k-1)!p_{n-i',j}\\
&=(n-k+1)\sum_{i=n-k}^{n-1}\sum_{j=k+1-n+i}^i(n-i+j-k-1)!p_{i,j},
\end{align*}
as required.
\end{proof}

\section{Pattern avoidance in block permutations}
In this section, we provide bijective proofs of Theorem \ref{thm:main1} and \ref{thm:main2}, which were originally proved algebraically by Adeniran and Pudwell in \cite{AP}. 
\subsection{Bijective proof of Theorem \ref{thm:main1}}\label{123132}
We begin with some preliminary analysis of parking functions whose block permutations avoid the patterns 123 and 132, and introduce some concepts and notations that will be used in the proof below. Let $f\in\Pf_n(123,132)$ and let $\pi_f\in S_n$ be its associated block permutation. Since $\pi_f$ avoids 132, every entry in $\pi_f$ appearing before $n$ is larger than every entry appearing after $n$. Since $\pi_f$ avoids 123, the entries in $\pi_f$ before $n$ appear in decreasing order, and each block has size 0, 1 or 2. In particular, $\pi_f(1)=n$ or $n-1$. Also, using condition \ref{blockcondition} and that each block has size 0, 1 or 2, we see that there is always an equal number of size 2 blocks and empty blocks, and for all $i\geq 1$, the $i$-th size 2 block always appear before the $i$-th empty block, provided that both exist. Hence, the subsequence of size 2 blocks and empty blocks can be viewed as a sequence of correctly matched left and right brackets. As such, for every size 2 block, there exists a unique corresponding empty block that is matched with it in this way. 

We now recursively partition $f\in\Pf_n(123,132)$ written in block notation into what we called \textit{clusters}, each of which is a union of blocks in $f$, as follows. When $n=0$, the unique empty parking function contains no cluster. For $n\geq 1$:

\begin{itemize}
    \item If $\pi_f(1)=n$, let $0\leq k\leq n-1$ be the smallest integer such that $\pi_f(i)=n+1-i$ for all $1\leq i\leq n-k$. By condition \ref{blockcondition}, the first $n-k$ blocks of $f$ all have size 1, and contain $n,n-1,\cdots,k+1$ in order. The first cluster of $f$ is then defined to be the union of these $n-k$ blocks, and is called an \textit{extend cluster} of length $n-k$.

    \item If $\pi_f(1)=n-1$, let $0\leq k\leq n-1$ be such that $\pi_f(n-k)=n$. It follows that $\pi_f(i)=n-i$ for all $1\leq i\leq n-k-1$ as $\pi_f$ avoids 123 and 132. Moreover, one of the following must occur:
    \begin{itemize}
        \item The first $n-k$ blocks of $f$ all have size 1. In this case, the first cluster of $f$ is defined to be the union of the first $n-k$ blocks of $f$, and is called a \textit{branch cluster} of length $n-k$. 
        \item The first $n-k-2$ blocks of $f$ all have size 1, while the $(n-k-1)$-th block of $f$ have size 2 and contains $k+1$ and $n$. In this case, the first cluster of $f$ is defined to be the union of the first $n-k-1$ blocks of $f$, along with the unique empty block corresponding to the $(n-k-1)$-th block of $f$, and we call it a \textit{jump cluster} of length $n-k$.
    \end{itemize}
\end{itemize}
In all cases above, let $f'$ be obtained by removing all blocks in the first cluster from $f$, and note that $f'$ is a parking function in $\Pf_k(123,132)$. We then define the remaining clusters of $f$ to be the clusters of $f'$. Furthermore, we define the \textit{main portion} of each cluster to be the size 1 or 2 blocks inside the cluster. It follows from the definition above that if we ignore empty blocks, then the main portion of each cluster appears consecutively in $f$. Finally, for a jump cluster, we say that its unique empty block lies \textit{inside} another cluster if it lies ahead of a block in the main portion of this cluster. See Figure \ref{fig:123132big} for an example of how the blocks of a parking function $f\in\Pf_{25}(123,132)$ are partitioned into clusters. 

We are now ready to give a bijective proof of Theorem \ref{thm:main1}. 

\begin{proof}[Proof of Theorem \ref{thm:main1}]
Let $\mathcal{T}^{\text{odd}}_{n+1}$ be the set of ordered rooted trees with $n+1$ edges and odd root degrees. We find a bijective correspondence between $\Pf_n(123,132)$ and $\mathcal{T}^{\text{odd}}_{n+1}$ for all $n\geq 0$.

\mbox{}

\noindent$\bullet\mbox{ }\Phi_n:\Pf_n(123,132)\to\mathcal{T}^{\text{odd}}_{n+1}$.

We define $\Phi_n$ using induction on $n$. For each $f\in\Pf_n(123,132)$, in addition to defining the rooted tree $\Phi_n(f)$, we also label its $n+1$ non-root vertices with $0,1,\cdots,n$ to make the definitions of $\Phi_n$ and $\Psi_n$ easier to describe. The definition of $\Phi_0,\Phi_1,\Phi_2$ and $\Phi_3$ along with the vertex labels are given in Figure \ref{fig:123132example}. 

Now assume $n\geq 4$ and let $f\in\Pf_n(123,213)$. Let the first cluster of $f$ have length $n-k$, where $0\leq k\leq n-1$. Let $f'$ be obtained by deleting the first cluster from $f$, and note that $f'\in\Pf_k(123,132)$. Define two labelled graphs $P_{k+1,n}$, $Q_{k+1,n}$ as in Figure \ref{fig:pqexample} below. Depending on the first cluster of $f$, we split into several cases, but in all cases, $\Phi_n(f)$ will be obtained by attaching the unique unlabelled vertex of either $P_{k+1,n}$ or $Q_{k+1,n}$ to a vertex in $\Phi_k(f')$ to create either one or two new branches at left-most positions, respectively. 

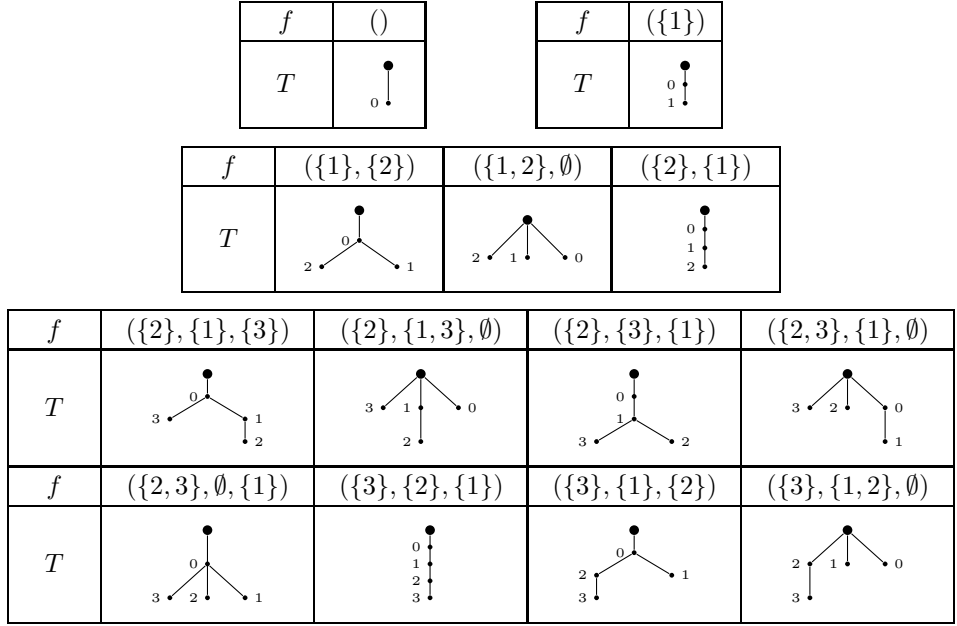
\begin{figure}[t]
    \centering
    \renewcommand{\arraystretch}{1.2}
\newcolumntype{C}{ >{\centering\arraybackslash} m{0.8cm} }
\newcolumntype{D}{ >{\centering\arraybackslash} m{1.8cm} }
\newcolumntype{E}{ >{\centering\arraybackslash} m{2.4cm} }
\newcolumntype{F}{ >{\centering\arraybackslash} m{3cm} }

\begin{tabular}{F>{\centering\arraybackslash} m{0.1cm}F}
\begin{tabular}{|C|C|}
\hline $f$ & $()$\\\hline
$T$ & \begin{tikzpicture}
\draw (0,0.2) node{};
\draw (0,0) node(root)[inner sep=0.3ex,circle,fill=black]{};
\draw (0,-0.5) node(0)[inner sep=0.15ex,circle,fill=black]{};
\node [left] at (0,-0.5) {\tiny 0};
\draw (root) -- (0);
\end{tikzpicture}\\\hline
\end{tabular}  & \quad & \begin{tabular}{|C|C|}
\hline$f$ & $(\{1\})$\\\hline
$T$ & \begin{tikzpicture}
\draw (0,0.2) node{};
\draw (0,0) node(root)[inner sep=0.3ex,circle,fill=black]{};
\draw (0,-0.25) node(0)[inner sep=0.15ex,circle,fill=black]{};
\node [left] at (0,-0.25) {\tiny 0};
\draw (0,-0.5) node(1)[inner sep=0.15ex,circle,fill=black]{};
\node [left] at (0,-0.5) {\tiny 1};
\draw (root) -- (1);
\end{tikzpicture}\\\hline
\end{tabular}\\
\end{tabular}

\vspace{0.2cm}

\begin{tabular}{|C|D|D|D|}
\hline$f$ & $(\{1\},\{2\})$ & $(\{1,2\},\emptyset)$ & $(\{2\},\{1\})$ \\\hline
$T$ & \begin{tikzpicture}
\draw (0,0.2) node{};
\draw (0,0) node(root)[inner sep=0.3ex,circle,fill=black]{};
\draw (0,-0.4) node(0)[inner sep=0.15ex,circle,fill=black]{};
\node [left] at (0,-0.4) {\tiny 0};
\draw (0.5,-0.75) node(1)[inner sep=0.15ex,circle,fill=black]{};
\node [right] at (0.5,-0.75) {\tiny 1};
\draw (-0.5,-0.75) node(2)[inner sep=0.15ex,circle,fill=black]{};
\node [left] at (-0.5,-0.75) {\tiny 2};
\draw (root) -- (0);
\draw (1) -- (0);
\draw (2) -- (0);
\end{tikzpicture} & \begin{tikzpicture}
\draw (0,0.2) node{};
\draw (0,0) node(root)[inner sep=0.3ex,circle,fill=black]{};
\draw (0,-0.5) node(1)[inner sep=0.15ex,circle,fill=black]{};
\node [left] at (0,-0.5) {\tiny 1};
\draw (0.5,-0.5) node(0)[inner sep=0.15ex,circle,fill=black]{};
\node [right] at (0.5,-0.5) {\tiny 0};
\draw (-0.5,-0.5) node(2)[inner sep=0.15ex,circle,fill=black]{};
\node [left] at (-0.5,-0.5) {\tiny 2};
\draw (root) -- (0);
\draw (root) -- (1);
\draw (root) -- (2);
\end{tikzpicture} & \begin{tikzpicture}
\draw (0,0.2) node{};
\draw (0,0) node(root)[inner sep=0.3ex,circle,fill=black]{};
\draw (0,-0.25) node(0)[inner sep=0.15ex,circle,fill=black]{};
\node [left] at (0,-0.25) {\tiny 0};
\draw (0,-0.5) node(1)[inner sep=0.15ex,circle,fill=black]{};
\node [left] at (0,-0.5) {\tiny 1};
\draw (0,-0.75) node(2)[inner sep=0.15ex,circle,fill=black]{};
\node [left] at (0,-0.75) {\tiny 2};
\draw (root) -- (2);
\end{tikzpicture}\\\hline
\end{tabular}

\vspace{0.2cm}

\begin{tabular}{|C|E|E|E|E|}
\hline$f$ & $(\{2\},\{1\},\{3\})$ & $(\{2\},\{1,3\},\emptyset)$ & $(\{2\},\{3\},\{1\})$ & $(\{2,3\},\{1\},\emptyset)$ \\\hline
$T$ & \begin{tikzpicture}
\draw (0,0.2) node{};
\draw (0,0) node(root)[inner sep=0.3ex,circle,fill=black]{};
\draw (0,-0.3) node(0)[inner sep=0.15ex,circle,fill=black]{};
\node [left] at (0,-0.3) {\tiny 0};
\draw (0.5,-0.6) node(1)[inner sep=0.15ex,circle,fill=black]{};
\node [right] at (0.5,-0.6) {\tiny 1};
\draw (0.5,-0.9) node(2)[inner sep=0.15ex,circle,fill=black]{};
\node [right] at (0.5,-0.9) {\tiny 2};
\draw (-0.5,-0.6) node(3)[inner sep=0.15ex,circle,fill=black]{};
\node [left] at (-0.5,-0.6) {\tiny 3};
\draw (root) -- (0);
\draw (1) -- (0);
\draw (3) -- (0);
\draw (1) -- (2);
\end{tikzpicture} & \begin{tikzpicture}
\draw (0,0.2) node{};
\draw (0,0) node(root)[inner sep=0.3ex,circle,fill=black]{};
\draw (0,-0.45) node(1)[inner sep=0.15ex,circle,fill=black]{};
\node [left] at (0,-0.45) {\tiny 1};
\draw (0,-0.9) node(2)[inner sep=0.15ex,circle,fill=black]{};
\node [left] at (0,-0.9) {\tiny 2};
\draw (0.5,-0.45) node(0)[inner sep=0.15ex,circle,fill=black]{};
\node [right] at (0.5,-0.45) {\tiny 0};
\draw (-0.5,-0.45) node(3)[inner sep=0.15ex,circle,fill=black]{};
\node [left] at (-0.5,-0.45) {\tiny 3};
\draw (root) -- (0);
\draw (root) -- (3);
\draw (root) -- (2);
\end{tikzpicture} & \begin{tikzpicture}
\draw (0,0.2) node{};
\draw (0,0) node(root)[inner sep=0.3ex,circle,fill=black]{};
\draw (0,-0.3) node(0)[inner sep=0.15ex,circle,fill=black]{};
\node [left] at (0,-0.3) {\tiny 0};
\draw (0,-0.6) node(1)[inner sep=0.15ex,circle,fill=black]{};
\node [left] at (0,-0.6) {\tiny 1};
\draw (0.5,-0.9) node(2)[inner sep=0.15ex,circle,fill=black]{};
\node [right] at (0.5,-0.9) {\tiny 2};
\draw (-0.5,-0.9) node(3)[inner sep=0.15ex,circle,fill=black]{};
\node [left] at (-0.5,-0.9) {\tiny 3};
\draw (root) -- (1);
\draw (2) -- (1);
\draw (3) -- (1);
\end{tikzpicture} & \begin{tikzpicture}
\draw (0,0.2) node{};
\draw (0,0) node(root)[inner sep=0.3ex,circle,fill=black]{};
\draw (0,-0.45) node(2)[inner sep=0.15ex,circle,fill=black]{};
\node [left] at (0,-0.45) {\tiny 2};
\draw (0.5,-0.9) node(1)[inner sep=0.15ex,circle,fill=black]{};
\node [right] at (0.5,-0.9) {\tiny 1};
\draw (0.5,-0.45) node(0)[inner sep=0.15ex,circle,fill=black]{};
\node [right] at (0.5,-0.45) {\tiny 0};
\draw (-0.5,-0.45) node(3)[inner sep=0.15ex,circle,fill=black]{};
\node [left] at (-0.5,-0.45) {\tiny 3};
\draw (root) -- (0);
\draw (0) -- (1);
\draw (root) -- (3);
\draw (root) -- (2);
\end{tikzpicture}\\
\hline$f$ & $(\{2,3\},\emptyset,\{1\})$ & $(\{3\},\{2\},\{1\})$ & $(\{3\},\{1\},\{2\})$ & $(\{3\},\{1,2\},\emptyset)$ \\\hline
$T$ & \begin{tikzpicture}
\draw (0,0.2) node{};
\draw (0,0) node(root)[inner sep=0.3ex,circle,fill=black]{};
\draw (0,-0.45) node(0)[inner sep=0.15ex,circle,fill=black]{};
\node [left] at (0,-0.45) {\tiny 0};
\draw (0,-0.9) node(2)[inner sep=0.15ex,circle,fill=black]{};
\node [left] at (0,-0.9) {\tiny 2};
\draw (0.5,-0.9) node(1)[inner sep=0.15ex,circle,fill=black]{};
\node [right] at (0.5,-0.9) {\tiny 1};
\draw (-0.5,-0.9) node(3)[inner sep=0.15ex,circle,fill=black]{};
\node [left] at (-0.5,-0.9) {\tiny 3};
\draw (1) -- (0);
\draw (0) -- (3);
\draw (root) -- (2);
\end{tikzpicture} & \begin{tikzpicture}
\draw (0,0.2) node{};
\draw (0,0) node(root)[inner sep=0.3ex,circle,fill=black]{};
\draw (0,-0.225) node(0)[inner sep=0.15ex,circle,fill=black]{};
\node [left] at (0,-0.225) {\tiny 0};
\draw (0,-0.45) node(1)[inner sep=0.15ex,circle,fill=black]{};
\node [left] at (0,-0.45) {\tiny 1};
\draw (0,-0.675) node(2)[inner sep=0.15ex,circle,fill=black]{};
\node [left] at (0,-0.675) {\tiny 2};
\draw (0,-0.9) node(3)[inner sep=0.15ex,circle,fill=black]{};
\node [left] at (0,-0.9) {\tiny 3};
\draw (root) -- (3);
\end{tikzpicture} & \begin{tikzpicture}
\draw (0,0.2) node{};
\draw (0,0) node(root)[inner sep=0.3ex,circle,fill=black]{};
\draw (0,-0.3) node(0)[inner sep=0.15ex,circle,fill=black]{};
\node [left] at (0,-0.3) {\tiny 0};
\draw (0.5,-0.6) node(1)[inner sep=0.15ex,circle,fill=black]{};
\node [right] at (0.5,-0.6) {\tiny 1};
\draw (-0.5,-0.9) node(3)[inner sep=0.15ex,circle,fill=black]{};
\node [left] at (-0.5,-0.9) {\tiny 3};
\draw (-0.5,-0.6) node(2)[inner sep=0.15ex,circle,fill=black]{};
\node [left] at (-0.5,-0.6) {\tiny 2};
\draw (root) -- (0);
\draw (1) -- (0);
\draw (3) -- (2);
\draw (0) -- (2);
\end{tikzpicture} & \begin{tikzpicture}
\draw (0,0.2) node{};
\draw (0,0) node(root)[inner sep=0.3ex,circle,fill=black]{};
\draw (0,-0.45) node(1)[inner sep=0.15ex,circle,fill=black]{};
\node [left] at (0,-0.45) {\tiny 1};
\draw (-0.5,-0.9) node(3)[inner sep=0.15ex,circle,fill=black]{};
\node [left] at (-0.5,-0.9) {\tiny 3};
\draw (0.5,-0.45) node(0)[inner sep=0.15ex,circle,fill=black]{};
\node [right] at (0.5,-0.45) {\tiny 0};
\draw (-0.5,-0.45) node(2)[inner sep=0.15ex,circle,fill=black]{};
\node [left] at (-0.5,-0.45) {\tiny 2};
\draw (root) -- (0);
\draw (root) -- (1);
\draw (2) -- (3);
\draw (root) -- (2);
\end{tikzpicture}\\\hline
\end{tabular}
    \caption{Bijection between $\Pf_n(123,132)$ and $\mathcal{T}^{\text{odd}}_{n+1}$ defined by $\Phi_n$ and $\Psi_n$ for $n=0,1,2,3$.}\label{fig:123132example}
\end{figure}

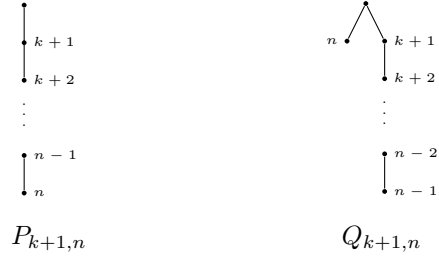
\begin{figure}[h]
    \centering
    \renewcommand{\arraystretch}{1.2}
\newcolumntype{F}{ >{\centering\arraybackslash} m{4cm} }

\begin{tabular}{FF}
\begin{tikzpicture}
\draw (0,0.5) node{};
\draw (0,0) node(root)[inner sep=0.15ex,circle,fill=black]{};
\draw (0,-0.5) node(0)[inner sep=0.15ex,circle,fill=black]{};
\draw (0,-1) node(1)[inner sep=0.15ex,circle,fill=black]{};

\draw (0,-2) node(3)[inner sep=0.15ex,circle,fill=black]{};
\draw (0,-2.5) node(4)[inner sep=0.15ex,circle,fill=black]{};
\draw (root) -- (1);
\draw (3) -- (4); 

\node [right] at (0,-0.5) {\tiny $k+1$};
\node [right] at (0,-1) {\tiny $k+2$};
\node at (0,-1.35) {\tiny $\vdots$};
\node [right] at (0,-2) {\tiny $n-1$};
\node [right] at (0,-2.5) {\tiny $n$};
\end{tikzpicture} 
& \begin{tikzpicture}
\draw (0,0.5) node{};
\draw (0,0) node(root)[inner sep=0.15ex,circle,fill=black]{};
\draw (0.25,-0.5) node(0)[inner sep=0.15ex,circle,fill=black]{};
\draw (0.25,-1) node(1)[inner sep=0.15ex,circle,fill=black]{};

\draw (0.25,-2) node(3)[inner sep=0.15ex,circle,fill=black]{};
\draw (0.25,-2.5) node(4)[inner sep=0.15ex,circle,fill=black]{};
\draw (-0.25,-0.5) node(5)[inner sep=0.15ex,circle,fill=black]{};
\draw (5) -- (root) -- (0) -- (1);
\draw (3) -- (4); 

\node [right] at (0.25,-0.5) {\tiny $k+1$};
\node [right] at (0.25,-1) {\tiny $k+2$};
\node at (0.25,-1.35) {\tiny $\vdots$};
\node [right] at (0.25,-2) {\tiny $n-2$};
\node [right] at (0.25,-2.5) {\tiny $n-1$};
\node [left] at (-0.25,-0.5) {\tiny $n$};
\end{tikzpicture}\\
$P_{k+1,n}$ & $Q_{k+1,n}$
\end{tabular}
    \caption{The two labelled graphs $P_{k+1,n}, Q_{k+1,n}$ used in the definition of $\Phi_n$.}\label{fig:pqexample}
\end{figure}

\mbox{}

\textbf{Case 1.} The first cluster of $f$ is an extend cluster. Define $\Phi_n(f)$ to be the rooted tree obtained by attaching the unlabelled vertex in $P_{k+1,n}$ to the vertex in $\Phi_k(f')$ with label $k$. In this case, we say that we have performed an \textit{extend operation} to the vertex with label $k$. 

\mbox{}

\textbf{Case 2.} The first cluster of $f$ is a branch cluster. Define $\Phi_n(f)$ by attaching the unlabelled vertex in $Q_{k+1,n}$ to the vertex in $\Phi_k(f')$ with label $k$. In this case, we say that we have performed a \textit{branch operation} to the vertex with label $k$. 

\mbox{}

\textbf{Case 3.} The first cluster of $f$ is a jump cluster. In this case, we will perform a \textit{jump operation} to some vertex in $\Phi_k(f')$, which will depend on where the unique empty block in this jump cluster lies in $f$.

\hspace{\parindent}\textbf{Case 3.1.} The unique empty block in this cluster lies in an extend cluster consisting of blocks $\{a\},\cdots,\{b+1\}$, and appears ahead of the block $\{\ell\}$, where $b+1\leq\ell\leq a$. Define $\Phi_n(f)$ by attaching the unlabelled vertex in $Q_{k+1,n}$ to the vertex of $\Phi_k(f')$ with label $\ell-1$.

\hspace{\parindent}\textbf{Case 3.2.} The unique empty block in this cluster lies in a branch cluster consisting of blocks $\{a-1\},\cdots,\{b+1\},\{a\}$ ahead of the block $\{\ell\}$, where again $b+1\leq\ell\leq a$. If $b+1\leq\ell\leq a-1$, define $\Phi_n(f)$ by attaching the unlabelled vertex in $Q_{k+1,n}$ to the vertex in $\Phi_k(f')$ with label $\ell$, while if $\ell=a$, define $\Phi_n(f)$ by attaching the unlabelled vertex in $Q_{k+1,n}$ to the vertex in $\Phi_k(f')$ with label $b$.

\hspace{\parindent}\textbf{Case 3.3.} The unique empty block in this cluster lies in another jump cluster whose main portion consists of blocks $\{a-1\},\cdots,\{b+2\},\{b+1,a\}$, and appears ahead of the block containing $\ell$ for some $b+1\leq\ell\leq a-1$. Define $\Phi_n(f)$ by attaching the unlabelled vertex in $Q_{k+1,n}$ to the vertex in $\Phi_k(f')$ with label $\ell$. 

\hspace{\parindent}\textbf{Case 3.4.} The unique empty block in this cluster does not lie in another cluster of $f'$, or equivalently it is the last block of $f$. Define $\Phi_n(f)$ by attaching the unlabelled vertex in $Q_{k+1,n}$ to the root of $\Phi_k(f')$.

\mbox{}

See Figure \ref{fig:123132big} for an example where we compute $\Phi_{25}(f)$ for a parking function $f\in\Pf_{25}(123,132)$.

\begin{figure}
    \centering
    \input{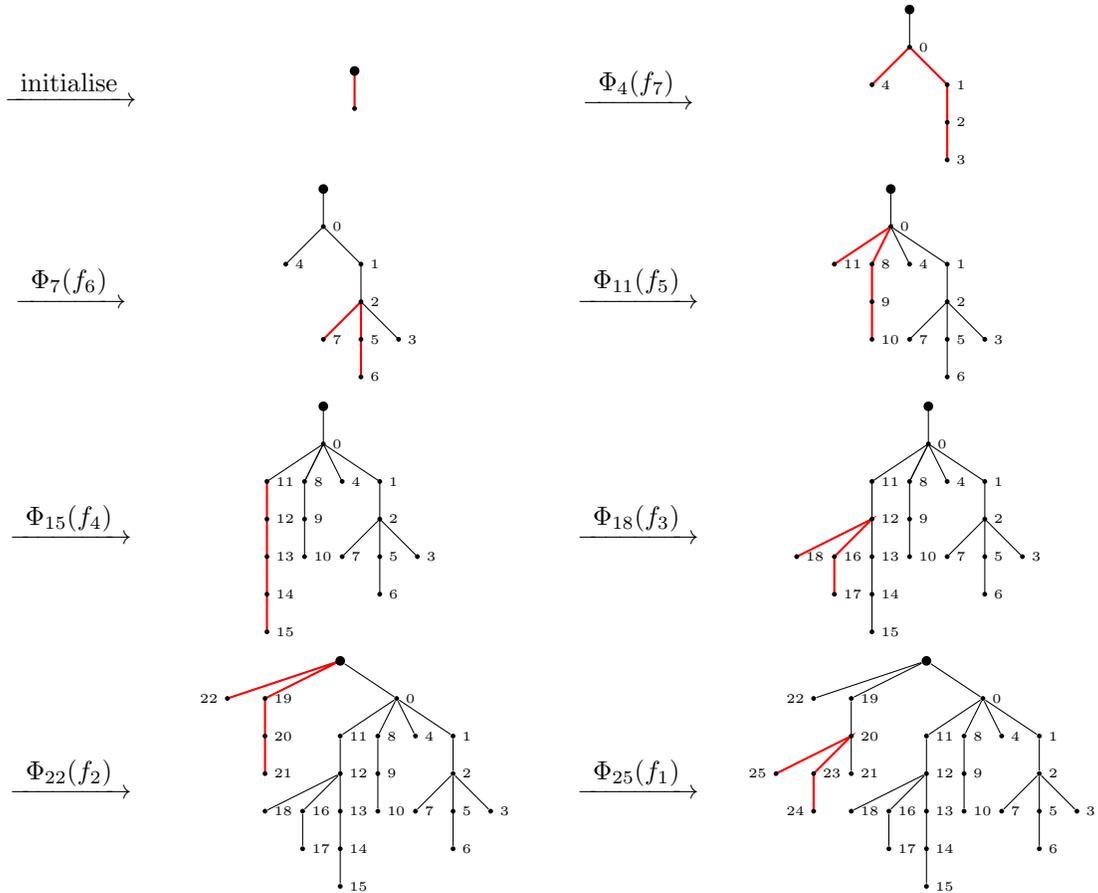}
    \caption{An example computing $\Phi_{25}(f)$, where $f=(\{24\},\{23,25\},\{21\},\emptyset,\{20\},\{19,22\},\{17\},\{16,\\18\},\{15\},\{14\},\emptyset,\{13\},\{12\},\{10\},\{9\},\{8,11\},\{6\},\{5,7\},\{3\},\emptyset,\{2\},\{1\},\emptyset,\{4\},\emptyset)$. The cluster that each block belongs to and the cluster types are listed in the tables above. For each $i\in[7]$, $f_i$ is the parking function obtained by only retaining the $i$-th to 7-th clusters of $f$. In each step, the edges added are highlighted in red.}\label{fig:123132big}
\end{figure}

Before moving on, we prove two properties of the function $\Phi_n$ that we will use in the definitions of $\Psi_n$ and to justify bijectivity.
\begin{claim}\label{leftmost}
Suppose $\Phi_n(f)=T$ and $u\in T$ is a vertex with $m\geq 3$ branches $B_1,B_2,\cdots,B_m$, listed from left to right. Then every vertex in $B_1, B_2$ is created later than all vertices in $B_i$ for all $3\leq i\leq m$. 
\end{claim}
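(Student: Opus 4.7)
The plan is to track the chronological sequence of operations that attach new branches to a fixed vertex $u$, and show that when $u$ ends up with at least three branches, the latest such operation must be a branch or jump operation, both of which add two new branches at the leftmost positions of $u$ simultaneously. This will force $B_1$ and $B_2$ to be exactly these two newly added branches, yielding the conclusion.

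The key observation, which follows directly from the definitions in Cases 1, 2, 3.1--3.4 above, is that an extend or branch operation always attaches its new structure at the vertex of maximum label in the current tree, namely the vertex added most recently in the recursion. Consequently, such an operation can attach to $u$ only at the step immediately following $u$'s creation (and only if $u$ happens to be the vertex of maximum label at that moment), and in particular at most one extend-or-branch operation across the entire construction can attach to $u$. Every subsequent operation attaching to $u$ must therefore be a jump operation, each of which adds two new branches at the two leftmost positions of $u$.

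Next, let $O_1, O_2, \ldots, O_t$ denote the operations that attach branches to $u$, listed in chronological order. By the observation above, $O_1$ contributes either $1$ branch (if $O_1$ is an extend) or $2$ branches (if $O_1$ is a branch or jump), while each $O_i$ for $i \geq 2$ is a jump contributing $2$ branches. A straightforward count then gives that $u$ has $m \geq 3$ branches precisely when $t \geq 2$, in which case $O_t$ is necessarily a jump operation.

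Finally, since every operation places its new branches at the current leftmost positions of $u$, the two branches introduced by $O_t$ are precisely $B_1$ and $B_2$, whereas $B_3, \ldots, B_m$ consist of branches already present at the moment $O_t$ is performed and originating from $O_1, \ldots, O_{t-1}$. Hence every vertex in $B_1$ or $B_2$ is created at the step of $O_t$, which is strictly later than the step of any earlier operation whose contributions make up $B_3, \ldots, B_m$, giving the claim. I do not anticipate a serious obstacle here; the whole argument is essentially chronological bookkeeping controlled by the recursive definition of $\Phi_n$, with the single non-trivial ingredient being the restriction that only jumps can attach to a vertex after it stops being the maximum-label vertex.
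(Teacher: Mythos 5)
There is a genuine gap. Your chronological bookkeeping correctly establishes the first half of the argument: at most one extend-or-branch operation can ever attach to $u$ (and only immediately after $u$'s creation), so when $u$ ends up with $m\geq 3$ branches the last operation attaching branches to $u$ is a jump operation $O_t$, and the two branches it creates sit at the leftmost positions and are therefore $B_1$ and $B_2$. This matches the opening of the paper's proof. The problem is the final step, where you assert that $B_3,\cdots,B_m$ ``consist of branches already present at the moment $O_t$ is performed.'' The branches $B_3,\cdots,B_m$ are subtrees of the \emph{final} tree $T$, and nothing you have said prevents a \emph{later} jump operation (one corresponding to a cluster appearing earlier in $f$, hence performed after $O_t$ in the recursion) from attaching its two new branches to some vertex $u'$ lying inside $B_i$ for $i\geq 3$. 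Such an operation would create vertices of $B_i$ strictly after the initial vertices of $B_1,B_2$, which is exactly what the claim forbids; ruling this out is the actual content of the claim, not a consequence of the leftmost-placement convention.

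The paper closes this gap with a contradiction argument that your proposal does not touch: if a jump operation for a cluster $C'$ (appearing before $C$ in $f$, where $C$ is the jump cluster of $O_t$) attaches to a vertex $u'\in B_i$, then since $u$ is created before $u'$, the placement rules in Cases 3.1--3.4 force the empty block of $C'$ to appear in $f$ before the empty block of $C$, while the size-$2$ block of $C'$ also appears before that of $C$; this violates the balanced-bracket matching between size-$2$ blocks and empty blocks established in the preliminary analysis. Without some argument of this kind --- tying the tree-side chronology back to the combinatorial constraint on where empty blocks can sit in the parking function --- the claim does not follow from operation ordering alone.
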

\begin{proof}[Proof of claim]
Since $B_1,B_2$ are the two left-most branches of $u$ which has at least three branches, from the definition of $\Phi_n$, we see that the first few vertices in branches $B_1,B_2$ must be created together by performing a jump operation to the vertex $u$, which corresponds to a jump cluster $C$ in $f$. Suppose for a contradiction that there exists a vertex in $B_i$ for some $3\leq i\leq m$ that is created after this jump operation, then this can happen only if we perform a jump operation to some vertex $u'\in B_i$, that corresponds to a jump cluster $C'$ in $f$ appearing before $C$. However, since $u$ is created before $u'$ in $T$, from the definition of $\Phi_n$, we see that the unique empty block in the jump cluster $C'$ appears in $f$ before the unique empty block in the jump cluster $C$. This is a contradiction to how the empty blocks are matched, proving the claim. 
\end{proof}

We say that a vertex in a rooted tree is a \textit{branching vertex} if it has at least two children. 

\begin{claim}\label{1or2}
Suppose $\Phi_n(f)=T$ and $u\in T$ is a vertex whose two left-most branches are $B_1,B_2$, with $B_1$ on the left of $B_2$. Suppose $B_2$ contains a branching vertex, then after the initial branch or jump operation that creates the first few vertices in $B_1$ and $B_2$, all remaining vertices in $B_1$ are created before any other vertices in $B_2$ are. In particular, the last jump or branch operation in the creation of $B_1$ and $B_2$ is performed to a vertex in $B_2$.
\end{claim}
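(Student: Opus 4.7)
The plan is to first verify that the operation jointly creating the initial vertices of $B_1$ and $B_2$ is well-defined, and then to establish the chronology claim by a bracket-matching contradiction, from which part (ii) will follow immediately. First, I would show that the two leftmost branches of $u$ in $T$ are necessarily introduced together by a single branch or jump operation $O^*$ performed at $u$. The key observation is that an extend or branch operation at $u$ can occur at most once---namely at the unique moment in the construction when the current size of the parking function equals the label of $u$---so any further operation at $u$ is a jump, each contributing exactly two new leftmost branches. Thus whenever $u$ has at least two children, the two leftmost must come from a single operation, which is the most recent operation at $u$. Let $O^*$ correspond to the cluster $C^*$ in $f$ with main-portion labels $k+1,\dots,n$, so that immediately after $O^*$ we have $B_1=\{v_n\}$ and $B_2$ is the chain $v_{k+1}\to v_{k+2}\to\cdots\to v_{n-1}$; since $B_1, B_2$ remain the two leftmost branches in the final tree $T$, no operation after $O^*$ targets $u$ itself.

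For the main claim, I argue by contradiction. Suppose $P$ is the first operation after $O^*$ that adds a vertex to $B_2$, and suppose some later operation $Q$ adds a vertex to $B_1$; choose $Q$ to be the first such. Extend and branch operations always target the vertex carrying the current maximum label, and between $O^*$ and $P$ only $B_1$ can grow, so at the time of $P$ the maximum label still lies in $B_1$, forcing $P$ to be a jump; immediately after $P$ the maximum label shifts into $B_2$, so $Q$ likewise must be a jump. Since the vertices of $B_2$ at the moment of $P$ are precisely the initial chain from $C^*$, the matched empty block of $P$ must lie inside $C^*$'s main portion. Analogously, the matched empty block of $Q$ lies inside the main portion of some cluster $C_R$ whose operation contributed $Q$'s target to $B_1$. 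Examining the four sub-cases of the jump rule shows that the target label of any jump is strictly smaller than the top label of the cluster it references, so $Q$'s target cannot be $v_n$; together with the minimality of $Q$, this forces $C_R$ to lie strictly between $C_P$ and $C^*$ in $f$.

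The contradiction now comes from the bracket-matching of size-$2$ blocks and their matched empty blocks in $f$. With $C_Q, C_P, C_R, C^*$ occurring in $f$ in this left-to-right cluster order, the four $f$-positions of the openers (size-$2$ blocks) and closers (empty blocks) of $P$ and $Q$ satisfy
$$p_Q^{\mathrm{o}}<p_P^{\mathrm{o}}<p_Q^{\mathrm{c}}<p_P^{\mathrm{c}},$$
which is exactly the crossing configuration forbidden in a balanced bracket expression. I expect the most delicate part of the proof to be the case analysis ruling out $Q$'s target being $v_n$ and locating $C_R$ strictly between $C_P$ and $C^*$; once these are in place, the bracket-crossing gives the contradiction immediately, establishing part (i). Part (ii) then follows instantly, since $B_2$ is initially a chain yet contains a branching vertex, so at least one jump or branch operation after $O^*$ must target a vertex in $B_2$; by part (i) the chronologically last jump or branch operation acting on $B_1 \cup B_2$ must target $B_2$.
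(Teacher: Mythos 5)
Your overall strategy is exactly the paper's: show the two left-most branches of $u$ are born together from a single branch or jump operation, take the first operation $P$ that re-enters $B_2$ and the first subsequent operation $Q$ that re-enters $B_1$, observe both must be jumps, and derive a contradiction from the positions of their matched empty blocks relative to the non-crossing bracket matching of size-$2$ blocks and empty blocks. Your write-up is in fact more explicit than the paper's about needing the full crossing configuration $p_Q^{\mathrm{o}}<p_P^{\mathrm{o}}<p_Q^{\mathrm{c}}<p_P^{\mathrm{c}}$ rather than merely $p_Q^{\mathrm{c}}<p_P^{\mathrm{c}}$.

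However, there is a genuine flaw in the step where you rule out $Q$ targeting $v_n$. Your premise that ``the matched empty block of $Q$ lies inside the main portion of some cluster $C_R$ whose operation contributed $Q$'s target to $B_1$'' is false precisely when the target is the top-labelled vertex of its creating cluster: by Cases 3.1 and 3.2 of the definition of $\Phi_n$, a jump whose target carries label $n$ places its empty block in the cluster containing $n+1$, i.e.\ the cluster \emph{immediately preceding} $C^*$ in $f$, not in $C^*$ itself. Consequently your deduction ``$Q$'s target cannot be $v_n$'' does not follow, and that case is both possible and left uncovered by your argument as written (your localisation of $C_R$ strictly between $C_P$ and $C^*$ is only justified for extension vertices of $B_1$). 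The case is repairable by the same crossing argument: if $Q$ targets $v_n$, its empty block sits in the cluster $D$ containing $n+1$, which must be an extend or branch cluster (a jump cluster cannot be referenced to target the label below its smallest element), hence $D\neq C_P$ and $D$ lies strictly after $C_P$ and strictly before $C^*$ in $f$, so the crossing $p_Q^{\mathrm{o}}<p_P^{\mathrm{o}}<p_Q^{\mathrm{c}}<p_P^{\mathrm{c}}$ still obtains. The paper's own proof sidesteps this by recording only that the empty block of $Q$ lies in \emph{some} cluster appearing before $C^*$ while that of $P$ lies in $C^*$, which is true in all cases; with that correction your argument goes through and the concluding sentence about part (ii) is fine.
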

\begin{proof}[Proof of claim]
From the definition of $\Phi_n$, the first vertices in branches $B_1,B_2$ must be created together by performing an operation that corresponds to a branch or jump cluster $C$ in $f$. After this operation, we have created a single vertex $u'$ in $B_1$ and a path $P$ in $B_2$. Since $B_2$ contains a branching vertex, at some point we must apply a jump operation to some vertex on $P$ to create new branches. Let $C_2$ be the jump cluster in $f$ corresponding to the first such jump operation, and assume the operation is performed to $u_2\in P$. If after this, for a contradiction, some more vertices in $B_1$ are created, then there must be a jump operation performed to some vertex $u_1$ in $B_1$. Let $C_1$ be the corresponding jump cluster in $f$. Note that $C_1$ appears before $C_2$ in $f$, the unique empty block in $C_2$ lies inside the cluster $C$, while the unique empty block in $C_1$ lies in a cluster appearing before $C$ in $f$. This is a contradiction to how the empty blocks are matched, proving the claim.
\end{proof}

\mbox{}

\noindent$\bullet\mbox{ }\Psi_n:\mathcal{T}^{\text{odd}}_{n+1}\to\Pf_n(123,132)$.

We define $\Psi_n$ inductively. The definition of $\Psi_0,\Psi_1,\Psi_2$ and $\Psi_3$ are given in Figure \ref{fig:123132example}. 

Now assume $n\geq 4$ and let $T\in\mathcal{T}^{\text{odd}}_{n+1}$. If $T$ is just a path of length $n+1$, let $\Psi_n(T)$ be the parking function consisting of a single extend cluster of length $n$. Otherwise, we run the following algorithm which, as we will show below, identifies the vertex in tree $T$ to which the operation corresponding to the first branch or jump cluster in $\Psi_n(T)$ is performed. 

To initialise, begin at the root of $T$.

Step 1: Descend until we first reach a branching vertex, then go to Step 2.

Step 2: Look at the two left-most branches $B_1$ and $B_2$ of the current vertex, where $B_1$ is to the left of $B_2$. If both $B_1$ and $B_2$ contain no branching vertex, stop. If $B_2$ contains no branching vertex but $B_1$ does, then repeat Step 1 in $B_1$. If $B_2$ contains a branching vertex, then repeat Step 1 in $B_2$. 

\begin{claim}\label{correctalgorithm}
If $\Phi_n(f)=T$, then the algorithm applied to $T$ identifies the vertex to which the operation corresponding to the first branch or jump cluster in $f$ is performed. 
\end{claim}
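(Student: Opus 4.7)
The plan is to show that the vertex $v$ to which the operation for the first branch/jump cluster $C_i$ is applied during the construction of $T=\Phi_n(f)$ is precisely the vertex the algorithm locates. The first observation I would make is that, because operations in the construction are applied in reverse order of clusters, and because extend operations do not create any branching vertices, the operation corresponding to $C_i$ is the \emph{last} branch/jump operation performed in building $T$. This operation attaches the root of $Q_{k+1,n_i}$ to $v$ and so gives $v$ at least two new children; hence $v$ itself is a branching vertex of $T$, which guarantees that Step 1 of the algorithm (descending through non-branching vertices) cannot bypass $v$ en route to a branching vertex.

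The heart of the argument will be a local case analysis at each branching vertex $u$ visited by the algorithm, with $B_1,B_2$ denoting its two leftmost branches. By Claim \ref{leftmost}, any vertex in the subtree rooted at $u$ lying outside $B_1\cup B_2$ is created earlier than every vertex in $B_1\cup B_2$; so if $v$ lies strictly below $u$ it must lie in $B_1$ or $B_2$. I would then split into three cases: (i) if $v=u$, then $B_1,B_2$ are created by the final branch/jump operation and no subsequent operation touches $u$'s subtree, so neither branch contains a branching vertex and the algorithm correctly stops at $u$; (ii) if $v\in B_2$, then $B_2$ contains a branching vertex and the algorithm recurses into $B_2$; (iii) if $v\in B_1$, then $B_1$ contains a branching vertex and $B_2$ cannot, for otherwise Claim \ref{1or2} would force the last branch/jump operation on $B_1\cup B_2$ to lie in $B_2$, contradicting $v\in B_1$, so the algorithm recurses into $B_1$.

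Iterating this analysis along the path from the root to $v$ will complete the proof, since Claims \ref{leftmost} and \ref{1or2} apply at every branching vertex of $T$ regardless of depth. The most delicate step will be case (iii), where I will need to rule out branching inside $B_2$; this is precisely where the strength of Claim \ref{1or2} is essential. Everywhere else the argument reduces to tracking which branches at each branching vertex were created most recently, which Claim \ref{leftmost} handles directly.
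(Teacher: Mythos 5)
Your proposal is correct and follows essentially the same route as the paper's proof: an induction along the algorithm's path using Claim \ref{leftmost} to confine the target vertex $v$ to the two left-most branches of each branching vertex visited, and Claim \ref{1or2} to decide between $B_1$ and $B_2$. The only cosmetic difference is that you track the location of $v$ and deduce the branching structure, whereas the paper reads off the branching structure and deduces where the last branch or jump operation was performed; the two formulations are equivalent.
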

\begin{proof}
In each iteration of the algorithm, suppose the current vertex is $u$, with its branches listed from left to right being $B_1,B_2,\cdots,B_m$. By Claim \ref{leftmost}, since $B_1$ and $B_2$ are created later than everything in $B_i$ for $3\leq i\leq m$, the last branch or jump operation is performed to either $u$ if $B_1,B_2$ both contain no branching vertex, in which case the algorithm terminates and $u$ is correctly returned, or to a vertex in $B_1$ or $B_2$ if at least one of them contains a branching vertex. In the latter case, if $B_2$ contains no branching vertex, then $B_1$ does and the last branch or jump operation is therefore performed to a vertex in $B_1$, and the algorithm correctly moves to $B_1$ and iterates. Otherwise, $B_2$ contains a branching vertex and by Claim \ref{1or2}, the last branch or jump operation is performed to a vertex in $B_2$, and the algorithm correctly moves to $B_2$ and iterates, proving the claim.
\end{proof}

Note that the output of this algorithm will always be a branching vertex $v$ of $T$ whose two left-most branches both contain no branching vertex, in other words, these two branches are paths. Say they are $P_1,P_2$ with $\ell_1,\ell_2$ vertices, respectively, with $P_1$ to the left of $P_2$ and $\ell_1+\ell_2=n-k$ for some $0\leq k\leq n-2$. Let $T'$ be the subtree of $T$ obtained by deleting the vertices in $P_1,P_2$. Note that $T'\in\mathcal{T}^{\text{odd}}_{k+1}$. From induction hypothesis, if $f'=\Psi_{k}(T')$, then $T'=\Phi_{k}(f')$. Hence, from the definition of $\Phi_{k}(f')$, we obtain a vertex labelling of $T'$. We split into several cases depending on the size of $\ell_1$ and the label of $v$ in $T'$. 

\mbox{}

\textbf{Case 1.} $\ell_1>1$. Let $T_1$ be the subtree of $T$ obtained by deleting the all vertices in $P_1$, except the one adjacent to $v$. Note that $T_1\in\mathcal{T}^{\text{odd}}_{n-\ell_1+1}$. Define $\Psi_n(T)$ to be parking function on $[n]$ obtained by attaching an extend cluster consisting of blocks $\{n\},\{n-1\},\cdots,\{n-\ell_1+2\}$ to the front of $\Psi_{n-\ell_1+1}(T_1)$.

\mbox{}

\textbf{Case 2.} $\ell_1=1$ and the label of $v$ in $T'$ is $k$. Let $\Psi_n(T)$ be the parking function on $[n]$ obtained by attaching a branch cluster consisting of blocks $\{n-1\},\cdots,\{k+1\},\{n\}$ to the front of $f'=\Psi_{k}(T')$.

\mbox{}

\textbf{Case 3.} $\ell_1=1$ and the label of $v$ in $T'$ is not $k$. It follows that if the label of $v$ in $T'$ is the largest element in a cluster of $f'$, then this cluster is not the first cluster in $f'$.

\hspace{\parindent}\textbf{Case 3.1.} The label of $v$ in $T'$ is $\ell$, and $\ell+1$ is in an extend cluster in $f'$. Let $\Psi_n(T)$ be the parking function on $[n]$ obtained by attaching a jump cluster to $f'=\Psi_{k}(T')$, where we attach the main part consisting of the blocks $\{n-1\},\cdots,\{k+2\},\{k+1,n\}$ to the front of $f'$ and insert the unique empty block ahead of the block $\{\ell+1\}$ in $f'$.

\hspace{\parindent}\textbf{Case 3.2.} The label of $v$ in $T'$ is $\ell$, and $\ell+1$ is in a branch cluster in $f'$. If $\ell+1$ is the smallest element in this branch cluster, let $\Psi_n(T)$ be the parking function on $[n]$ obtained by attaching a jump cluster to $f'=\Psi_{k}(T')$, where we attach the main portion consisting of the blocks $\{n-1\},\cdots,\{k+2\},\{k+1,n\}$ to the front of $f'$ and insert the unique empty block ahead of the last block (the one containing the largest element) in this branch cluster. Otherwise, let $\Psi_n(T)$ be the parking function on $[n]$ obtained by attaching a jump cluster to $f'=\Psi_{k}(T')$, where we attach the main portion consisting of the blocks $\{n-1\},\cdots,\{k+2\},\{k+1,n\}$ to the front of $f'$ and insert the unique empty block ahead of the block $\{\ell\}$. 

\hspace{\parindent}\textbf{Case 3.3.} The label of $v$ in $T'$ is $\ell$, and $\ell+1$ is in a jump cluster $C$ in $f'$. We claim that $\ell+1$ cannot be the smallest element in $C$. Indeed, this would mean that $\ell$ is the largest element in the cluster after $C$, and the jump operation corresponding to the cluster $C$ jumps away from the vertex with label $\ell$, namely $v$, to some other vertex $u$ and creates two new branches there. Note that from the definition of $\Phi_k$ and that $\ell$ is the largest element in its cluster, $v$ must be a leaf in $T'$. Let $w$ be the common ancestor of $u$ and $v$ in $T'$ farthest away from the root. If $w=u$, then $u$ is an ancestor of $v$ in $T$ with at least three branches, with $v$ not in the two left-most ones. This contradicts that the algorithm applied to $T$ returns $v$. We cannot have $w=v$ as $v$ is a leaf in $T'$. Hence, $v$ and $u$ are strict descendants of $w$, and are in different branches $B_u$ and $B_v$ below $w$ by the choice of $w$. 

If $B_u$ is to the left of $B_v$, and $B_v$ is at least the third left-most branch below $w$, then we have a contradiction to Claim \ref{leftmost} as $u$ in $B_u$ is created earlier than $v$ in $B_v$ in $T'$. Thus, $B_u$ and $B_v$ are the two left-most branches below $w$. Since the label of $v$ is the largest in its cluster, $v$ is not created by the initial branch or jump operation that creates the first vertices of $B_u$ and $B_v$. Thus, $B_v$ must contain a branching vertex, as otherwise $v$ cannot be created. But then we have a contradiction to Claim \ref{1or2} as the jump operation corresponding to cluster $C$ attaches new branches to $u\in B_u$ after $v\in B_v$ is created.

If $B_u$ is to the right of $B_v$, and $B_u$ is at least the third left-most branch below $w$, then we have a contradiction to Claim \ref{leftmost} as the vertices created by the jump operation corresponding to $C$ in $B_u$ are created later than $v$ in $B_v$ in $T'$. Thus, $B_v$ and $B_u$ are the two left-most branches below $w$. But this contradicts that the algorithm applied to $T$ returns $v$, as $B_u$ contains a branching vertex $u$, so the algorithm should have returned a vertex inside $B_u$. This proves the claim.  

Hence, it follows that $\ell$ and $\ell+1$ are both in the jump cluster $C$. Let $\Psi_n(T)$ be the parking function on $[n]$ obtained by attaching a jump cluster to $f'=\Psi_{k}(T')$, where we attach the main portion consisting of the blocks $\{n-1\},\cdots,\{k+2\},\{k+1,n\}$ to the front of $f'$ and insert the unique empty block ahead of the block in $C$ containing $\ell$.

\hspace{\parindent}\textbf{Case 3.4.} $\ell_1=1$ and $v$ is the root vertex of $T'$. Let $\Psi_n(T)$ be the parking function on $[n]$ obtained by attach a jump cluster to $f'=\Psi_{k}(T')$, where we attach the main portion consisting of the blocks $\{n-1\},\cdots,\{k+2\},\{k+1,n\}$ to the front of $f'$ and insert the unique empty block at the end of $f'$.

\mbox{}

\noindent$\bullet$\mbox{ } Proof of bijectivity.

We first show that every $T\in\mathcal{T}^{\text{odd}}_{n+1}$ satisfies $\Phi_n(\Psi_n(T))=T$. This can be checked easily for $n\leq 3$, and when $T$ is just a path of length $n+1$. Otherwise, let $v,k,T',\ell_1$ be as in the definition of $\Psi_n(T)$ above. From induction hypothesis, $\Phi_k(\Psi_k(T'))=T'$. Note that from the definition above, $\Psi_n(T)$ is obtained by appropriately attaching a branch cluster or a jump cluster $C$ to the front of $\Psi_k(T')$, and then attach a further extend cluster $C'$ to the front if $\ell_1>1$. Then by the definition of $\Phi_n$, $\Phi_n(\Psi_n(T))$ is obtained by performing the branch or jump operation corresponding to $C$ to some vertex in $\Phi_k(\Psi_k(T'))=T'$ according to the cluster $C$, and then appropriately perform the extend operation corresponding to $C'$. It is easy to see from the cases above that performing these operations on $T'$ gives us exactly $T$, thus $\Phi_n(\Psi_n(T))=T$. 

We now show that every $f\in\Pf_n(123,132)$ satisfies $\Psi_n(\Phi_n(f))=f$. Note that from the definition there cannot be two or more consecutive extend clusters in $f$. Let $f'$ be obtained by removing from $f$ its first branch or jump cluster along with the at most one extend cluster that might exist before it, and note that $f'\in\Pf_k(123,132)$ for some $0\leq k\leq n-2$. Let $T'$ be the subtree of $T=\Phi_n(f)$ obtained by removing the two branches $P_1,P_2$ of the vertex $v\in T$ output by the algorithm. By Claim \ref{correctalgorithm}, $v$ is the vertex to which the last branch or jump operation is performed, which corresponds to the first branch or jump cluster in $f$. Thus, $P_1$ and $P_2$ are exactly the part of $T$ created by the first branch or jump cluster in $f$ and the extend cluster that may exist before it. It follows that $\Phi_k(f')=T'$ as $T'$ is the part of $T$ created by the remaining clusters in $f$, or in other words those in $f'$. Thus, $\Psi_k(T')=\Psi_k((\Phi_k(f'))=f'$ by induction hypothesis. Finally, from the definition of $\Psi_n$, $\Psi_n(\Phi_n(f))=\Psi_n(T)$ is obtained by appropriately attaching to the front of the parking function $\Psi_k(T')=f'$ a branch or jump cluster and possibly another extend cluster that corresponds to the branches $P_1,P_2$, which are exactly the clusters removed from $f$ to obtain $f'$. Hence, $\Psi_n(\Phi_n(f))=f$, completing the proof.
\end{proof}

\subsection{Bijective proof of Theorem \ref{thm:main2}}\label{123213}
Again, we begin with some preliminary analysis of parking functions whose block permutations avoid the patterns 123 and 213, and introduce some concepts and notations that will be used in the proof below. Let $f\in\Pf_n(123,213)$, and let $\pi_f\in S_n$ be its associated block permutation. Suppose $\pi_f(1)=k+1$, where $0\leq k\leq n-1$. We claim that $\pi_f(i)=n+2-i$ for all $2\leq i\leq n-k$. Indeed, since $\pi_f$ avoids $123$, $n,n-1,\cdots,k+2$ must appear in $\pi_f$ in decreasing order, and if there exists some $m<k+1$ appearing between $k+1$ and $k+2$ in $\pi_f$, then $\pi_f$ contains the pattern 213, a contradiction. It follows from the definition of blocks that each of $k+1,n,n-1,\cdots,k+2$ appear in a block of size 1, except possibly $k+1$ and $n$ may appear together in a block of size 2. Moreover, since $\pi_f$ avoids $123$, we again have that every block in $f$ has size 0,1 or 2, and thus using condition \ref{blockcondition}, like in the 123,132 avoiding case, every size 2 block is matched with a unique empty block. 

We now recursively partition $f$ into what we called \textit{clusters}, each of which is a union of blocks in $f$, as follows. When $n=0$, the unique empty parking function contains no cluster. For $n\geq 1$:
\begin{itemize}
    \item If $k+1,n,n-1,\cdots,k+2$ all appear in blocks of size 1, then the first cluster of $f$ is defined to be the union of these $n-k$ blocks, and is called a \textit{closed cluster} with length $n-k$ and \textit{parameter} $n-k-1$.

    \item If $k+1$ and $n$ appear together in a block of size 2, followed by $n-1,\cdots,k+2$ each in individual blocks, then consider the unique empty block in $f$ matched with the size 2 block $\{k+1,n\}$. 
    \begin{itemize}
        \item If this empty block appears right after the block containing $k+2+\ell$, where $0\leq\ell\leq n-k-2$, then the first cluster of $f$ is defined to be the union of the first $n-k$ blocks of $f$ (which includes the empty block matched with $\{k+1,n\}$), and is called a \textit{closed cluster} with length $n-k$ and \textit{parameter} $\ell$.
        \item If this empty block appears after the block $\{k+2\}$ and not immediately after it, then the first cluster of $f$ is defined to be the union of the first $n-k-1$ blocks of $f$, along with this empty block, and we call it an \textit{open cluster} of length $n-k$.
    \end{itemize}
\end{itemize}
In all cases above, let $f'$ be obtained by removing all blocks in the first cluster, and note that $f'$ is a parking function in $\Pf_k(123,213)$. We then define the remaining clusters of $f$ to be the clusters of $f'$. Furthermore, we define the \textit{main portion} of each cluster to be the size 1 or 2 blocks inside the cluster. It follows from the definition above that if we ignore empty blocks, then the main portion of each cluster appears consecutively. Finally, for an open cluster, we say that its unique empty block lies \textit{inside} another cluster if it lies after a block in the main portion of this cluster. Note that this is different from the corresponding definition in the 123,132 case. See Figure \ref{fig:123213big} for an example of how the blocks of a parking function $f\in\Pf_{20}(123,213)$ are partitioned into clusters.

We define another concept that will be used in the proof below. Let $T$ be a rooted tree with its root $v$ having degree at least 2. We say that a rooted subtree $\overline{T}$ with root $\overline{v}$ is a \textit{full right subtree} of $T$ if the following conditions hold:
\begin{itemize}
    \item $\overline{v}$ is either equal to $v$ or is reachable from $v$ in $T$ by always going down the right-most branch of every branching vertex on the way.
    \item $\overline{T}$ is induced by $\overline{v}$ and a non-empty collection of consecutive branches below $\overline{v}$ starting from the right-most one.
\end{itemize}

Now we are ready to give a bijective proof of Theorem \ref{thm:main2}. 

\begin{proof}[Proof of Theorem \ref{thm:main2}]
Let $\mathcal{T}^{\geq2}_{n+1}$ be the set of ordered rooted trees with $n+1$ edges and root degrees at least 2, except for when $n=0$ we let $\mathcal{T}^{\geq2}_{1}$ be the set consisting of the unique rooted tree with 1 edge. We find a bijective correspondence between $\Pf_n(123,213)$ and $\mathcal{T}^{\geq2}_{n+1}$ for all $n\geq 0$.

\mbox{}

\noindent$\bullet\mbox{ }\Phi_n:\Pf_n(123,213)\to\mathcal{T}^{\geq2}_{n+1}$.

The definition of $\Phi_0,\Phi_1,\Phi_2$ and $\Phi_3$ are given in Figure \ref{fig:123213example}.  We define $\Phi_n$ for $n\geq 4$ inductively, and include the following condition in the induction hypothesis, which needs to be maintained in the definition below. 

\begin{figure}[t]
    \centering
    \renewcommand{\arraystretch}{1.2}
\newcolumntype{C}{ >{\centering\arraybackslash} m{0.8cm} }
\newcolumntype{D}{ >{\centering\arraybackslash} m{1.8cm} }
\newcolumntype{E}{ >{\centering\arraybackslash} m{2.4cm} }
\newcolumntype{F}{ >{\centering\arraybackslash} m{3cm} }

\begin{tabular}{F>{\centering\arraybackslash} m{0.1cm}F}
\begin{tabular}{|C|C|}
\hline $f$ & $()$\\\hline
$T$ & \begin{tikzpicture}
\draw (0,0.1) node{};
\draw (0,0) node(root)[inner sep=0.3ex,circle,fill=black]{};
\draw (0,-0.5) node(0)[inner sep=0.15ex,circle,fill=black]{};
\draw (root) -- (0);
\end{tikzpicture}\\\hline
\end{tabular}  & \quad & \begin{tabular}{|C|C|}
\hline$f$ & $(\{1\})$\\\hline
$T$ & \begin{tikzpicture}
\draw (0,0.1) node{};
\draw (0,0) node(root)[inner sep=0.3ex,circle,fill=black]{};
\draw (0.25,-0.5) node(0)[inner sep=0.15ex,circle,fill=black]{};
\draw (-0.25,-0.5) node(1)[inner sep=0.15ex,circle,fill=black]{};
\draw (root) -- (0);
\draw (root) -- (1);
\end{tikzpicture}\\\hline
\end{tabular}\\
\end{tabular}

\vspace{0.2cm}

\begin{tabular}{|C|D|D|D|}
\hline$f$ & $(\{1\},\{2\})$ & $(\{1,2\},\emptyset)$ & $(\{2\},\{1\})$ \\\hline
$T$ & \begin{tikzpicture}
\draw (0,0.1) node{};
\draw (0,0) node(root)[inner sep=0.3ex,circle,fill=black]{};
\draw (0.5,-0.25) node(0)[inner sep=0.15ex,circle,fill=black]{};
\draw (0.5,-0.5) node(1)[inner sep=0.15ex,circle,fill=black]{};
\draw (-0.5,-0.25) node(2)[inner sep=0.15ex,circle,fill=black]{};
\draw (root) -- (0);
\draw (1) -- (0);
\draw (2) -- (root);
\end{tikzpicture} & \begin{tikzpicture}
\draw (0,0.1) node{};
\draw (0,0) node(root)[inner sep=0.3ex,circle,fill=black]{};
\draw (0.5,-0.25) node(0)[inner sep=0.15ex,circle,fill=black]{};
\draw (-0.5,-0.25) node(1)[inner sep=0.15ex,circle,fill=black]{};
\draw (-0.5,-0.5) node(2)[inner sep=0.15ex,circle,fill=black]{};
\draw (root) -- (0);
\draw (1) -- (root);
\draw (2) -- (1);
\end{tikzpicture} & \begin{tikzpicture}
\draw (0,0.1) node{};
\draw (0,0) node(root)[inner sep=0.3ex,circle,fill=black]{};
\draw (0,-0.5) node(1)[inner sep=0.15ex,circle,fill=black]{};
\draw (0.5,-0.5) node(0)[inner sep=0.15ex,circle,fill=black]{};
\draw (-0.5,-0.5) node(2)[inner sep=0.15ex,circle,fill=black]{};
\draw (root) -- (0);
\draw (root) -- (1);
\draw (root) -- (2);
\end{tikzpicture}\\\hline
\end{tabular}

\vspace{0.2cm}

\begin{tabular}{|C|E|E|E|}
\hline$f$ & $(\{1\},\{3\},\{2\})$ & $(\{1,3\},\emptyset,\{2\})$ & $(\{1,3\},\{2\},\emptyset)$ \\\hline
$T$ & \begin{tikzpicture}
\draw (0,0.1) node{};
\draw (0,0) node(root)[inner sep=0.3ex,circle,fill=black]{};
\draw (0.5,-0.25) node(0)[inner sep=0.15ex,circle,fill=black]{};
\draw (0.5,-0.5) node(1)[inner sep=0.15ex,circle,fill=black]{};
\draw (0.5,-0.75) node(2)[inner sep=0.15ex,circle,fill=black]{};
\draw (-0.5,-0.25) node(3)[inner sep=0.15ex,circle,fill=black]{};
\draw (root) -- (0);
\draw (1) -- (0);
\draw (3) -- (root);
\draw (2) -- (1);
\end{tikzpicture} & \begin{tikzpicture}
\draw (0,0.1) node{};
\draw (0,0) node(root)[inner sep=0.3ex,circle,fill=black]{};
\draw (0.5,-0.25) node(0)[inner sep=0.15ex,circle,fill=black]{};
\draw (0.5,-0.5) node(1)[inner sep=0.15ex,circle,fill=black]{};
\draw (-0.5,-0.25) node(2)[inner sep=0.15ex,circle,fill=black]{};
\draw (-0.5,-0.5) node(3)[inner sep=0.15ex,circle,fill=black]{};
\draw (root) -- (0);
\draw (1) -- (0);
\draw (2) -- (root);
\draw (2) -- (3);
\end{tikzpicture} & \begin{tikzpicture}
\draw (0,0.1) node{};
\draw (0,0) node(root)[inner sep=0.3ex,circle,fill=black]{};
\draw (0.5,-0.25) node(0)[inner sep=0.15ex,circle,fill=black]{};
\draw (-0.5,-0.25) node(1)[inner sep=0.15ex,circle,fill=black]{};
\draw (-0.5,-0.5) node(2)[inner sep=0.15ex,circle,fill=black]{};
\draw (-0.5,-0.75) node(3)[inner sep=0.15ex,circle,fill=black]{};
\draw (root) -- (0);
\draw (1) -- (root);
\draw (2) -- (1);
\draw (2) -- (3);
\end{tikzpicture}\\
\hline$f$ & $(\{2\},\{3\},\{1\})$ & $(\{2,3\},\emptyset,\{1\})$ & $(\{2,3\},\{1\},\emptyset)$\\\hline
$T$ & \begin{tikzpicture}
\draw (0,0.1) node{};
\draw (0,0) node(root)[inner sep=0.3ex,circle,fill=black]{};
\draw (0.375,-0.25) node(0)[inner sep=0.15ex,circle,fill=black]{};
\draw (0.5,-0.5) node(1)[inner sep=0.15ex,circle,fill=black]{};
\draw (0.25,-0.5) node(2)[inner sep=0.15ex,circle,fill=black]{};
\draw (-0.5,-0.25) node(3)[inner sep=0.15ex,circle,fill=black]{};
\draw (root) -- (0);
\draw (1) -- (0);
\draw (3) -- (root);
\draw (2) -- (0);
\end{tikzpicture} & \begin{tikzpicture}
\draw (0,0.1) node{};
\draw (0,0) node(root)[inner sep=0.3ex,circle,fill=black]{};
\draw (0.5,-0.25) node(0)[inner sep=0.15ex,circle,fill=black]{};
\draw (0,-0.25) node(1)[inner sep=0.15ex,circle,fill=black]{};
\draw (-0.5,-0.25) node(2)[inner sep=0.15ex,circle,fill=black]{};
\draw (-0.5,-0.5) node(3)[inner sep=0.15ex,circle,fill=black]{};
\draw (root) -- (0);
\draw (1) -- (root);
\draw (2) -- (root);
\draw (2) -- (3);
\end{tikzpicture} & \begin{tikzpicture}
\draw (0,0.1) node{};
\draw (0,0) node(root)[inner sep=0.3ex,circle,fill=black]{};
\draw (-0.375,-0.25) node(1)[inner sep=0.15ex,circle,fill=black]{};
\draw (0.5,-0.25) node(0)[inner sep=0.15ex,circle,fill=black]{};
\draw (-0.25,-0.5) node(2)[inner sep=0.15ex,circle,fill=black]{};
\draw (-0.5,-0.5) node(3)[inner sep=0.15ex,circle,fill=black]{};
\draw (root) -- (0);
\draw (1) -- (3);
\draw (1) -- (root);
\draw (2) -- (1);
\end{tikzpicture}\\\hline
$f$ & $(\{3\},\{1\},\{2\})$ & $(\{3\},\{1,2\},\emptyset)$ & $(\{3\},\{2\},\{1\})$\\\hline
$T$ & \begin{tikzpicture}
\draw (0,0.1) node{};
\draw (0,0) node(root)[inner sep=0.3ex,circle,fill=black]{};
\draw (0.5,-0.25) node(0)[inner sep=0.15ex,circle,fill=black]{};
\draw (0,-0.25) node(1)[inner sep=0.15ex,circle,fill=black]{};
\draw (-0.5,-0.25) node(2)[inner sep=0.15ex,circle,fill=black]{};
\draw (0.5,-0.5) node(3)[inner sep=0.15ex,circle,fill=black]{};
\draw (root) -- (0);
\draw (1) -- (root);
\draw (2) -- (root);
\draw (0) -- (3);
\end{tikzpicture} & \begin{tikzpicture}
\draw (0,0.1) node{};
\draw (0,0) node(root)[inner sep=0.3ex,circle,fill=black]{};
\draw (0.5,-0.25) node(0)[inner sep=0.15ex,circle,fill=black]{};
\draw (0,-0.25) node(1)[inner sep=0.15ex,circle,fill=black]{};
\draw (-0.5,-0.25) node(2)[inner sep=0.15ex,circle,fill=black]{};
\draw (0,-0.5) node(3)[inner sep=0.15ex,circle,fill=black]{};
\draw (root) -- (0);
\draw (1) -- (root);
\draw (2) -- (root);
\draw (1) -- (3);
\end{tikzpicture} & \begin{tikzpicture}
\draw (0,0.1) node{};
\draw (0,0) node(root)[inner sep=0.3ex,circle,fill=black]{};
\draw (0.167,-0.5) node(1)[inner sep=0.15ex,circle,fill=black]{};
\draw (0.5,-0.5) node(0)[inner sep=0.15ex,circle,fill=black]{};
\draw (-0.167,-0.5) node(2)[inner sep=0.15ex,circle,fill=black]{};
\draw (-0.5,-0.5) node(3)[inner sep=0.15ex,circle,fill=black]{};
\draw (root) -- (0);
\draw (1) -- (root);
\draw (2) -- (root);
\draw (3) -- (root);
\end{tikzpicture}\\\hline 
\end{tabular}
    \caption{Bijection between $\Pf_n(123,213)$ and $\mathcal{T}^{\geq2}_{n+1}$ defined by $\Phi_n$ and $\Psi_n$ for $n=0,1,2,3$.}\label{fig:123213example}
\end{figure}

\stepcounter{propcounter}
\begin{enumerate}[label = \textbf{\Alph{propcounter}}]
\item\label{condition} Suppose $f\in\Pf_n(123,213)$, and $\overline{f}\in\Pf_k(123,213)$ is obtained by removing the first few clusters $C_1,\cdots,C_m$ of total size $n-k$ from $f$, with $C_m$ being a closed cluster of parameter $\ell'$. Furthermore, let $0\leq\ell\leq\ell'$ and suppose that there does not exist $i\in[m-1]$, such that $C_i$ is an open cluster and its unique empty block either lies inside $\overline{f}$ or inside $C_m$ and to the left of less than $\ell$ blocks in $C_m$. Then, the tree obtained by attaching a path of length $\ell$ to the root of $\Phi_k(\overline{f})$, with the other end of this path as the new root, is a full right subtree of $\Phi_n(f)$.
\end{enumerate}

Now assume $n\geq 4$ and let $f\in\Pf_n(123,213)$. Depending on the first cluster of $f$, we split into two cases. 

\mbox{}

\textbf{Case 1.} The first cluster of $f$ is a closed cluster of length $n-k$ and parameter $\ell$, where $0\leq k\leq n-1$ and $0\leq\ell\leq n-k-1$. Let $f'$ be obtained from $f$ by removing this first cluster, and observe that $f'\in\Pf_{k}(123,213)$. Define $\Phi_n(f)$ as follows. First, attach a path of length $\ell$ to the root of $\Phi_k(f')$ and make the other end of this path the new root. Then, attach a path of length $n-k-\ell$ to this new root as a new left branch. 

\mbox{}

\textbf{Case 2.} The first cluster $C$ of $f$ is an open cluster of length $n-k$, where $0\leq k\leq n-2$. Suppose the unique empty block in this cluster lies inside a cluster $C'$. Note that if $C'$ is an open cluster, then the empty block in $C'$ appears in $f$ after the empty block in $C$, despite $C'$ appearing after $C$. This contradicts how the empty blocks are matched, so $C'$ must be a closed cluster. 

Suppose the elements in the blocks in $C'$ are $b+1,\cdots,a$, for some $0\leq b<a\leq k$, and suppose the empty block in $C$ is located to the left of exactly $\ell$ blocks in $C'$, where $0\leq\ell\leq a-b-1$. Note that if the closed cluster $C'$ itself contains an empty block, then the empty block in $C$ must appear after the empty block in $C'$ by the way empty blocks are matched, so $C'$ has parameter at least $\ell$. Let $f'$ be obtained from $f$ by removing the first cluster, and observe that $f'\in\Pf_{k}(123,213)$. Let $\overline{f}$ be obtained from $f$ by removing all clusters up to and including $C'$, and observe that $\overline{f}\in\Pf_{b}(123,213)$. If there exists some cluster $C''$ in $f'$ but not in $\overline{f}$ that is an open cluster, such that its empty block is in $\overline{f}$, then the empty block in $C''$ appears in $f$ after the one in $C$, despite $C''$ appearing in $f$ after $C$. This contradicts how the empty blocks are matched, hence such cluster $C''$ does not exist. Similarly, no such open cluster can have its empty block lying in $C'$ and to the left of less than $\ell$ elements in $C'$. Hence, the assumptions in condition \ref{condition} hold. 

We can finally define $\Phi_n(f)$. Let $\overline{T}$ be the rooted tree obtained by attaching a path of length $\ell$ to the root of $\Phi_b(\overline{f})$ and letting the other end of this path, say $v$, be the new root. By condition \ref{condition}, $\overline{T}$ is a full right subtree of $\Phi_k(f')$. Let $T'$ be the subtree of $\Phi_k(f')$ induced by $v$ and all vertices not in $\overline{T}$. From the definition of full right subtree, the root of $\Phi_k(f')$ is in $T'$. Let $\Phi_n(f)$ be the rooted tree defined as follows. Starting with $\Phi_k(f')$, we view $v$ as the new root, and detach $T'$ from $\Phi_k(f')$. Then, we create a new left branch at $v$ that consists of a single edge, and reattach $T'$ to the other end of this edge. Finally, attach a path of length $n-k-1$ to the old root of $\Phi_k(f')$, which is in $T'$, as a new left-most branch.

\mbox{}

We need to check that condition \ref{condition} still holds for $\Phi_n(f)$ in both cases. This is clear in Case 1, as the operation corresponding to the additional closed cluster in front does not modify $\Phi_k(f')$. In Case 2, since the first cluster of $f$ has its empty block in $C'$, the assumptions in condition \ref{condition} can only hold if all clusters (and perhaps some more) up to and including $C'$ are removed. It follows that condition \ref{condition} holds for $\Phi_n(f)$ because it holds for $\Phi_k(f')$ and the operations in Case 2 leaves $\overline{T}$ unchanged.

See Figure \ref{fig:123213big} for an example where we compute $\Phi_{20}(f)$ for a parking function $f\in\Pf_{20}(123,213)$.

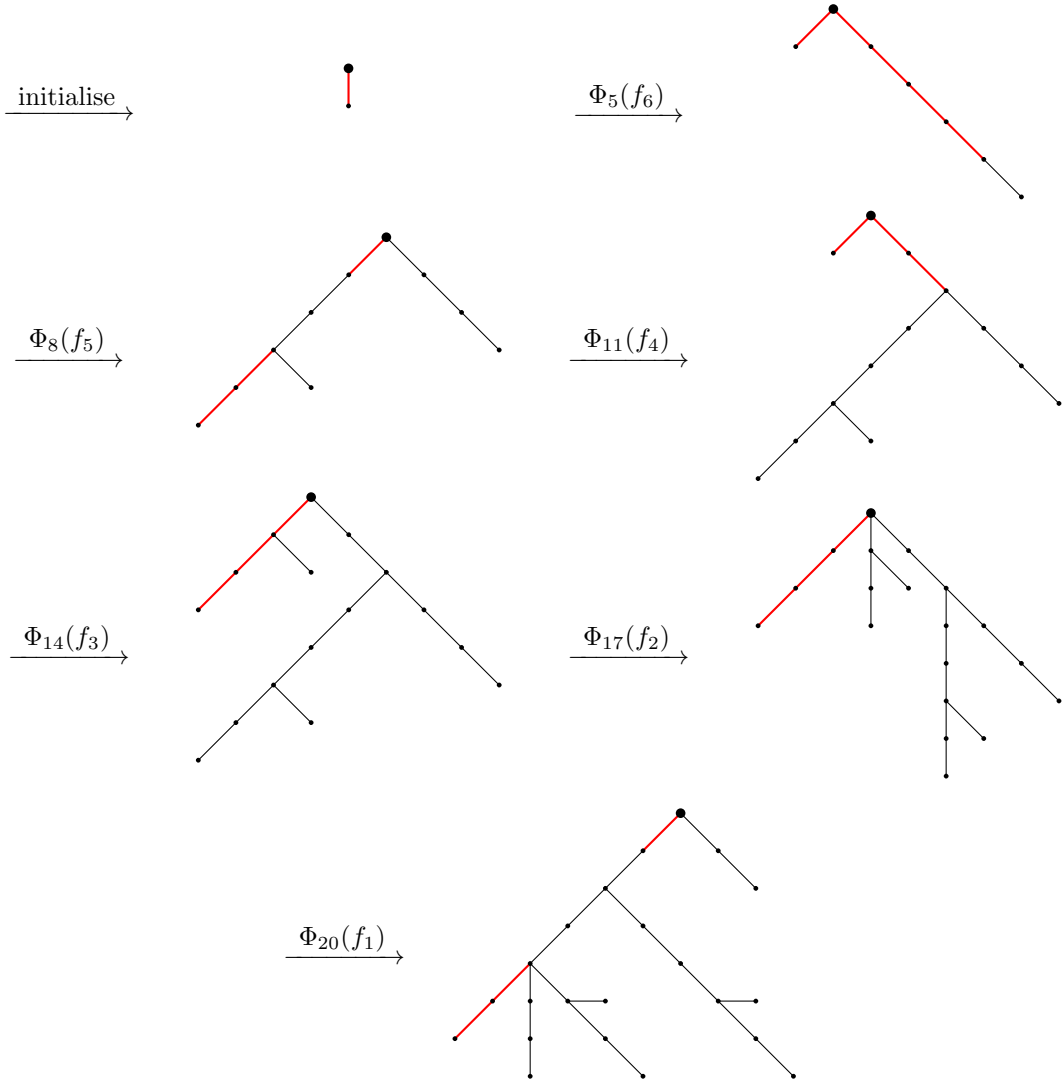
\begin{figure}
    \centering
    \renewcommand{\arraystretch}{1.2}
\newcolumntype{C}{ >{\centering\arraybackslash} m{1.8cm} }
\newcolumntype{D}{ >{\centering\arraybackslash} m{4.8cm} }

\begin{tabular}{c||c|c|c|c|c|c|c|c|c|c}
  block & $\{18,20\}$ & $\{19\}$ & $\{15,17\}$ & $\{16\}$ & $\emptyset$ & $\{12,14\}$ & $\{13\}$ & $\{9\}$ & $\emptyset$ & $\{11\}$\\\hline
  cluster number & 1 & 1 & 2 & 2 & 2 & 3 & 3 & 4 & 3 & 4
\end{tabular}

\mbox{}

\mbox{}

\begin{tabular}{c||c|c|c|c|c|c|c|c|c|c}
  block & $\{10\}$ & $\{6,8\}$ & $\{7\}$ & $\{1\}$ & $\{5\}$ & $\{4\}$ & $\emptyset$ & $\{3\}$ & $\emptyset$ & $\{2\}$\\\hline
  cluster number & 4 & 5 & 5 & 6 & 6 & 6 & 5 & 6 & 1 & 6
\end{tabular}

\mbox{}

\mbox{}

\begin{tabular}{c||c|c|c|c|c|c}
  cluster number & 1 & 2 & 3 & 4 & 5 & 6\\\hline
  cluster type & open & closed & open & closed & open & closed\\\hline
  parameter & / & 0 & / & 2 & / & 4
\end{tabular}

\mbox{}

\mbox{}

\begin{tabular}{CDCD}
$\xrightarrow{\mbox{    }\mbox{\normalsize initialise}\mbox{    }}$ & \begin{tikzpicture}
\draw[red,thick] (0,0) -- (0,-0.5);

\draw (0,0) node(root)[inner sep=0.3ex,circle,fill=black]{};
\draw (0,-0.5) node(0)[inner sep=0.15ex,circle,fill=black]{};
\end{tikzpicture}

& $\xrightarrow{\mbox{    }\mbox{\normalsize $\Phi_5(f_6)$}\mbox{    }}$ &

\begin{tikzpicture}
\draw[red,thick] (-2,2) -- (0,0);
\draw[red,thick] (-2,2) -- (-2.5,1.5);
\draw (0,0) -- (0.5,-0.5);

\draw (0,0) node(root)[inner sep=0.15ex,circle,fill=black]{};
\draw (0.5,-0.5) node(0)[inner sep=0.15ex,circle,fill=black]{};
\draw (-0.5,0.5) node(1)[inner sep=0.15ex,circle,fill=black]{};
\draw (-1,1) node(2)[inner sep=0.15ex,circle,fill=black]{};
\draw (-1.5,1.5) node(3)[inner sep=0.15ex,circle,fill=black]{};
\draw (-2,2) node(4)[inner sep=0.3ex,circle,fill=black]{};
\draw (-2.5,1.5) node(5)[inner sep=0.15ex,circle,fill=black]{};
\end{tikzpicture}\\

$\xrightarrow{\mbox{    }\mbox{\normalsize $\Phi_8(f_5)$}\mbox{    }}$ &

\begin{tikzpicture}
\draw (-1,1) -- (0.5,-0.5);
\draw[red,thick] (-1,1) -- (-1.5,0.5);
\draw[red,thick] (-2.5,-0.5) -- (-3.5,-1.5);
\draw (-1.5,0.5) -- (-2.5,-0.5) -- (-2,-1);

\draw (0,0) node(root)[inner sep=0.15ex,circle,fill=black]{};
\draw (0.5,-0.5) node(0)[inner sep=0.15ex,circle,fill=black]{};
\draw (-0.5,0.5) node(1)[inner sep=0.15ex,circle,fill=black]{};
\draw (-1,1) node(2)[inner sep=0.3ex,circle,fill=black]{};
\draw (-1.5,0.5) node(6)[inner sep=0.15ex,circle,fill=black]{};
\draw (-2,0) node(3)[inner sep=0.15ex,circle,fill=black]{};
\draw (-2.5,-0.5) node(4)[inner sep=0.15ex,circle,fill=black]{};
\draw (-2,-1) node(5)[inner sep=0.15ex,circle,fill=black]{};
\draw (-3,-1) node(7)[inner sep=0.15ex,circle,fill=black]{};
\draw (-3.5,-1.5) node(8)[inner sep=0.15ex,circle,fill=black]{};
\end{tikzpicture}

& $\xrightarrow{\mbox{    }\mbox{\normalsize $\Phi_{11}(f_4)$}\mbox{    }}$ &

\begin{tikzpicture}
\draw (-1,1) -- (0.5,-0.5);
\draw (-1,1) -- (-3.5,-1.5);
\draw (-2.5,-0.5) -- (-2,-1);
\draw[red,thick] (-1,1) -- (-2,2) -- (-2.5,1.5);

\draw (0,0) node(root)[inner sep=0.15ex,circle,fill=black]{};
\draw (0.5,-0.5) node(0)[inner sep=0.15ex,circle,fill=black]{};
\draw (-0.5,0.5) node(1)[inner sep=0.15ex,circle,fill=black]{};
\draw (-1,1) node(2)[inner sep=0.15ex,circle,fill=black]{};
\draw (-1.5,0.5) node(6)[inner sep=0.15ex,circle,fill=black]{};
\draw (-2,0) node(3)[inner sep=0.15ex,circle,fill=black]{};
\draw (-2.5,-0.5) node(4)[inner sep=0.15ex,circle,fill=black]{};
\draw (-2,-1) node(5)[inner sep=0.15ex,circle,fill=black]{};
\draw (-3,-1) node(7)[inner sep=0.15ex,circle,fill=black]{};
\draw (-3.5,-1.5) node(8)[inner sep=0.15ex,circle,fill=black]{};
\draw (-1.5,1.5) node(9)[inner sep=0.15ex,circle,fill=black]{};
\draw (-2,2) node(10)[inner sep=0.3ex,circle,fill=black]{};
\draw (-2.5,1.5) node(11)[inner sep=0.15ex,circle,fill=black]{};
\end{tikzpicture}\\

$\xrightarrow{\mbox{    }\mbox{\normalsize $\Phi_{14}(f_3)$}\mbox{    }}$ &

\begin{tikzpicture}
\draw (-1,1) -- (0.5,-0.5);
\draw (-1,1) -- (-3.5,-1.5);
\draw (-2.5,-0.5) -- (-2,-1);
\draw (-1,1) -- (-2,2);
\draw[red,thick] (-2,2) -- (-3.5,0.5);
\draw (-2.5,1.5) -- (-2,1);

\draw (0,0) node(root)[inner sep=0.15ex,circle,fill=black]{};
\draw (0.5,-0.5) node(0)[inner sep=0.15ex,circle,fill=black]{};
\draw (-0.5,0.5) node(1)[inner sep=0.15ex,circle,fill=black]{};
\draw (-1,1) node(2)[inner sep=0.15ex,circle,fill=black]{};
\draw (-1.5,0.5) node(6)[inner sep=0.15ex,circle,fill=black]{};
\draw (-2,0) node(3)[inner sep=0.15ex,circle,fill=black]{};
\draw (-2.5,-0.5) node(4)[inner sep=0.15ex,circle,fill=black]{};
\draw (-2,-1) node(5)[inner sep=0.15ex,circle,fill=black]{};
\draw (-3,-1) node(7)[inner sep=0.15ex,circle,fill=black]{};
\draw (-3.5,-1.5) node(8)[inner sep=0.15ex,circle,fill=black]{};
\draw (-1.5,1.5) node(9)[inner sep=0.15ex,circle,fill=black]{};
\draw (-2,2) node(10)[inner sep=0.3ex,circle,fill=black]{};
\draw (-2.5,1.5) node(12)[inner sep=0.15ex,circle,fill=black]{};
\draw (-2,1) node(11)[inner sep=0.15ex,circle,fill=black]{};
\draw (-3,1) node(13)[inner sep=0.15ex,circle,fill=black]{};
\draw (-3.5,0.5) node(14)[inner sep=0.15ex,circle,fill=black]{};
\end{tikzpicture}

& $\xrightarrow{\mbox{    }\mbox{\normalsize $\Phi_{17}(f_2)$}\mbox{    }}$ &

\begin{tikzpicture}
\draw (-1,1) -- (0.5,-0.5);
\draw (-1,1) -- (-1,-1.5);
\draw (-1,-0.5) -- (-0.5,-1);
\draw (-1,1) -- (-2,2) -- (-2,0.5);
\draw (-2,1.5) -- (-1.5,1);
\draw[red,thick] (-2,2) -- (-3.5,0.5);

\draw (0,0) node(root)[inner sep=0.15ex,circle,fill=black]{};
\draw (0.5,-0.5) node(0)[inner sep=0.15ex,circle,fill=black]{};
\draw (-0.5,0.5) node(1)[inner sep=0.15ex,circle,fill=black]{};
\draw (-1,1) node(2)[inner sep=0.15ex,circle,fill=black]{};
\draw (-1,0.5) node(6)[inner sep=0.15ex,circle,fill=black]{};
\draw (-1,0) node(3)[inner sep=0.15ex,circle,fill=black]{};
\draw (-1,-0.5) node(4)[inner sep=0.15ex,circle,fill=black]{};
\draw (-0.5,-1) node(5)[inner sep=0.15ex,circle,fill=black]{};
\draw (-1,-1) node(7)[inner sep=0.15ex,circle,fill=black]{};
\draw (-1,-1.5) node(8)[inner sep=0.15ex,circle,fill=black]{};
\draw (-1.5,1.5) node(9)[inner sep=0.15ex,circle,fill=black]{};
\draw (-2,2) node(10)[inner sep=0.3ex,circle,fill=black]{};
\draw (-2,1.5) node(12)[inner sep=0.15ex,circle,fill=black]{};
\draw (-1.5,1) node(11)[inner sep=0.15ex,circle,fill=black]{};
\draw (-2,1) node(13)[inner sep=0.15ex,circle,fill=black]{};
\draw (-2,0.5) node(14)[inner sep=0.15ex,circle,fill=black]{};
\draw (-2.5,1.5) node(15)[inner sep=0.15ex,circle,fill=black]{};
\draw (-3,1) node(16)[inner sep=0.15ex,circle,fill=black]{};
\draw (-3.5,0.5) node(17)[inner sep=0.15ex,circle,fill=black]{};
\end{tikzpicture}\\
\end{tabular}

\begin{tabular}{CD}
   $\xrightarrow{\mbox{    }\mbox{\normalsize $\Phi_{20}(f_1)$}\mbox{    }}$  &  
\begin{tikzpicture}
\draw (-0.5,0.5) -- (0.5,-0.5);
\draw (-1,0) -- (-2.5,-1.5) -- (-2.5,-3);
\draw (0,-2) -- (-1.5,-0.5);
\draw (0.5,-2) -- (0,-2) -- (1,-3);
\draw (-2.5,-1.5) -- (-1,-3);
\draw (-2,-2) -- (-1.5,-2);
\draw[red,thick] (-2.5,-1.5) -- (-3.5,-2.5);
\draw[red,thick] (-0.5,0.5) -- (-1,0);

\draw (0,0) node(root)[inner sep=0.15ex,circle,fill=black]{};
\draw (0.5,-0.5) node(0)[inner sep=0.15ex,circle,fill=black]{};
\draw (-0.5,0.5) node(1)[inner sep=0.3ex,circle,fill=black]{};
\draw (-1,0) node(18)[inner sep=0.15ex,circle,fill=black]{};
\draw (-1.5,-0.5) node(2)[inner sep=0.15ex,circle,fill=black]{};
\draw (-1,-1) node(6)[inner sep=0.15ex,circle,fill=black]{};
\draw (-0.5,-1.5) node(3)[inner sep=0.15ex,circle,fill=black]{};
\draw (0,-2) node(4)[inner sep=0.15ex,circle,fill=black]{};
\draw (0.5,-2) node(5)[inner sep=0.15ex,circle,fill=black]{};
\draw (0.5,-2.5) node(7)[inner sep=0.15ex,circle,fill=black]{};
\draw (1,-3) node(8)[inner sep=0.15ex,circle,fill=black]{};
\draw (-2,-1) node(9)[inner sep=0.15ex,circle,fill=black]{};
\draw (-2.5,-1.5) node(10)[inner sep=0.15ex,circle,fill=black]{};
\draw (-2,-2) node(12)[inner sep=0.15ex,circle,fill=black]{};
\draw (-1.5,-2) node(11)[inner sep=0.15ex,circle,fill=black]{};
\draw (-1.5,-2.5) node(13)[inner sep=0.15ex,circle,fill=black]{};
\draw (-1,-3) node(14)[inner sep=0.15ex,circle,fill=black]{};
\draw (-2.5,-2) node(15)[inner sep=0.15ex,circle,fill=black]{};
\draw (-2.5,-2.5) node(16)[inner sep=0.15ex,circle,fill=black]{};
\draw (-2.5,-3) node(17)[inner sep=0.15ex,circle,fill=black]{};
\draw (-3,-2) node(19)[inner sep=0.15ex,circle,fill=black]{};
\draw (-3.5,-2.5) node(20)[inner sep=0.15ex,circle,fill=black]{};
\end{tikzpicture}\\
\end{tabular}
    \caption{An example computing $\Phi_{20}(f)$, where $f=(\{18,20\},\{19\},\{15,17\},\{16\},\emptyset,\{12,14\},\{13\},\\\{9\},\emptyset,\{11\},\{10\},\{6,8\},\{7\},\{1\},\{5\},\{4\},\emptyset,\{3\},\emptyset,\{2\})$. The cluster that each block belongs to and the cluster types are listed in the tables above. For each $i\in[6]$, $f_i$ is the parking function obtained by only retaining the $i$-th to 6-th clusters of $f$. In each step, the edges added are highlighted in red.}\label{fig:123213big}
\end{figure}

\mbox{}

\noindent$\bullet\mbox{ }\Psi_n:\mathcal{T}^{\geq2}_{n+1}\to\Pf_n(123,213)$.

The definition of $\Psi_0,\Psi_1,\Psi_2$ and $\Psi_3$ are given in Figure \ref{fig:123213example}. We define $\Psi_n$ inductively for $n\geq4$, and include the following condition, analogous to the one in the definition of $\Phi_n$, as part of the induction hypothesis, which needs to be maintained throughout.

\stepcounter{propcounter}
\begin{enumerate}[label = \textbf{\Alph{propcounter}}]
\item\label{condition2} Suppose $T\in\mathcal{T}_{n+1}^{\geq2}$, $\ell\geq 0$, and $\overline{T}\in\mathcal{T}_{k+1}^{\geq2}$ with root $\overline{v}$ satisfies that the tree obtained by attaching a path of length $\ell$ to $\overline{v}$ and declaring the other end of this path the new root is a full right subtree of $T$. Then $\overline{f}=\Psi_k(\overline{T})$ is obtained by deleting a few leading clusters of total size $n-k$ from $f=\Psi_n(T)$. Moreover, if $f\not=\overline{f}$, the cluster $C$ in $f$ immediately preceeding $\overline{f}$ is a closed cluster of parameter at least $\ell$, and no open cluster in $f$ that is not in $\overline{f}$ has its empty block either in $\overline{f}$, or in $C$ and to the left of less than $\ell$ blocks in $C$.
\end{enumerate}

Now assume $n\geq 4$ and let $T\in\mathcal{T}^{\geq2}_{n+1}$. Identify the left-most leaf of $T$ (the one obtained by starting at the root of $T$ and going left at every branching vertex until we reach a leaf) and look at the unique path connecting it to the root of $T$. Depending on whether this path contains any branching vertex, we split into two cases. 

\mbox{}

\textbf{Case 1.} The path connecting the left-most leaf of $T$ to the root of $T$ contains no branching vertex. In other words, the left-most branch of $T$ is just a path, say of length $n-k$. From assumption, the root of $T$ has degree at least 2. 

If the root has degree 2, and the right branch is also just a path, then this path has length $k+1$ and we define $\Psi_n(T)$ to be the parking function on $[n]$ consisting of a single closed cluster of size $n$ and parameter $k$. 

If the root has degree 2, and the right branch contains a branching vertex, let $1\leq\ell\leq k-1$ be the number of edges we need to traverse down the right branch until we encounter a branching vertex, and let $T'$ be the subtree of $T$ obtained by deleting the left branch of $T$ and the initial path of length $\ell$ on the right branch. Note that $T'\in \mathcal{T}^{\geq2}_{k-\ell+1}$. If the root has degree at least 3, set $\ell=0$, and let $T'$ be the subtree obtained by removing the left-most branch from $T$, and note that $T'\in \mathcal{T}^{\geq2}_{k-\ell+1}$ as well. In both of these cases, define $\Psi_n(T)$ to be the parking function on $[n]$ obtained by attaching a closed cluster of length $n-k+\ell$ and parameter $\ell$ to the front of $\Psi_{k-\ell}(T')$.

\mbox{}

\textbf{Case 2.} The path connecting the left-most leaf of $T$ to the root of $T$ contains a branching vertex. Suppose we traversed through $n-k-1$ edges on this path starting at the left-most leaf before first encountering a branching vertex $v$, where $1\leq k\leq n-2$. Let $T'$ be the tree obtained from $T$ by removing this path of length $n-k-1$ starting from the left-most leaf of $T$, removing the first edge in the left-most branch of $T$ and reattaching the remaining branch to the root of $T$, and finally declaring the $v$ to be the new root. Since $v$ is a branching vertex in $T$, the degree of $v$ in $T'$ is at least 2 and thus $T'\in\mathcal{T}^{\geq2}_{k+1}$. From assumption, the root of $T$ has degree at least 2. 

Suppose the root of $T$ has degree 2, and the right branch is just a path of length $\ell+1\geq1$. Note that this right branch must be created by the last cluster in $\Psi_k(T')$, which must be a closed cluster of parameter at least $\ell$. In this case, define $\Psi_n(T)$ to be the parking function on $[n]$ obtained by adding the main portion of an open cluster of length $n-k$ to the front of $\Psi_k(T')$, and placing its unique empty block to the left of exactly $\ell$ blocks in the last cluster of $\Psi_k(T')$.

If the root of $T$ has degree 2, and the right branch contains a branching vertex, let $\ell\geq1$ be the number of edges we need to traverse down the right branch until we encounter a branching vertex $u$, let $T_1$ be the subtree of $T$ induced by $u$ and all vertices below, and suppose $T_1$ contains $b+1$ edges. Then, $T_1\in\mathcal{T}^{\geq2}_{b+1}$, and the rooted tree obtained by attaching a path of length $\ell$ to $u$ and declaring the other end of this path the new root is a full right subtree of both $T$ and $T'$. If the root of $T$ has degree at least 3, set $\ell=0$, and let $T_1$ be the subtree of $T$ obtained by deleting the left-most branch. Again, note that if $T_1$ contains $b+1$ edges, then $T_1\in\mathcal{T}^{\geq2}_{b+1}$ and is a full right subtree of $T$ and $T'$. In both cases, by condition \ref{condition2}, the clusters in $\Psi_b(T_1)$ appear at the end of $\Psi_k(T')$, and the cluster preceeding $\Psi_b(T_1)$ in $\Psi_k(T')$ is a closed cluster $C$ with parameter at least $\ell$, and no open cluster in $\Psi_k(T')$ that is not in $\Psi_b(T_1)$ has its unique empty block either in $\Psi_b(T_1)$ or in $C$ and to the left of less than $\ell$ blocks in $C$. In both of these cases, define $\Psi_n(T)$ to be the parking function on $[n]$ obtained by adding the main portion of an open cluster of length $n-k$ to the front of $\Psi_k(T')$, and placing the unique empty block to the left of exactly $\ell$ blocks in $C$.

\mbox{}

Note that condition \ref{condition2} still holds for $\Psi_n(T)$ by induction hypothesis and the observation that a full right subtree of $T$ is either $T$ itself or a full right subtree of $T'$.

\mbox{}

\noindent$\bullet$ Proof of bijectivity.

We first show that every $T\in\mathcal{T}^{\geq2}_{n+1}$ satisfies $\Phi_n(\Psi_n(T))=T$. Let $T'$ be as in the definition of $\Psi_n(T)$, and let $f'$ be $\Psi_n(T)$ with the first cluster removed. Note that from the definition of $\Psi_n(T)$, $f'=\Psi_k(T')$ if $T'$ has $k+1$ edges, so from induction hypothesis, $T'=\Phi_k(\Psi_k(T'))=\Phi_k(f')$. From the definition of $\Phi_n$, $\Phi_n(\Psi_n(T))$ is obtained by performing to $\Phi_k(f')=\Phi_k(\Psi_k(T'))=T'$ the operation corresponding to the first cluster of $\Psi_n(T)$, which gives us exactly $T$. 

Now we show that every $f\in\Pf_n(123,213)$ satisfies $\Psi_n(\Phi_n(f))=f$. Let $f'$ be $f$ with first cluster removed and let $k$ be such that $f'\in\Pf_k(123,213)$. Let $T=\Phi_n(f)$ and let $T'$ be as in the definition of $\Psi_n(T)$. By looking through the cases in the definition of $\Phi_n$ and $\Psi_n$, we see that $\Phi_k(f')=T'$, so we have from induction hypothesis that $f'=\Psi_k(\Phi_k(f'))=\Psi_k(T')$. From the definition of $\Psi_n$, $\Psi_n(\Phi_n(f))=\Psi_n(T)$ is obtained by adding to the front of $\Psi_k(T')=f'$ the cluster corresponding to the operation changing $T'=\Phi_k(f')$ to $T=\Phi_n(f)$, which is exactly the first cluster of $f$, finishing the proof.
\end{proof}

\subsection{An explicit formula for $\pf_n(312,321)$}\label{compute312321}
In \cite{AP}, Adeniran and Pudwell obtained a formula for $\pf_n(312,321)$ expressed in terms of a sum over $\mathcal{C}_n$, similar to the ones we obtained during the proofs of Theorem \ref{123} and \ref{213}. Using the same method, we prove a more explicit formula for $\pf_n(312,321)$.

\begin{center}
\begin{tabular}{|c|c|c|}
\hline Patterns $\sigma_1,\sigma_2$ & $\pf_n(\sigma_1,\sigma_2)$, $1\leq n\leq8$ & OEIS \\\hline
 312, 321 & 1, 3, 13, 63, 324, 1736, 9589, 54223 & A362744\\\hline
\end{tabular}
\end{center}
\begin{theorem}\label{312321}
$$\pf_n(312, 321)=\frac{\binom{3n+1}{n}}{n+1}-\sum_{k=0}^{n-1}\frac{\binom{3n-3k+1}{n-k}}{2^{k+1}(n-k+1)}.$$
\end{theorem}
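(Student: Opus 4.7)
The plan is to follow the method employed in the proofs of Theorems \ref{123} and \ref{213} above, rather than the more recursive approach used for Theorems \ref{312} and \ref{321}. In \cite{AP}, Adeniran and Pudwell obtained a formula for $\pf_n(312, 321)$ expressed as a sum over Catalan paths, analogous to the expressions derived in this paper for $\pk_n(\sigma)$ with $\sigma \in \{123,213,312,321\}$. The first step is to recall this Catalan-path expression, writing $\pf_n(312, 321) = \sum_{C \in \mathcal{C}_n} W(C)$ for an explicit weight $W(C)$ that is a product of simple functions of the entries of $w(C)$.

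Next, using the canonical decomposition of Catalan paths introduced before Theorem \ref{123}, we derive a functional equation for the generating function $P(x) = \sum_{n \geq 0} \pf_n(312, 321) \, x^n$. After setting $\overline{P}(x) = xP(x)$, this equation will take the form $\overline{P}(x) = x \phi(\overline{P}(x))$ for an explicit rational power series $\phi$, which is precisely the setup required by Lagrange's Implicit Function Theorem (Lemma \ref{lift}). Applying Lemma \ref{lift} with $\psi = 1$ then yields
\[
\pf_n(312, 321) = [x^n] P(x) = [x^{n+1}] \overline{P}(x) = \frac{1}{n+1} [x^n] \phi(x)^{n+1},
\]
and we expect $\phi(x)$ to split as a sum of two rational pieces whose $(n+1)$-th power, when expanded via the binomial theorem, produces a manageable double sum.

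The main obstacle will be the final simplification, where this double sum must be manipulated via standard binomial identities and a telescoping argument to match the claimed closed-plus-correction form. The leading term $\binom{3n+1}{n}/(2(n+1))$ has the shape of a Fuss-Catalan-type coefficient extracted from $\phi(x)^{n+1}$ where $\phi$ is essentially cubic, while the factor $2^{-(k+2)}$ appearing in the correction strongly suggests that $\phi$ contains a factor of the form $(1 - x/2)^{-1}$ or $(2-x)^{-1}$, whose binomial expansion is exactly what produces these powers of $2$. The correction sum is then expected to arise from isolating the tail contributions of this geometric-type factor and reindexing to remove a telescoping portion.
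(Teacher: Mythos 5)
Your plan follows the template of Theorems \ref{123} and \ref{213}, but it misses the feature that makes this case genuinely harder: by Theorem 28 in \cite{AP}, the weight attached to a Catalan path here is $W'(C)=\prod_{i=1}^{|w(C)|-1}(w(C)_i+1)$, i.e.\ the factor corresponding to the \emph{last} block of up-steps is omitted. This destroys multiplicativity over the canonical decomposition: if $C=U_1\cdots U_kD_1C_1\cdots D_kC_k$ and $C_j$ is the last nonempty piece, then $W'(C)=(k+1)\bigl(\prod_{i=1}^{j-1}W(C_i)\bigr)W'(C_j)$, where $W$ is the \emph{full} product. Consequently the generating function $P(x)=\sum_n \pf_n(312,321)x^n$ does not on its own satisfy a functional equation of the form $\overline{P}(x)=x\phi(\overline{P}(x))$ with a usable rational $\phi$, and the step where you apply Lemma \ref{lift} with $\psi=1$ directly to $\overline{P}$ has nothing to apply to. The shape of the target formula already signals this: a $\psi=1$ Lagrange inversion would produce a single sum $\frac{1}{n+1}[x^n]\phi(x)^{n+1}$, whereas the answer is a Fuss--Catalan leading term minus a separate correction sum with weights $2^{-(k+2)}$; a factor $(2-x)^{-1}$ sitting \emph{inside} $\phi$ would, after raising to the $(n+1)$-th power, yield binomial coefficients $\binom{n+k}{k}2^{-k}$ rather than the plain geometric weights that actually appear.

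The missing idea is to introduce the auxiliary sequence $q_n=\sum_{C\in\mathcal{C}_n}\prod_{i=1}^{|w(C)|}(w(C)_i+1)$ with the full product weight. That sequence \emph{is} multiplicative over the canonical decomposition, its generating function satisfies $\overline{Q}(x)=x(1-\overline{Q}(x))^{-2}$, and Lemma \ref{lift} gives $q_n=\frac{1}{n+1}\binom{3n+1}{n}$. The mixed recursion above then yields a linear (not Lagrange-type) relation between $P$ and $Q$, which solves to $P(x)=\frac{(Q(x)-1)(1-x)}{2-x}$; the correction sum in the statement comes from expanding $\frac{1-x}{2-x}$ as a geometric series in $x/2$ and convolving with the known coefficients of $Q(x)-1$ \emph{after} the Lagrange step, not from inside it. Your instinct that $(2-x)^{-1}$ is responsible for the powers of $2$ is right, but it enters at the level of ordinary coefficient extraction from $P=\frac{(Q-1)(1-x)}{2-x}$, not as a factor of $\phi$. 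Without the auxiliary $q_n$ and the two-sequence bookkeeping of which sub-path carries the omitted last factor, the proposed derivation cannot be completed.
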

\begin{proof}
For $n\geq 1$, let
$$q_n=\sum_{C\in\mathcal{C}_n}\prod_{i=1}^{|\mathbf{u}(C)|}(\mathbf{u}(C)_i+1),\quad\quad
p_n=\sum_{C\in\mathcal{C}_n}\prod_{i=1}^{|\mathbf{u}(C)|-1}(\mathbf{u}(C)_i+1),$$
and let $Q(x)=1+\sum_{n\geq1}q_nx^n$ and $P(x)=\sum_{n\geq1}p_nx^n$ be their generating functions. By Theorem 28 in \cite{AP}, $\pf_n(312, 321)=p_n$ for all $n\geq1$. 

Let $W(C)=\prod_{i=1}^{|\mathbf{u}(C)|}(\mathbf{u}(C)_i+1)$. Similar to the proofs of Theorem \ref{123} and \ref{213} above, and using the canonical decomposition, we have for $n\geq 1$,
\begin{align*}
q_n&=\sum_{k=1}^n(k+1)\sum_{\substack{C_1\in\mathcal{C}_{a_1},\cdots,C_k\in\mathcal{C}_{a_k}\\a_1+\cdots+a_k=n-k\\a_1,\cdots,a_k\geq0}}\prod_{j=1}^kW(C_j)=\sum_{k=1}^n(k+1)\sum_{\substack{a_1+\cdots+a_k=n-k\\a_1,\cdots,a_k\geq0}}\prod_{j=1}^nq_{a_j}.
\end{align*}
And thus, 
\begin{align*}
Q(x)&=1+\sum_{n\geq1}q_nx^n=1+\sum_{n\geq1}\left(\sum_{k=1}^n(k+1)\sum_{\substack{a_1+\cdots+a_k=n-k\\a_1,\cdots,a_k\geq0}}\prod_{j=1}^nq_{a_j}\right)x^n\\
&=1+\sum_{k\geq1}(k+1)x^k\left(\sum_{a_1,\cdots,a_k\geq0}\prod_{j=1}^nq_{a_j}x^{a_j}\right)=1+\sum_{k\geq1}(k+1)x^k(Q(x))^k.
\end{align*}
Let $\overline{Q}(x)=xQ(x)$, then $\overline{Q}(x)=x(1-\overline{Q}(x))^{-2}$, so by Lemma \ref{lift}, we have $q_n=[x^n]Q(x)=[x^{n+1}]\overline{Q}(x)=\frac{1}{n+1}[x^n](1-x)^{-2n-2}=\frac{1}{n+1}\binom{3n+1}{n}$.

Now let $W'(C)=\prod_{i=1}^{|\mathbf{u}(C)|-1}(\mathbf{u}(C)_i+1)$, so that $p_n=\sum_{C\in\mathcal{C}_n}W'(C)$. For all $n>k\geq 1$ and $C\in\mathcal{C}_{n,k}$, let the canonical decomposition of $C$ be $C=U_1\cdots U_kD_1C_1\cdots D_kC_k$. As usual, $\mathbf{u}(C)$ is formed by attaching $\mathbf{u}(C_1),\cdots,\mathbf{u}(C_k)$ together and adding an entry of $k$ to the front. Moreover, there exists a unique $j\in[k]$ such that $C_j$ is the last among $C_1,\cdots,C_k$ to have non-zero length, so the last entry of $\mathbf{u}(C)$ is the last entry of $\mathbf{u}(C_j)$ as $\mathbf{u}(C_{j'})$ is empty for all $j<j'\leq k$. Thus, $W'(C)=(k+1)(\prod_{i=1}^{j-1}W(C_i))\cdot W'(C_j)$. When $n=k$, the unique Catalan path $C$ in $\mathcal{C}_{n,n}$ is the one with $n$ up-steps followed by $n$ down-steps, which satisfies $W'(C)=1$.

It follows that
\begin{align*}
P(x)&=\sum_{n\geq1}p_nx^n=\sum_{n\geq1}\left(\sum_{C\in\mathcal{C}_n}W'(C)\right)x^n\\
&=\sum_{n\geq1}x^n+\sum_{n\geq 1}x^n\sum_{k=1}^{n-1}(k+1)\sum_{j=1}^k\sum_{\substack{C_1\in\mathcal{C}_{a_1},\cdots,C_j\in\mathcal{C}_{a_j}\\a_1+\cdots+a_j=n-k\\a_1,\cdots,a_{j-1}\geq0,a_j\geq1}}\left(\prod_{i=1}^{j-1}W(C_i)\right)\cdot W'(C_j)\\
&=\sum_{n\geq1}x^n+\sum_{k\geq1}(k+1)x^k\sum_{j=1}^k\sum_{\substack{C_1\in\mathcal{C}_{a_1},\cdots,C_j\in\mathcal{C}_{a_j}\\a_1,\cdots,a_{j-1}\geq0,a_j\geq1}}\left(\prod_{i=1}^{j-1}W(C_i)\right)\cdot W'(C_j)x^{a_1+\cdots+a_j}\\
&=\sum_{n\geq1}x^n+\sum_{k\geq1}(k+1)x^k\sum_{j=1}^k\sum_{a_1,\cdots,a_{j-1}\geq0,a_j\geq1}\left(\prod_{i=1}^{j-1}q_{a_i}x^{a_i}\right)\cdot p_{a_j}x^{a_j}\\
&=\sum_{n\geq1}x^n+\sum_{k\geq 1}(k+1)x^k\sum_{j=1}^k(Q(x))^{j-1}P(x)\\
&=\frac{x}{1-x}+\frac{P(x)}{Q(x)-1}\sum_{k\geq1}(k+1)x^k((Q(x))^k-1)\\
&=\frac{x}{1-x}+\frac{P(x)}{Q(x)-1}((1-xQ(x))^{-2}-(1-x)^{-2})\\
&=\frac{x}{1-x}+\frac{P(x)}{Q(x)-1}(Q(x)-(1-x)^{-2}),
\end{align*}
where in the last step we used $Q(x)=(1-xQ(x))^{-2}$, which follows from the relation $\overline{Q}(x)=x(1-\overline{Q}(x))^{-2}$ we derived above. Solving this for $P(x)$, we get $P(x)=\frac{(Q(x)-1)(1-x)}{2-x}$. Hence, for $n\geq 2$, we have
\begin{align*}
p_n&=[x^n]P(x)=\frac12[x^n]\frac{(Q(x)-1)(1-x)}{1-\frac12x}\\
&=\frac12[x^n](Q(x)-1)\sum_{k\geq0}\frac{x^k}{2^k}-\frac12[x^{n-1}](Q(x)-1)\sum_{k\geq0}\frac{x^k}{2^k}\\
&=\sum_{k=0}^{n-1}\frac{q_{n-k}}{2^{k+1}}-\sum_{k=0}^{n-2}\frac{q_{n-1-k}}{2^{k+1}}=\sum_{k=0}^{n-1}\frac{\binom{3n-3k+1}{n-k}}{2^{k+1}(n-k+1)}-\sum_{k=0}^{n-2}\frac{\binom{3n-3k-2}{n-k-1}}{2^{k+1}(n-k)}\\
&=\frac{\binom{3n+1}{n}}{2(n+1)}+\sum_{k=0}^{n-2}\left(\frac{\binom{3n-3k-2}{n-k-1}}{2^{k+2}(n-k)}-\frac{\binom{3n-3k-2}{n-k-1}}{2^{k+1}(n-k)}\right)\\
&=\frac{\binom{3n+1}{n}}{2(n+1)}-\sum_{k=0}^{n-2}\frac{\binom{3n-3k-2}{n-k-1}}{2^{k+2}(n-k)}=\frac{\binom{3n+1}{n}}{n+1}-\sum_{k=0}^{n-1}\frac{\binom{3n-3k+1}{n-k}}{2^{k+1}(n-k+1)},
\end{align*}
as required. The case when $n=1$ is easy to verify. 
\end{proof}

\section{Congruence classes of generalised parking functions}\label{related}
In this section, we apply similar algebraic techniques as in the previous sections to the work of Novelli and Thibon in \cite{NT} on certain Hopf algebras of generalised parking functions, and obtain results on the graded dimensions of these Hopf algebras, which is also the number of congruence classes of generalised parking functions under several different congruence relations. We begin with some definitions.

For a function $f:[n]\to[N]$, the \textit{evaluation} of $f$ is the sequence $\ev(f)$ of length $N$, whose $j$-th entry is the number of $i\in[n]$ with $f(i)=j$. The \textit{packed evaluation} of $f$ is the sequence $\pev(f)$ obtained by removing all the zero entries from $\ev(f)$. 

In \cite{NT}, Novelli and Thibon studied Hopf algebras of two types of generalised parking functions. For $m\geq1$, an $m$\textit{-multiparking function} of \textit{size} $mn$ is a function $f:[mn]\to[n]$ such that there exists an ordinary parking function $\overline{f}$ satisfying $\ev(f)=m\ev(\overline{f})$, and an $m$\textit{-parking function} of \textit{size} $n$ is a function $f:[n]\to[1+m(n-1)]$ satisfying $f(i)\leq1+m(i-1)$ for all $i\in[n]$. 

One way to obtain a new Hopf algebra is to take the quotient under certain congruences of generalised parking functions. Therefore, the number of congruence classes of generalised parking functions is of interest. Since our work is mostly computational, we will apply relevant results in \cite{NT} directly as black boxes, and refer interested readers to the paper of Novelli and Thibon \cite{NT} for detailed definitions of these different congruences. 

We begin with two results in the setting of $m$-multiparking functions.

\begin{center}
\begin{tabular}{|c|c|c|}
\hline  \multirow{2}{*}{$m$}   & hyposylvester classes of & \multirow{2}{*}{OEIS}\\
   & $m$-multiparking functions of size $mn$
, $1\leq n\leq8$ & \\\hline
%\hline  $m$  & hyposylvester classes of $m$-multiparking functions of size $mn$, $1\leq n\leq8$ & OEIS \\\hline
  1   & 1, 3, 12, 55, 273, 1428, 7752, 43263 & A001764\\\hline
  2   & 1, 4, 21, 126, 818, 5594, 39693, 289510 & A003168\\\hline
  3   & 1, 5, 32, 233, 1833, 15180, 130392, 1151057 & A364922=A243693\\\hline
  4   & 1, 6, 45, 382, 3498, 33696, 336549, 3453750 & A243694\\\hline
  5   & 1, 7, 60, 579, 6017, 65732, 744264, 8656795 & A243695\\\hline
\end{tabular}
\end{center}

\begin{theorem}\label{hyposylvestermultipark}
For every $m\geq1$, the number $p_n^{(m)}$ of hyposylvester classes of $m$-multiparking functions of size $mn$ is
$$\frac1n\sum_{k=0}^{n-1}\binom{n}{k}\binom{3n-k}{2n+1}(m-1)^k.$$
\end{theorem}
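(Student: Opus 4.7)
The plan is to translate the enumeration of hyposylvester classes into a sum over combinatorial tree-like objects using the description of Novelli and Thibon in \cite{NT}, derive an algebraic functional equation for the ordinary generating function $P(x)=\sum_{n\geq 1}p_n^{(m)}x^n$, and then extract coefficients via Lagrange's Implicit Function Theorem (Lemma \ref{lift}). This mirrors the strategy used in the proofs of Theorems \ref{123}, \ref{213} and \ref{312}.

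First, I would unpack the Novelli--Thibon characterisation of hyposylvester classes of $m$-multiparking functions and show that each such class can be encoded by a Catalan path $C\in\mathcal{C}_n$ weighted by an expression in the block-length sequence $w(C)$ and the parameter $m$, similar to the weights appearing in Section \ref{computepk}. As a sanity check, the specialisation $m=1$ should reduce to the Fuss--Catalan number $\frac{1}{2n+1}\binom{3n}{n}=\frac{1}{n}\binom{3n}{n-1}$, which enumerates ternary trees and matches the $k=0$ term of the stated formula.

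Using the canonical decomposition $C=U_1\cdots U_k D_1 C_1\cdots D_k C_k$ for $C\in\mathcal{C}_{n,k}$ (as in the proofs of Theorems \ref{123} and \ref{213}), the weighted sum yields a recursion that I conjecture lifts to the functional equation
$$P(x)=x\bigl(1+P(x)\bigr)^{2}\bigl(1+mP(x)\bigr).$$
The shape of this equation should be natural from the hyposylvester description: at the root of each tree-like object there is a ternary branching, with one of the three slots carrying an extra weight of $m$ coming from the $m$-fold multiplicity inherent in $m$-multiparking functions. Applying Lemma \ref{lift} with $\phi(x)=(1+x)^{2}(1+mx)$ then gives
$$p_n^{(m)}=\frac{1}{n}[x^{n-1}]\bigl((1+x)^{2}(1+mx)\bigr)^{n}.$$

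To recover the stated formula, I would rewrite $(1+x)^{2}(1+mx)=(1+x)^{3}+(m-1)x(1+x)^{2}$ and apply the binomial theorem:
$$\phi(x)^{n}=\sum_{k=0}^{n}\binom{n}{k}(m-1)^{k}x^{k}(1+x)^{3n-k}.$$
Extracting the coefficient of $x^{n-1}$ and using $\binom{3n-k}{n-1-k}=\binom{3n-k}{2n+1}$ immediately produces
$$p_n^{(m)}=\frac{1}{n}\sum_{k=0}^{n-1}\binom{n}{k}\binom{3n-k}{2n+1}(m-1)^{k}.$$
The main obstacle I expect is the first step: carefully translating the abstract algebraic description of hyposylvester classes in \cite{NT} into a concrete weighted sum over Catalan paths (or equivalently, justifying the root decomposition that produces the factor $(1+P)^{2}(1+mP)$). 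Once that functional equation is established, the Lagrange inversion and binomial manipulations are routine and should go through as sketched above.
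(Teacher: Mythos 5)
Your overall strategy matches the paper's exactly: reduce to a weighted sum over Catalan paths via the Novelli--Thibon description, derive a functional equation by the canonical decomposition, and finish with Lagrange inversion. Your conjectured equation $P(x)=x(1+P(x))^2(1+mP(x))$ is in fact correct: the paper proves that $\overline{Q}_m(x)=xQ_m(x)$ satisfies $\overline{Q}_m=x\bigl(1+(m-1)\overline{Q}_m\bigr)(1-\overline{Q}_m)^{-2}$ and that $P_m=\overline{Q}_m/(1-\overline{Q}_m)$, and eliminating $\overline{Q}_m$ from this pair recovers precisely your single equation. Your coefficient extraction via $\phi(x)=(1+x)^2(1+mx)=(1+x)^3+(m-1)x(1+x)^2$ is also correct and is arguably cleaner than the paper's, which applies Lemma \ref{lift} with the rational $\phi(x)=(1+(m-1)x)(1-x)^{-2}$ and $\psi(x)=x/(1-x)$ to get $\frac1n[x^{n-1}](1+(m-1)x)^n(1-x)^{-2n-2}$; both expansions give the same answer.

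The genuine gap is that the two load-bearing steps are left as conjectures. First, the weight per Catalan path must be pinned down: by Lemma 3.2 and Theorem 3.4 of \cite{NT} the number of hyposylvester classes with packed evaluation $\beta$ is $\prod_{i=2}^{|\beta|}(1+\beta_i)$, and since $\pev(f)=m\,w(C)$ for the Catalan path $C$ attached to the underlying increasing parking function, one gets $p_n^{(m)}=\sum_{C\in\mathcal{C}_n}\prod_{i=2}^{|w(C)|}(1+mw(C)_i)$. Second, and more substantively, this weight treats the first run of up-steps differently from all the others (it carries weight $1$, not $1+mw(C)_1$), so the canonical decomposition does \emph{not} directly yield a closed equation for $P$ alone; your appeal to ``a ternary branching with one slot carrying an extra weight of $m$'' is a heuristic, not a derivation. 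The paper resolves this by introducing the auxiliary fully-weighted series $q_n^{(m)}=\sum_{C\in\mathcal{C}_n}\prod_{i=1}^{|w(C)|}(1+mw(C)_i)$, for which the canonical decomposition gives $Q_m=1+\sum_{k\geq1}(1+mk)x^kQ_m^k$, and then observing $P_m=\sum_{k\geq1}x^kQ_m^k$. You would need this (or an explicit bijection with weighted ternary trees) to turn your conjectured functional equation into a proof; once it is in place, everything downstream of it in your sketch is sound.
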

\begin{proof}
Note that every ordinary parking function of size $n$ is the permutation of an increasing ordinary parking function $\overline{f}$ with the same evaluation. Also, recall that every increasing ordinary parking function $\overline{f}$ of size $n$ corresponds bijectively to a Catalan path $C$ of size $2n$, and note that this bijection satisfies $\pev(\overline{f})=\mathbf{u}(C)$. From the definition, for every $m$-multiparking function $f$ of size $mn$, there exists an ordinary parking function $\overline{f}$ of size $n$, which can be chosen to be increasing, that satisfies $\ev(f)=m\ev(\overline{f})$, and thus $\pev(f)=m\pev(\overline{f})=m\mathbf{u}(C)$. 

By Lemma 3.2 and Theorem 3.4 in \cite{NT}, $m$-multiparking functions in the same hyposylvester class have the same evaluation and thus the same packed evaluation. Moreover, for every possible evaluation $\alpha$ of an $m$-multiparking function, the number of hyposylvester classes that contain $m$-multiparking functions with common evaluation $\alpha$ depends only on their common packed evaluation $\beta$, and is equal to $\prod_{i=2}^{|\beta|}(1+\beta_i)$. Putting all of these together, we see that the number of hyposylvester classes of $m$-multiparking functions of size $mn$ is 
$$p_n^{(m)}=\sum_{C\in\mathcal{C}_n}\prod_{i=2}^{|\mathbf{u}(C)|}\left(1+m\mathbf{u}(C)_i\right).$$

We now proceed similarly to the proofs of Theorem \ref{123}, \ref{213} and \ref{312321} above. For $n\geq 1$, let
$q_n^{(m)}=\sum_{C\in\mathcal{C}_n}\prod_{i=1}^{|\mathbf{u}(C)|}(1+m\mathbf{u}(C)_i)$, and consider the generating functions $Q_m(x)=1+\sum_{n\geq1}q_n^{(m)}x^n$ and $P_m(x)=\sum_{n\geq1}p_n^{(m)}x^n$. By using the canonical decompositions of Catalan paths, we have 
\begin{align*}
Q_m(x)&=1+\sum_{k\geq1}(1+mk)x^k(Q_m(x))^k\\
&=1+\frac{xQ_m(x)}{1-xQ_m(x)}+\frac{mxQ_m(x)}{(1-xQ_m(x))^2}=\frac{1+(m-1)xQ_m(x)}{(1-xQ_m(x))^2}.
\end{align*}
Thus, $\overline{Q}_m(x)=xQ_m(x)$ satisfies $\overline{Q}_m(x)=x\cdot\frac{1+(m-1)\overline{Q}_m(x)}{(1-\overline{Q}_m(x))^2}$. Using the canonical decompositions again, we get $P_m(x)=\sum_{k\geq1}x^k(Q_m(x))^k=\frac{\overline{Q}_m(x)}{1-\overline{Q}_m(x)}$. Therefore, by Lemma \ref{lift}, we have 
\begin{align*}
p_n^{(m)}&=[x^n]P_m(x)=\frac1n[x^{n-1}]\left(\frac{x}{1-x}\right)'\left(\frac{1+(m-1)x}{(1-x)^2}\right)^n=\frac1n[x^{n-1}]\frac{(1+(m-1)x)^n}{(1-x)^{2n+2}}\\
&=\frac1n\sum_{k=0}^{n-1}\binom{n}{k}(m-1)^k[x^{n-1-k}]\frac1{(1-x)^{2n+2}}\\
&=\frac1n\sum_{k=0}^{n-1}\binom{n}{k}\binom{3n-k}{2n+1}(m-1)^k,
\end{align*}
as required.
\end{proof}

\begin{center}
\begin{tabular}{|c|c|c|}
\hline  \multirow{2}{*}{$m$}   & metasylvester classes of & \multirow{2}{*}{OEIS}\\
   & $m$-multiparking functions of size $mn$
, $1\leq n\leq8$ & \\\hline
%\hline  $m$  & hyposylvester classes of $m$-multiparking functions of size $mn$, $1\leq n\leq8$ & OEIS \\\hline
  1   & 1, 3, 14, 87, 669, 6098, 64050, 759817 & A132624\\\hline
  2   & 1, 4, 27, 254, 3048, 44328, 755681, 14750646 & A243696\\\hline
  3   & 1, 5, 44, 551, 8919, 176634, 4130208, 111222029 & A243697\\\hline
  4   & 1, 6, 65, 1014, 20598, 514604, 15240261, 521457190 & A243698\\\hline
  5   & 1, 7, 90, 1679, 40977, 1234002, 44162294, 1829650545 & A243699\\\hline
\end{tabular}
\end{center}
\begin{theorem}\label{metasylvestermultipark}
For every $m\geq 1$, the number $p_n^{(m)}$ of metasylvester classes of $m$-multiparking functions of size $mn$ is equal to
$$\sum_{C\in\mathcal{C}_n}\prod_{i=2}^{|\mathbf{u}(C)|}\left(1+m\sum_{j=i}^{|\mathbf{u}(C)|}\mathbf{u}(C)_j\right).$$
Consequently, $p_n^{(m)}=\sum_{k=1}^np_{n,k}$, where
$$p_{n,k}=\begin{cases}
1, &\text{if }k=n,\\
\displaystyle (1+m(n-k))\sum_{i=n-k}^{n-1}\sum_{j=k+1-n+i}^ip_{i,j}, &\text{if }1\leq k\leq n-1.\\
\end{cases}$$
Furthermore, the sequence $p_n^{(m)}$ satisfies 
$$\frac{x}{1-x}=\sum_{n=1}^\infty p_n^{(m)}\frac{x^n(1-x)^n}{\prod_{\ell=1}^n(1+m\ell x)}.$$
\end{theorem}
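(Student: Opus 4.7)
The proof splits into the three parts of the statement.

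\textit{Part 1.} I will follow the approach of Theorem \ref{hyposylvestermultipark}, substituting the metasylvester analog of Novelli and Thibon's characterization for the hyposylvester one. Specifically, I invoke the result from \cite{NT} that metasylvester classes of $m$-multiparking functions are determined by the common evaluation of their members, and that the number of metasylvester classes with packed evaluation $\beta$ is $\prod_{i=2}^{|\beta|}(1+m\sum_{j=i}^{|\beta|}\beta_j)$. Combined with the bijection between increasing ordinary parking functions of size $n$ and Catalan paths $C \in \mathcal{C}_n$ (in which $\pev(\bar{f}) = w(C)$) and the relation $\pev(f) = m\pev(\bar f)$ for $m$-multiparking functions, this immediately produces the stated sum over $\mathcal{C}_n$.

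\textit{Part 2.} I will follow the derivation used for the recursion in Theorem \ref{312} essentially verbatim. Writing $W(C) := \prod_{i=2}^{|w(C)|}\bigl(1+m\sum_{j=i}^{|w(C)|}w(C)_j\bigr)$ and $p_{n,k} := \sum_{C \in \mathcal{C}_{n,k}} W(C)$, the case $k=n$ is immediate since $\mathcal{C}_{n,n}$ is a singleton with weight $1$. For $n > k$, applying the ``delete first peak'' operation to $C \in \mathcal{C}_{n,k}$ yields $C' \in \mathcal{C}_{n-i',j}$ with $w(C')_t = w(C)_{t+1}$ for $t \geq 2$, so the $i=2$ factor of $W(C)$ simplifies to $1+m(n-k)$ while the remaining factors reassemble into $W(C')$. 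Thus $W(C) = (1+m(n-k))\,W(C')$, and summing over the bijective correspondence between $C \in \mathcal{C}_{n,k}$ and pairs $(i', C')$ (with the substitution $i = n - i'$) recovers the claimed double-sum recursion.

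\textit{Part 3 (the main obstacle).} I will establish the functional equation via a bivariate generating-function analysis. Let $P_k(x) := \sum_{n \geq k} p_{n,k}\,x^n$ and $F(x,y) := \sum_{k \geq 1} P_k(x)\,y^k$, so that $G(x) := F(x,1) = \sum_{n \geq 1}p_n^{(m)} x^n$. Multiplying the recursion of Part~2 by $x^n y^k$ and swapping summations carefully to absorb the double sum over $(i,j)$, one derives an algebraic relation of the form $F(x,y)\,D(x,y) = N\bigl(x, y, G(x)\bigr)$, where $D(x,y)$ is an explicit low-degree polynomial in $y$ and $N$ is linear in $G$. Applying the kernel method at the distinguished root $y_0(x) = 1 + O(x)$ of $D(x, y_0) = 0$ then yields an implicit (algebraic or parametric) description of $G$. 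The hardest step is translating this implicit description into the stated series form $\frac{x}{1-x} = \sum_{n \geq 1}p_n^{(m)}\,\frac{x^n(1-x)^n}{\prod_{\ell=1}^n(1+m\ell x)}$: the denominators $\prod_{\ell=1}^n(1+m\ell x)$ must be matched with the telescoping multiplicative factors $1+m\ell$ accumulated through the canonical decomposition underlying the Part~2 recursion, which requires delicate algebraic bookkeeping. An equivalent and perhaps cleaner route is to bypass the kernel method and verify the equation's coefficient consequence $\sum_{k=1}^n p_k^{(m)}\,[x^{n-k}]\bigl((1-x)^k/\prod_{\ell=1}^k(1+m\ell x)\bigr) = 1$ for every $n \geq 1$ directly by induction, showing compatibility with the recursion of Part~2.
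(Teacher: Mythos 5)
Your Parts 1 and 2 match the paper's proof essentially verbatim: the first part invokes Theorem 3.9 of \cite{NT} together with $\pev(f)=m\pev(\overline{f})=mw(C)$ (note the factor $m$ enters only through this substitution, not in the Novelli--Thibon count itself, which is $\prod_{i=2}^{|\beta|}(1+\sum_{j\geq i}\beta_j)$), and the second part is exactly the delete-the-first-peak argument giving $W(C)=(1+m(n-k))W(C')$.

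Part 3 is where the real work lies, and there is a genuine gap. Your kernel-method setup presumes a relation $F(x,y)D(x,y)=N(x,y,G(x))$ with $D$ a polynomial in $y$, but the coefficient $(1+m(n-k))$ in the recursion grows linearly in the diagonal index $n-k$. Whichever variable you use to mark $k$ or $n-k$, this factor translates into a differential operator such as $1+my\partial_y$ acting on the bivariate series, so the functional equation is a linear ODE in $y$ rather than an algebraic kernel equation, and the standard kernel method does not apply as described; moreover the summation range $i\geq n-k$ forces truncations of the diagonal series that reintroduce individual entries $p_{i,j}$ and spoil any clean closed relation. Your fallback --- verifying $\sum_{k=1}^np_k^{(m)}[x^{n-k}]\bigl((1-x)^k/\prod_{\ell=1}^k(1+m\ell x)\bigr)=1$ ``directly by induction'' --- is a restatement of the claim, not an argument: it is not clear what the induction would be on, nor how the two-dimensional recursion for $p_{n,k}$ would feed into it. The paper's route is different and is the missing idea: it works diagonal by diagonal, setting $P_k(x)=\sum_{n\geq k+1}p_{n,n-k}x^{n-k-1}$ so that the factor $1+mk$ is constant on each diagonal, introduces the truncated cumulative sums $S_k(x)=\sum_{j=0}^k\sum_{i\geq k+1}p_{i,i-j}x^{i-1}$, proves the first-order recursion
$$S_{k+1}(x)=\frac{1+m(k+1)x}{1-x}\,S_k(x)-p^{(m)}_{k+1}x^k,$$
and then obtains the stated identity by telescoping $\sum_{k\geq0}\bigl(\tfrac{1+m(k+1)x}{1-x}S_k-S_{k+1}\bigr)\tfrac{x(1-x)^{k+1}}{\prod_{\ell=1}^{k+1}(1+m\ell x)}=xS_0(x)=\tfrac{x}{1-x}$. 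Without this (or an equivalent mechanism) your Part 3 does not go through.
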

\begin{proof}
By Theorem 3.9 in \cite{NT}, for every possible evaluation of an $m$-multiparking function $\alpha$, the number of metasylvester classes containing $m$-multiparking function with common evaluation $\alpha$ depends only on their common packed evaluation $\beta$, and is equal to $\prod_{i=2}^{|\beta|}(1+\sum_{j=i}^{|\beta|}\beta_j)$. Consequently, similar to the proof of Theorem \ref{hyposylvestermultipark} above, we see that the number $p_n^{(m)}$ of metasylvester classes of $m$-multiparking functions of size $mn$ is equal to 
$$\sum_{C\in\mathcal{C}_n}\prod_{i=2}^{|\mathbf{u}(C)|}\left(1+m\sum_{j=i}^{|\mathbf{u}(C)|}\mathbf{u}(C)_j\right).$$

We now proceed similarly to the proof of Theorem \ref{321}. Let $W(C)=\prod_{i=2}^{|\mathbf{u}(C)|}(1+m\sum_{j=i}^{|\mathbf{u}(C)|}\mathbf{u}(C)_j)$, and let $p_{n,k}=\sum_{C\in\mathcal{C}_{n,k}}W(C)$, so that $p_n^{(m)}=\sum_{k=1}^np_{n,k}$. Note that $p_{n,n}=1$ as $\mathcal{C}_{n,n}$ contains only the Catalan path consisting of $n$ up-steps followed by $n$ down-steps. For $n>k\geq1$ and $C\in\mathcal{C}_{n,k}$, we have $|\mathbf{u}(C)|\geq2$. Let $i'\in[k]$ be the length of the first block of down-steps in $C$ and let $C'\in\mathcal{C}_{n-i',j}$ be obtained by deleting the first peak of $C$. Note that $\mathbf{u}(C')_t=\mathbf{u}(C)_{t+1}$ for all $2\leq t\leq|\mathbf{u}(C')|=|\mathbf{u}(C)|-1$. Therefore,
\begin{align*}
W(C)&=\prod_{i=2}^{|\mathbf{u}(C)|}\left(1+m\sum_{j=i}^{|\mathbf{u}(C)|}\mathbf{u}(C)_j\right)=\left(1+m\sum_{j=2}^{|\mathbf{u}(C)|}\mathbf{u}(C)_j\right)\prod_{i=3}^{|\mathbf{u}(C)|}\left(1+m\sum_{j=i}^{|\mathbf{u}(C)|}\mathbf{u}(C)_j\right)\\
&=\left(1+m\sum_{j=2}^{|\mathbf{u}(C)|}\mathbf{u}(C)_j\right)W(C')=(1+m(n-k))W(C').
\end{align*}
Thus, we have the recurrence relation
\begin{align*}
p_{n,k}&=\sum_{C\in\mathcal{C}_{n,k}}W(C)=\sum_{i'=1}^k\sum_{j=k-i'+1}^{n-i'}\sum_{C'\in\mathcal{C}_{n-i',j}}(1+m(n-k))W(C')\\
&=(1+m(n-k))\sum_{i'=1}^k\sum_{j=k-i'+1}^{n-i'}p_{n-i,j}=(1+m(n-k))\sum_{i=n-k}^{n-1}\sum_{j=k+1-n+i}^ip_{i,j}.
\end{align*}
In particular, we have $p_{n,1}=(1+m(n-1))\sum_{j=1}^{n-1}p_{n-1,j}=(1+m(n-1))p_{n-1}^{(m)}$.

For every $k\geq 0$, let $P_k(x)=\sum_{n=k+1}^\infty p_{n,n-k}x^{n-k-1}$ be the generating function of the terms lying on the $k$-th diagonal. Then, $P_k(0)=[x^0]P_k(x)=p_{k+1,1}=(1+mk)p_k^{(m)}$. For $k\geq 0$, using the recurrence above, we have 
\begin{align*}
P_{k+1}(x)&=\sum_{n=k+2}^\infty p_{n,n-k-1}x^{n-k-2}\\
&=\sum_{n=k+2}^\infty(1+m(k+1))\sum_{i=k+1}^{n-1}\sum_{j=0}^kp_{i,i-j}x^{n-k-2}\\
&=(1+m(k+1))x^{-k}\sum_{j=0}^k\sum_{i=k+1}^\infty p_{i,i-j}x^{i-1}\sum_{n=i+1}^\infty x^{n-i-1}\\
&=\frac{1+m(k+1)}{x^k(1-x)}\sum_{j=0}^k\sum_{i=k+1}^\infty p_{i,i-j}x^{i-1}.
\end{align*}
Let $S_k(x)=\sum_{j=0}^k\sum_{i=k+1}^\infty p_{i,i-j}x^{i-1}$, then we have $x^kP_{k+1}(x)=\frac{1+m(k+1)}{1-x}S_k(x)$. Furthermore, we have
\begin{align*}
S_{k+1}(x)&=\sum_{j=0}^{k+1}\sum_{i=k+2}^\infty p_{i,i-j}x^{i-1}\\
&=S_k(x)-\sum_{j=0}^kp_{k+1,k+1-j}x^k+\sum_{i=k+2}^\infty p_{i,i-k-1}x^{i-1}\\
&=S_k(x)-\sum_{j=1}^{k+1}p_{k+1,j}x^k+x^{k+1}P_{k+1}(x)\\
&=S_k(x)-p^{(m)}_{k+1}x^k+\frac{(1+m(k+1))x}{1-x}S_k(x)\\
&=\left(\frac{1+m(k+1)x}{1-x}\right)S_k(x)-p^{(m)}_{k+1}x^k,
\end{align*}
and therefore by telescoping,
\begin{align*}
&\sum_{k=1}^\infty p_k^{(m)}\frac{x^k(1-x)^k}{\prod_{\ell=1}^k(1+m\ell x)}=\sum_{k=0}^\infty p_{k+1}^{(m)}\frac{x^{k+1}(1-x)^{k+1}}{\prod_{\ell=1}^{k+1}(1+m\ell x)}\\
=&\sum_{k=0}^\infty\left(\left(\frac{1+m(k+1)x}{1-x}\right)S_k(x)-S_{k+1}(x)\right)\frac{x(1-x)^{k+1}}{\prod_{\ell=1}^{k+1}(1+m\ell x)}\\
=&\frac{1+mx}{1-x}\frac{x(1-x)}{(1+mx)}S_0(x)+\sum_{k=1}^\infty S_k(x)\left(\frac{1+m(k+1)x}{1-x}\frac{x(1-x)^{k+1}}{\prod_{\ell=1}^{k+1}(1+m\ell x)}-\frac{x(1-x)^k}{\prod_{\ell=1}^{k}(1+m\ell x)}\right)\\
=&xS_0(x)=\sum_{i=1}^\infty p_{i,i}x^i=\frac{x}{1-x},
\end{align*}
as claimed.
\end{proof}

Note that setting $m=1$ finishes the proof of Theorem \ref{312}, and shows that both the sequence $\pk_n(312)$ of size $n$ parking functions whose parking permutations avoid the pattern 312, and the sequence $p_n$ of the number of metasylvester classes of size $n$ parking functions match the sequence A132624 in OEIS. The latter confirms a conjecture of Novelli and Thibon in \cite{NT}. It remains to be seen whether more explicit formulas exist for $p_n^{(m)}$.

We now prove two results on $m$-parking functions. Let $\mathcal{C}_n^{(m)}$ be the set of $m$-Catalan paths of length $(m+1)n$, each consists of $n$ up-steps of size $m$, and $mn$ down-steps of size 1, and never goes below the $x$-axis. Setting $m=1$ recovers the usual Catalan paths. For every increasing $m$-parking function $f$ of size $n$ and for every $j\in[mn]$ in increasing order, draw $t$ up-steps of size $m$ followed by a down-step of size 1 if $|f^{-1}(j)|=t$. It is easy to see using the definition that this gives a bijective correspondence between increasing $m$-parking functions of size $n$ and the set $\mathcal{C}_n^{(m)}$.

By \cite{NT}, the number of hyposylvester classes of $m$-parking functions of size $n$ is $\frac1{2mn+1}\binom{(2m+1)n}{n}$. In the following theorem, analogous to results above, we express the number of metasylvester classes of $m$-parking functions of size $n$ as a sum over $\mathcal{C}_n^{(m)}$ of terms involving entries in $\mathbf{u}(C)$, which is defined in the same way as before for ordinary Catalan paths. Unfortunately, the method of deleting the first peak that we used in the proof of Theorem \ref{metasylvestermultipark} does not translate well to the setting of $m$-Catalan paths, and we could not obtain more explicit expressions. We thank Jared León for his programming help for the following table of values. 

\begin{center}
\begin{tabular}{|c|c|c|}
\hline  \multirow{2}{*}{$m$}   & metasylvester classes of & \multirow{2}{*}{OEIS}\\
   & $m$-parking functions of size $n$
, $1\leq n\leq8$ & \\\hline
  1   & 1, 3, 14, 87, 669, 6098, 64050, 759817 & A132624\\\hline
  2   & 1, 5, 45, 585, 9944, 208783, 5218212, 151283473 & A243678\\\hline
  3   & 1, 7, 94, 1879, 50006, 1663866, 66483078, 3101878511 & A243679\\\hline
  4   & 1, 9, 161, 4353, 158035, 7212505, 396783811, 25558807077 & A243682\\\hline
  5   & 	1, 11, 246, 8391, 386211, 22414326, 1571290734, 129166342089 & A243683\\\hline
\end{tabular}
\end{center}
\begin{theorem}\label{metasylvestermpark}
The number of metasylvester classes of $m$-parking functions of size $n$ is 
$$\sum_{C\in\mathcal{C}_n^{(m)}}\prod_{i=2}^{|\mathbf{u}(C)|}\left(1+\sum_{j=i}^{|\mathbf{u}(C)|}\mathbf{u}(C)_j\right).$$
\end{theorem}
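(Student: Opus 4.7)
The plan is to mirror almost verbatim the structure of the proof of Theorem \ref{metasylvestermultipark}, substituting ordinary Catalan paths and multiparking functions with $m$-Catalan paths and $m$-parking functions. Two ingredients are needed. The first is the bijection recalled in the paragraph immediately preceding the theorem statement between increasing $m$-parking functions of size $n$ and $m$-Catalan paths in $\mathcal{C}_n^{(m)}$, under which the packed evaluation corresponds exactly to $w(C)$: since each block of consecutive size-$m$ up-steps has length $t = |f^{-1}(j)|$ for some $j$ with $|f^{-1}(j)|>0$, the ordered list of block lengths reads off the nonzero entries of $\ev(f)$, i.e.\ $\pev(f) = w(C)$. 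The second ingredient is the analogue, for $m$-parking functions, of the metasylvester counting statement from \cite{NT} used in the proof of Theorem \ref{metasylvestermultipark}: two $m$-parking functions in the same metasylvester class have the same evaluation, and for each admissible packed evaluation $\beta$ the number of metasylvester classes with that common packed evaluation is $\prod_{i=2}^{|\beta|}\bigl(1+\sum_{j=i}^{|\beta|}\beta_j\bigr)$.

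Granted these, the proof is a one-line bookkeeping: group all $m$-parking functions of size $n$ by their evaluation, and then group evaluations by their common packed evaluation. Every evaluation of an $m$-parking function is a permutation of the evaluation of some (unique) increasing $m$-parking function, so packed evaluations are indexed by $\mathcal{C}_n^{(m)}$ via the bijection recalled above. Summing the per-packed-evaluation count then yields
\[
\#\{\text{metasylvester classes}\} \;=\; \sum_{C\in\mathcal{C}_n^{(m)}}\prod_{i=2}^{|w(C)|}\left(1+\sum_{j=i}^{|w(C)|}w(C)_j\right),
\]
as desired.

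The only real obstacle is locating or adapting the precise statement of the second ingredient in \cite{NT}. The hyposylvester version for $m$-parking functions is recorded explicitly just above the theorem (giving $\frac{1}{2mn}\binom{(2m+1)n}{n}$), which makes it plausible that the metasylvester analogue is either stated or derivable with only cosmetic modifications from the corresponding result for $m$-multiparking functions (Theorem 3.9 in \cite{NT}, invoked in our Theorem \ref{metasylvestermultipark}). If the statement is not given in exactly the form required, one would adapt the argument there by replacing the $m$-multiparking setup with the $m$-parking setup; the dependence of class counts solely on the packed evaluation, and the exact product formula, both follow from the structural description of metasylvester equivalence as identifying functions that differ by local rewritings that preserve the evaluation and the right-to-left reading pattern. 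Unlike in Theorem \ref{metasylvestermultipark}, the ``delete the first peak'' reduction does not cleanly translate to $m$-Catalan paths (since up-steps have size $m>1$ and down-steps have size $1$), which is why the proof stops at the summation over $\mathcal{C}_n^{(m)}$ rather than producing a more explicit recursion or functional equation.
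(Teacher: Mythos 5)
Your proposal matches the paper's proof essentially verbatim: the paper likewise combines the bijection between increasing $m$-parking functions and $\mathcal{C}_n^{(m)}$ (with $\pev(\overline{f})=w(C)$) with the per-packed-evaluation count $\prod_{i=2}^{|\beta|}\bigl(1+\sum_{j=i}^{|\beta|}\beta_j\bigr)$ from Theorem 3.9 of \cite{NT}, and then sums over $m$-Catalan paths. Your hedge about locating the exact statement in \cite{NT} is reasonable, but the paper simply cites Theorem 3.9 there directly for this case as well, so no adaptation beyond what you describe is needed.
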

\begin{proof}
Every $m$-parking function $f$ of size $n$ is the permutation of an increasing $m$-parking function $\overline{f}$, and every increasing $m$-parking function $\overline{f}$ of size $n$ corresponds bijectively to an $m$-Catalan path $C$ of length $(m+1)n$, with $\pev(\overline{f})=\mathbf{u}(C)$. Consequently, similar to the proof of Theorem \ref{metasylvestermultipark} and using Theorem 3.9 in \cite{NT}, we see that the number of metasylvester classes of $m$-multiparking functions of size $n$ is equal to 
$$\sum_{C\in\mathcal{C}_n^{(m)}}\prod_{i=2}^{|\mathbf{u}(C)|}\left(1+\sum_{j=i}^{|\mathbf{u}(C)|}\mathbf{u}(C)_j\right),$$ as required.
\end{proof}

Finally, we give an explicit formula for the number of hypoplactic classes of $m$-parking functions of size $n$, which turns out to coincide with the small $(m+1)$-Schr\"{o}der numbers defined by Yang and Jiang in \cite{YJ}, confirming some conjectures about this family of sequences on OEIS.

\begin{center}
\begin{tabular}{|c|c|c|}
\hline  \multirow{2}{*}{$m$}   & hypoplactic classes of & \multirow{2}{*}{OEIS}\\
   & $m$-parking functions of size $n$
, $1\leq n\leq8$ & \\\hline
%\hline  $m$  & hyposylvester classes of $m$-multiparking functions of size $mn$, $1\leq n\leq8$ & OEIS \\\hline
  1   & 1, 3, 11, 45, 197, 903, 4279, 20793 & A001003\\\hline
  2   & 1, 5, 33, 249, 2033, 17485, 156033, 1431281 & A034015\\\hline
  3   & 1, 7, 67, 741, 8909, 113107, 1492103, 20251945 & A243675=A371398\\\hline
  4   & 1, 9, 113, 1649, 26225, 440985, 7711009, 138792929 & A243676\\\hline
  5   & 1, 11, 171, 3101, 61381, 1285663, 28015735, 628599577 & A243677\\\hline
\end{tabular}
\end{center}
\begin{theorem}\label{hypoplacticmpark}
The number of hypoplactic classes of $m$-parking functions of size $n$ is 
$$\frac1n\sum_{k=1}^n\binom{mn}{k-1}\binom{n}{k}2^{k-1}.$$
\end{theorem}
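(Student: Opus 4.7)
The plan is to mirror the strategy of Theorems \ref{hyposylvestermultipark} and \ref{metasylvestermultipark}. The black-box input from \cite{NT} should be a companion to Theorem 3.9 above, asserting that for every evaluation $\alpha$ of an $m$-parking function, the number of hypoplactic classes containing $m$-parking functions with evaluation $\alpha$ depends only on the packed evaluation $\beta$ and equals $2^{|\beta|-1}$. Combining this with the bijection between increasing $m$-parking functions of size $n$ and $m$-Catalan paths in $\mathcal{C}_n^{(m)}$ (under which $\pev(\overline{f}) = w(C)$), together with the observation that every $m$-parking function is a permutation of an increasing one with the same evaluation, one obtains
$$p_n^{(m)} = \sum_{C \in \mathcal{C}_n^{(m)}} 2^{|w(C)|-1}.$$

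The next step is to analyse this sum via a first-return decomposition of $m$-Catalan paths. Let $f(x, y) = \sum_{n \geq 0} \sum_{C \in \mathcal{C}_n^{(m)}} x^n y^{|w(C)|}$, with the empty path (at $n=0$) contributing $1$ and $|w(\text{empty})| = 0$. Any non-empty $C$ factors uniquely as $C = U\, C_0'\, D\, C_1'\, D \cdots C_{m-1}'\, D\, \hat{C}$, where the leading $U$ is the initial up-step of size $m$, each successive $D$ is the first return down by one unit of height from the corresponding level, and $C_0', C_1', \ldots, C_{m-1}', \hat{C}$ are $m$-Catalan paths. Counting maximal up-step blocks, the leading $U$ merges with the first up-block of $C_0'$ when $C_0'$ is non-empty, but forms its own singleton block when $C_0'$ is empty; all other sub-paths are immediately preceded by a $D$-step, so no merging occurs. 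Summing, with the $C_0'$-contribution $y + (f(x,y) - 1) = f(x,y) + y - 1$ and each other sub-path contributing $f(x, y)$, yields the functional equation $f(x, y) - 1 = x\,(f(x, y) + y - 1)\, f(x, y)^m$.

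Specialising to $y = 2$ and writing $\overline{F}(x) = f(x, 2) - 1 = 2\sum_{n \geq 1} p_n^{(m)} x^n$, the equation becomes $\overline{F}(x) = x\,\phi(\overline{F}(x))$ with $\phi(t) = (t+2)(t+1)^m$, which satisfies $[x^0]\phi(x) = 2 \neq 0$. Lagrange's Implicit Function Theorem (Lemma \ref{lift}) then yields
$$2 p_n^{(m)} = [x^n] \overline{F}(x) = \frac{1}{n}\,[x^{n-1}]\, (x+2)^n (x+1)^{mn}.$$
Expanding $(x+2)^n = \sum_{k} \binom{n}{k} 2^{n-k} x^k$ and $(x+1)^{mn} = \sum_{j} \binom{mn}{j} x^j$, then collecting the coefficient of $x^{n-1}$ and reindexing $k \mapsto n-k$, one obtains $[x^{n-1}](x+2)^n(x+1)^{mn} = \sum_{k=1}^n \binom{n}{k}\binom{mn}{k-1} 2^k$. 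Dividing by $2$ produces the claimed formula.

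The main obstacle will be justifying the first-return decomposition, and in particular the correction term $[C_0' \text{ empty}]$ that produces the $+\,y-1$: one must verify that only $C_0'$ can merge with the leading $U$ since each of the remaining sub-paths is introduced by a $D$-step. A secondary point is to locate (or, if necessary, derive by analogy to \cite[Theorem 3.9]{NT}) the exact statement giving $2^{|\beta|-1}$ hypoplactic classes per packed evaluation; the small-case verifications $p_1^{(m)} = 1$, $p_2^{(m)} = 2m+1$, and $p_3^{(m)} = 6m^2 + 4m + 1$ agree with the claimed formula and provide a useful sanity check.
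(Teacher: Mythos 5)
Your proposal is correct, and the first half (reducing the count to $\sum_{C\in\mathcal{C}_n^{(m)}}2^{|w(C)|-1}$ via the $2^{|\beta|-1}$ statement from \cite{NT} and the bijection between increasing $m$-parking functions and $m$-Catalan paths) coincides with the paper's argument; the relevant black box is Section 2.6 of \cite{NT}, exactly as you anticipated. Where you diverge is in evaluating the sum. The paper observes that $|w(C)|$ equals the number of peaks of $C$ and simply cites \cite{YJ} for the fact that the number of $m$-Catalan paths of length $(m+1)n$ with exactly $k$ peaks is the $m$-Narayana number $N^{(m)}_{n,k}=\frac1n\binom{mn}{k-1}\binom{n}{k}$, so the formula drops out by grouping paths by peak count. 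You instead derive the bivariate functional equation $f(x,y)-1=x\,(f(x,y)+y-1)\,f(x,y)^m$ from the first-return decomposition, specialise to $y=2$, and apply Lemma \ref{lift} with $\phi(t)=(t+2)(t+1)^m$; I checked the bookkeeping of up-step blocks (only $C_0'$ can absorb the leading up-step, since every other factor is introduced by a down-step, whence the $+\,y-1$ correction) and the coefficient extraction, and both are sound, agreeing with the small cases $p_2^{(m)}=2m+1$ and $p_3^{(m)}=6m^2+4m+1$. Your route is longer but self-contained and actually proves the $m$-Narayana refinement (set $y$ arbitrary and apply Lagrange inversion to recover $N^{(m)}_{n,k}$ itself) rather than importing it; the paper's route is shorter at the cost of an external citation. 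Either is acceptable here.
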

\begin{proof}
By Section 2.6 in \cite{NT}, the number of hypoplactic classes containing $m$-parking functions with common evaluation $\alpha$ depends only on their common packed evaluation $\beta$, and is equal to $2^{|\beta|-1}$. As a result, similar to the proof of Theorem \ref{metasylvestermpark} and using the bijection between increasing $m$-parking functions and $m$-Catalan paths, the number of hypoplactic classes of $m$-parking functions of size $n$ is equal to 
$$\sum_{C\in\mathcal{C}_n^{(m)}}2^{|\mathbf{u}(C)|-1}.$$

Note that for every $C\in\mathcal{C}_n^{(m)}$, $|\mathbf{u}(C)|$ is exactly the number of peaks in $C$. Since the number of $m$-Catalan paths with length $(m+1)n$ and exactly $k$ peaks is known \cite{YJ} to be the $m$-Narayana numbers $N^{(m)}_{n,k}=\frac1n\binom{mn}{k-1}\binom{n}{k}$, it follows that the number of hypoplactic classes of $m$-parking functions of size $n$ is equal to 
$$\frac1n\sum_{k=1}^n\binom{mn}{k-1}\binom{n}{k}2^{k-1},$$
by splitting the sum over $m$-Catalan paths according to the number of peaks.
\end{proof}

\bibliographystyle{abbrv}
\bibliography{bibliography}
\end{document}